 \definecolor{refkey}{gray}{0.8}
 \definecolor{labelkey}{gray}{0.8}
\newtheorem{Theorem}{Theorem}[section]
\newtheorem{TheoremA}{Theorem}
\newtheorem{CorollaryA}{Corollary}
\newtheorem{Lemma}[Theorem]{Lemma}
\newtheorem{Proposition}[Theorem]{Proposition}
\newtheorem{remark}[Theorem]{Remark}
\newtheorem{claim}{Claim}
\newtheorem{definition}[Theorem]{Definition}
\newtheorem{assumptionA}{Assumption}
 \definecolor{darkgreen}{rgb}{0,0.7,0}
\definecolor{yac}{rgb}{0.0, 0.44, 1.0}
\definecolor{light}{gray}{.9}
\newcommand{\verde}{\textcolor{black}}
\newcommand{\cB}{\ensuremath{\mathcal B}}
\newcommand{\cC}{\ensuremath{\mathcal C}}
\newcommand{\cD}{\ensuremath{\mathcal D}}
\newcommand{\cE}{\ensuremath{\mathcal E}}
\newcommand{\cF}{\ensuremath{\mathcal F}}
\newcommand{\cG}{\ensuremath{\mathcal G}}
\newcommand{\cH}{\ensuremath{\mathcal H}}
\newcommand{\cL}{\ensuremath{\mathcal L}}
\newcommand{\cM}{\ensuremath{\mathcal M}}
\newcommand{\cN}{\ensuremath{\mathcal N}}
\newcommand{\cO}{\ensuremath{\mathcal O}}
\newcommand{\cP}{\ensuremath{\mathcal P}}
\newcommand{\cS}{\ensuremath{\mathcal S}}
\newcommand{\cT}{\ensuremath{\mathcal T}}
\newcommand{\cU}{\ensuremath{\mathcal U}}
\newcommand{\bbE}{{\ensuremath{\mathbb E}} }
\newcommand{\bbL}{{\ensuremath{\mathbb L}} }
\newcommand{\bbN}{{\ensuremath{\mathbb N}} }
\newcommand{\bbP}{{\ensuremath{\mathbb P}} }
\newcommand{\bbR}{{\ensuremath{\mathbb R}} }
\newcommand{\bbZ}{{\ensuremath{\mathbb Z}} }
\newcommand{\var}{\operatorname{Var}}
\let\a=\alpha \let\b=\beta   \let\d=\delta  \let\e=\varepsilon
 \let\g=\gamma     \let\k=\kappa  \let\l=\lambda
  \let\s=\sigma \let\t=\tau   
  \let\z=\zeta
\let\D=\Delta   \let\G=\Gamma  \let\L=\Lambda 
\let\O=\Omega      
\begin{document}

\title[Random walks in dynamic random environments via $L^2$--perturbations]{Analysis of random walks in dynamic random environments via $L^2$--perturbations}

\author{L. Avena$^1$}
\address{$^1$Mathematisch instituut
Universiteit Leiden. Postbus 9512
2300 RA Leiden,
The Netherlands. Supported by NWO Gravitation Grant 024.002.003-NETWORKS}
\email{l.avena@math.leidenuniv.nl}
\author{O. Blondel$^2$}
\address{$^2$CNRS, Univ Lyon, Universit\'e Claude Bernard Lyon 1, ICJ, CNRS UMR 5208; 
43 blvd. du 11 novembre 1918
F-69622 Villeurbanne cedex,
France}
\email{blondel@math.univ-lyon1.fr}
\author{A. Faggionato$^3$}
\address{$^3$Dipartimento di Matematica, Universit\`a di Roma La Sapienza.
  P.le Aldo Moro 2, 00185 Roma, Italy}
\email{faggiona@mat.uniroma1.it}

\begin{abstract} 

{We consider random walks in dynamic random environments given by Markovian dynamics {on $\mathbb{Z}^d$}.{ We assume that the environment has a stationary distribution $\mu$ and
satisfies the Poincar\'e inequality w.r.t. $\mu$. The random walk is a perturbation of another  random walk (called
``unperturbed''). We assume that also the environment viewed from the
unperturbed random  walk has stationary distribution $\mu$.} Both perturbed and unperturbed random walks can depend heavily on the environment {and are not assumed to be finite--range}. We derive a law of large numbers, an averaged invariance principle for the position of the walker and a series expansion for the asymptotic speed. We also provide a condition for non-degeneracy of the diffusion, and describe in some details equilibrium and convergence properties of the environment seen by the walker.}
{All these results are based on a more general perturbative analysis of operators that we derive in the context of {$L^2$}--bounded perturbations of Markov processes by means of the so--called Dyson--Phillips expansion.}

\smallskip
\noindent {\em Keywords}: perturbations of  Markov processes, Poincar\'e inequality,  Dyson--Phillips expansion, random walk in dynamic random environment, asymptotic velocity, invariance principle.

\smallskip

\noindent{\em MSC 2010}: 60K37, 60F17, 82C22
\end{abstract}
\maketitle



\section{Introduction} 
{ Random motion in random media has been the subject of intensive studies in the physics and mathematics literature over  the last decades. The main motivation to our work  is the analysis of    rather general continuous--time Random Walks (RWs) on $\bbZ^d$, whose transition rates are given as a function of an underlying (autonomous) Markov process playing the role of a dynamic random environment.


 A number of results (as LLN, CLT, large deviation estimates) have been obtained in the past under various conditions that allow some control on the strong dependence between the trajectories  of the random walk and the environment. We mention space and/or time independence assumptions on the environment (see e.g. \cite{BMP} for quenched CLT of perturbation of simple random walks using cluster expansion, \cite{BK} for diffusive bounds by using renormalization techniques, \cite{RAS2} for quenched invariance principles by analyzing the environment as seen by the walk, \cite{BZ} for a law of large numbers and a high-dimensional quenched invariance principle by constructing regeneration times) and  balanced conditions (cf. \cite{DFGW} for averaged invariance principles under reversibility of the environment as seen by the walker and \cite{DGR} for a quenched invariance principle for balanced random walks). 
When allowing non--trivial space-time correlation structures, in \cite{AdHR}
 for some   uniformly elliptic walks and in  \cite{HSS} for   non--elliptic ones, laws of large numbers via regeneration times have been established by assuming mixing conditions on the environment that are uniform on the initial configuration (i.e. adaptation of \emph{cone-mixing} conditions borrowed from \cite{CZ} for static random environments).
In a similar setting, a quenched CLT has been established in \cite{DKL}, and a quite general asymptotic analysis has been pursued in the recent \cite{RV}, again by using a uniform mixing condition expressed in terms of a coupling.  When dealing with poorly--mixing environments, some progress has been recently achieved by using highly model dependent techniques {\cite{ASV,H+,HS15}}.

In this work, we require that the environment satisfies an \emph{exponential $L^2$--mixing} hypothesis (namely, the Poincar\'e inequality w.r.t. an invariant distribution $\mu$) and that the random walk is ``close to nice'', in the sense that it is a perturbation of a random walk  such that  $\mu$ is an invariant distribution for the environment viewed by the walker.
 We stress that even though we are in a perturbative setting, the reference unperturbed random walk is allowed to depend strongly on the environment. Moreover, unlike most of the references above, we do not require finite range for the jumps of the walk. 
{As discussed in  Section \ref{applications}},   we establish several results for the RW and for the environment seen from it. For the latter, we show that there exists a unique invariant distribution absolutely continuous w.r.t. $\mu$, 
we analyze convergence to this  invariant measure and  ergodicity, we derive  an expansion of its density w.r.t.\@ $\mu$ and show that the effect of the perturbation on  the  density is sharply localized  around the origin, and we derive an exponential $L^2$--mixing property similar to the Poincar\'e inequality  (see Theorems~\ref{late},~\ref{porte-bonheur},~\ref{canicule}). For the random walk itself, we prove a LLN and an averaged invariance principle, as well as the non-degeneracy of the diffusion matrix under suitable conditions (see Theorems~\ref{late},~\ref{QCLT}).

One of the  basic tools for the above  results is the so--called Dyson--Phillips expansion, which we use to derive a series expansion for the semigroup of the environment seen from the walker. This perturbative analysis is very general, and indeed {in Section \ref{viva_segovia}}  it is carried on for a generic Markov process stationary w.r.t. some invariant and ergodic  distribution $\mu$ and satisfying the Poincar\'e inequality. We assume that  the generator of
the  perturbed Markov process is  (roughly speaking)  obtained by a $L^2(\mu)$--bounded   perturbation  of the generator of the original, unperturbed, Markov process. In Theorem \ref{teo_invariante} we prove that the perturbed process admits a unique invariant distribution absolutely continuous w.r.t. $\mu$ (which is also ergodic), write a series expansion for its density w.r.t. $\mu$ and for the perturbed  Markov semigroup,  
and estimate the convergence to equilibrium for the latter. 
In addition, in  Corollary \ref{lacho_drom} and Proposition \ref{gelem_gelem}, we state  a law of large numbers and an invariance principle for additive functionals of the perturbed Markov process, respectively.

Let us further comment on some   closely related works with perturbative techniques. In \cite{AdHR} the Dyson--Phillips expansion has also been used in a similar fashion in one of the main results therein, but the authors only focus on the law of large numbers for the walk 	and work  under the more restrictive sup--norm instead of the $L^2$--norm. In \cite{KO} the authors work with hypotheses very similar to our own for Theorem \ref{teo_invariante} (even allowing more general perturbations), but the obtained results present some differences.  In particular, in \cite{KO} the uniqueness of the invariant distribution for the perturbed process is proved inside the 
smaller class of distributions whose  density w.r.t. $\mu$ is bounded in $L^2(\mu)$. In  addition, in Theorem \ref{teo_invariante} we derive  information on the exponential  convergence  of the perturbed  semigroup (which is relevant to get the invariance principle in  Proposition \ref{gelem_gelem}), while in \cite{KO}  the exponential  convergence of the perturbed densities is derived. For more detailed comments on the relation between \cite{KO} and our Theorem \ref{teo_invariante}  {we refer to 
  Remark \ref{KOmark}}.
  We point out that the  main goal  in \cite{KO}   is to establish the Einstein relation for the speed of the walker, hence we have not focused on this issue since already treated there.  Finally,  we mention  the recent work \cite{ramirez}, where the author considers perturbations of infinite dimensional diffusions with known invariant measure (not necessarily reversible), satisfying the log-Sobolev inequality (which is stronger than the Poincar\'e inequality).  The invariant measure for the perturbed process is analyzed and its density is  expressed in terms of a series expansion similar to \eqref{muinfinity_bis}, \eqref{expansiong} below.

Finally, we mention that the results we present herein can be pushed to obtain more detailed information when dealing with explicit examples of random walks in dynamic random environments. This path has been pursued in \cite{ABF}, where we consider one-dimensional examples in which the dynamic environments are given by kinetically constrained models.

\medskip

\noindent
{{\bf Outline of the paper}}.
 {In Section \ref{applications} we present our main results concerning random walks in dynamic random environments, i.e. 
Theorems \ref{late},  \ref{porte-bonheur}, \ref{canicule} and \ref{QCLT}. 
The main results concerning perturbations of {more general} Markov processes, i.e. Theorem \ref{teo_invariante},  Corollary \ref{lacho_drom} and Proposition \ref{gelem_gelem}, are stated in Section \ref{viva_segovia}. The other sections, from  \ref{golden_gate}  to \ref{NonDeg}, are devoted to the proofs of the above statements. 
{In particular, in Section \ref{san_valentino} we
present a coupling construction allowing to compare perturbed and unperturbed walkers which is independent of the small
perturbation assumption.}
Finally, in Appendix \ref{app_misc} we derive some simple but useful analytic results}.




\section{Random walks  in dynamic random environment}\label{applications}


{In this section we start with    a  stochastic  process  $(\s_t)_{t\geq 0}$, called \emph{dynamic random environment},  with  state space   $\O:= S^{\bbZ^d}$, $S$ being a compact Polish space. We assume it has    c\`adl\`ag paths in the   Skohorod space  $ D[\bbR_+; \O) $.
We will  then  introduce two random walks  $(X_t)_{t \geq 0}$ and $(X_t^{(\e)})_{t \geq 0}$,  on $\bbZ^d$,  whose jump rates depend on the dynamic  environment.
   The random walk  $(X_t^{(\e)})_{t \geq 0}$ will be thought of as a perturbation of  $(X_t)_{t \geq 0}$ and the parameter $\e$ will  quantify  the perturbation. More precisely, we give conditions in terms of Markov generators {ensuring} that the  process ``environment viewed from the walker $X^{(\e)}_t$'' (i.e. $\t_{X^{(\e)}_t} \s_t$) is a perturbation of the process  ``environment viewed from the walker $X_t$'' (i.e. $\t_{X_t} \s_t$).  In the above notation,     $\tau_x$ denotes   the translation  operator on $\O$ such that $\tau_x\eta(y)=\eta(x+y)$ for $x,y\in\bbZ^d, \eta\in\O$.}

{
 In Subsection \ref{torta1} we  introduce the main mathematical objects under  investigation and  our assumption. In Subsection \ref{torta3} we  present our main results concerning random walks in dynamic random environments, while  in Subsection \ref{torta2} we  discuss examples and collect some comments. 
}

\subsection{{Processes  and assumptions}}\label{torta1}
 \begin{assumptionA} \label{ass:base1}The dynamic random environment is a {Feller} process and is stationary w.r.t.\@ a { probability measure $\mu$ on $\O$.  Moreover, $\mu$ is translation invariant.}
 \end{assumptionA}
{We denote by  $\bigl(S_{\rm env}(t)\bigr)_{t \geq 0 }$ the Markov semigroup in $L^2(\mu)$ associated  {with} the dynamic random environment, and by   $L_{\rm env}: \cD( L_{\rm env} ) \subset L^2(\mu) \to L^2(\mu)$ 
  the corresponding generator.    In particular, given $f \in L^2(\mu)$, it holds $(S_{\rm env}(t) f) (\s) :=  \bbE^{\rm  env}_\s\bigl[ f(\s_t) \bigr] $ $\mu$--a.s., where  $\bbE^{\rm env}_\s$ is the expectation 
  for the dynamic random environment starting at $\s$.
  }
  

\begin{assumptionA}\label{ass:base2}The dynamic random environment commutes with translations, i.e.\
\begin{equation}\label{retour}
S_{\rm env}(t)(f\circ\tau_x)=(S_{\rm env}(t)f)\circ\tau_x, \qquad \forall f \in L^2(\mu)\,,\; t \geq 0\,.
\end{equation}
{Moreover, the generator  $L_{\rm env}$   satisfies the Poincar\'e inequality, i.e.\ there exists $\g>0$ such that 
 \begin{equation}\label{poincareXX}
\g \|f\|^2 \leq - \mu ( f L_{\rm env} f ) \qquad \forall f \in \cD (L) \text{ with } \mu(f)=0\,.
\end{equation}}
\end{assumptionA} 
{We point out that \eqref{poincareXX} is equivalent to the bound  
$ \| S _{\rm env} (t)  f - \mu(f) \|\leq e^{-\g t} \|f -\mu(f) \|$ for all $ t \geq 0$ and $ f \in L^2(\mu)$, $\| \cdot \|$ being the norm in $L^2(\mu)$ (see Lemma \ref{candelina} in Appendix \ref{app_misc}).}

\smallskip

We now want to introduce two  random walks on $\bbZ^d$,  whose jump rates depend on the dynamic random environment. To this aim,  we require the following:

\begin{assumptionA}\label{ass:rates}
There are given continuous functions   $r_\e(y,\cdot)$, $r(y,\cdot)$  and  $\hat{r}_{\e}(y,\cdot) $ on $\O$, parametrized by   $y\in\bbZ^d$.   These functions are zero {for} $y=0$,  $r_\e(y,\cdot)$ and $r(y,\cdot)$ are nonnegative and 
 $r_\e(y,\cdot)$ can be decomposed as 
\begin{equation}\label{rateDec}
r_\e(y,\cdot):=r(y,\cdot)+\hat{r}_{\e}(y,\cdot)\,.
\end{equation}
We also require that, for some $n\geq 1$,   the above functions have finite $n$-th moment:
\begin{equation}
\label{nthMoment}
\sum_{y\in\bbZ^d}|y|^n\sup_{\eta\in\O}    r (y, \eta)   <\infty\,, \qquad \qquad 
\sum_{y\in\bbZ^d}|y|^n\sup_{\eta\in\O}   | \hat{r}_\e (y, \eta) |  <\infty\,.
\end{equation}
\end{assumptionA}
\medskip

Let now $(X_t)_{t\geq 0}$  be the continuous time  random walk on $\bbZ^d$ jumping from 
site $x\in \bbZ^d$ to site $x+y\in\bbZ^d$ at rate $r(y,\tau_{x}\eta)$, given that the dynamic random environment is in state $\eta \in \O$.  
 Due to dependence on the environment, such a random walk is not Markovian itself, but 
the joint process  $(\s_t, X_t)_{t\geq 0}$ on state space $\O\times \bbZ^d$ is a Markov process with formal  generator\footnote{The notation $L_{\rm rwre}$  is thought to stress that we are referring to the joint process describing both the random walk and the random environment. 
}
\begin{equation}\begin{aligned}\label{uRWRE}
L_{\rm rwre} f(\eta,x) &:= L_{\rm env} f(.,x)(\eta)+ \sum_{y \in \bbZ^d}r(y,\tau_{x}\eta) \big[f(\eta,x+y)-f(\eta, x)\big],\;\; (\eta,x ) \in \O \times \bbZ^d\,.
\end{aligned}\end{equation} 
We do not insist here with a precise description of the generator, since it will not be used in the sequel.  On the other hand, below  we will discuss carefully the generator of the process ``environment viewed from the walker''.
{Due to \eqref{nthMoment},  no explosion takes place and therefore}
   the random walk $(X_t)_{t\geq 0}$ is well defined (a universal construction is given in Section  \ref{san_valentino}). In what follows we write $P_{\eta,x}$ for the law on the c\`adl\`ag space $D(\bbR_+; \O\times\bbZ^d)$ of this joint process starting at $(\eta,x)$. 

\medskip

As in the construction of the joint Markov process in \eqref{uRWRE}, we define a new joint Markov process
 $(\s_t, X^{(\e)}_t)_{t\geq 0}$  on state space $\O\times \bbZ^d$ with formal generator:
  
\begin{equation}\begin{aligned}\label{RWRE}
L_{\rm rwre}^{(\e)} f(\eta,x) &:= L_{\rm env} f(.,x)(\eta)+ \sum_{y \in \bbZ^d}r_\e(y,\tau_{x}\eta) \big[f(\eta,x+y)-f(\eta, x)\big],\;\; (\eta,x ) \in \O \times \bbZ^d\,.
\end{aligned}\end{equation} 
In what follows we write $P^{(\e)}_{\eta,x}$ for the law on the c\`adl\`ag space $D(\bbR_+; \O\times\bbZ^d)$ of this joint process starting at $(\eta,x)$. 
We refer to this new walker $(X^{(\e)}_t)_{t\geq 0}$ as the \emph{perturbed walker}.

One of the most common {approaches} to study random motion in random media is to analyze the so called {\em environment seen by the walker}. In our case, we are interested {in} the Markov processes on $\O$ given by $\tau_{X_t}\s_t$ and $\tau_{X^{(\e)}_t}\s_t$, where $(\s_t, X_t)_{t \geq 0}$ and $(\s_t, X^{(\e)}_t)_{t \geq 0}$ are the joint Markov processes defined above.

{We write  $C(\O)$ for the space of real continuous functions on $\O$ endowed with the uniform norm. Since, by assumption,  the dynamic random environment is a Feller process, it has a well defined Markov semigroup on $C(\O)$, and we  denote by\footnote{{We denote consistently with curved $\cL$ generators on $C(\O)$ and with straight $L$ their version living in $L^2(\mu)$.}} $ \cL_{\rm env} : \cD( \cL_{\rm env} ) \subset C(\O ) \to C(\O)$ the  associated Markov generator}. \ We define  $\cL_{\rm jump}f (\eta) =  \sum_{y \in \bbZ^d}r(y,\eta)\big[f(\tau_{y}\eta)-f(\eta)\big]$ for $f \in C(\O)$ and $\hat{\cL}_\e f (\eta) =  \sum_{y \in \bbZ^d}\hat r_\e(y,\eta)\big[f(\tau_{y}\eta)-f(\eta)\big]$ for $f \in C(\O)$.
 Then, by Assumption \ref{ass:rates}, the operators  $\cL_{\rm jump},\hat{\cL}_\e : C(\O) \to C(\O)$  are well posed and bounded.
  
 \begin{assumptionA}\label{ass:feller}
 The environment seen from the unperturbed walker  $\bigl(\tau_{X_t}\s_t\bigr)_{t \geq 0}$ and the one seen from the perturbed walker $\bigl( \tau_{X^{(\e)}_t}\s_t\bigr)_{t \geq 0}$ are Feller  processes on $\O$ with generators on $C(\O)$  given respectively  by 
 $ \cL_{\rm env}+ \cL_{\rm jump}$ and  $\cL_{\rm env}+ \cL_{\rm jump}+\hat{\cL}^{(\e)}$, both  having domain $ \cD ( \cL_{\rm env})$.
  \end{assumptionA}
  The above assumption is typically satisfied in all common applications:
  \begin{Proposition}\label{ale_giacomo}
 Suppose  that $\cL_{\rm env}$ is the closure of a Markov pregenerator $\bbL$  as in \cite[Def.2.1, Chp.I]{L}, satisfying the
criterion in \cite[Prop.2.2, Chp.I]{L}.  Then $ \bbL+ \cL_{\rm jump}$ and $\bbL+ \cL_{\rm jump}+ \hat{\cL}^{(\e)}$ are Markov pregenerators,  whose closures  are Markov generators  of  Feller processes (cf.\@ \cite[Def.2.7, Chp.I]{L}). The resulting Markov  generators  are given respectively  by the operators $ \cL_{\rm env}+ \cL_{\rm jump}$ and  $\cL_{\rm env}+ \cL_{\rm jump}+\hat{\cL}^{(\e)}$, both having domain $ \cD ( \cL_{\rm env})$.
  \end{Proposition}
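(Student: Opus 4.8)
The plan is to use the standard perturbation theory for Markov pregenerators developed in Liggett's book, applied twice: first to add the bounded operator $\cL_{\rm jump}$ to $\cL_{\rm env}$, and then to add $\hat{\cL}^{(\e)}$ to $\cL_{\rm env}+\cL_{\rm jump}$. The key observation is that $\cL_{\rm jump}$ and $\hat{\cL}^{(\e)}$ are bounded operators on $C(\O)$ (by Assumption \ref{ass:rates}, via the $n$-th moment condition \eqref{nthMoment} with $n\ge 1$, which in particular gives $\sum_y \sup_\eta r(y,\eta)<\infty$ and similarly for $\hat r_\e$) which map constants to $0$ and are ``of positive type off the diagonal'', i.e. for $f$ with a nonnegative minimum attained at $\eta_0$ one has $\cL_{\rm jump}f(\eta_0)\ge 0$, and likewise $\cL_{\rm jump}\mathds{1}=0$. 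These are exactly the conditions under which a bounded perturbation of a Markov pregenerator remains a Markov pregenerator, and under which the closure of the sum equals the sum of the closure with the bounded operator, on the same domain.

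First I would recall the definition of Markov pregenerator from \cite[Def.~2.1, Chp.~I]{L}: $\bbL$ is densely defined with $\mathds{1}\in\cD(\bbL)$, $\bbL\mathds{1}=0$, it satisfies the positive maximum principle, and $\mathcal{R}(I-\lambda \bbL)$ is dense for small $\lambda>0$. By hypothesis $\cL_{\rm env}=\overline{\bbL}$ and the Hille–Yosida criterion \cite[Prop.~2.2, Chp.~I]{L} holds, so $\mathcal{R}(I-\lambda\bbL)$ is in fact dense for all $\lambda>0$ small and $\cL_{\rm env}$ generates a Feller semigroup. Then I would verify that $\bbL+\cL_{\rm jump}$ (with domain $\cD(\bbL)$) is again a Markov pregenerator: density of the domain and $(\bbL+\cL_{\rm jump})\mathds{1}=0$ are immediate; the positive maximum principle follows because at a point $\eta_0$ where $f$ attains a nonnegative maximum we have $\bbL f(\eta_0)\le 0$ and $\cL_{\rm jump}f(\eta_0)=\sum_y r(y,\eta_0)[f(\tau_y\eta_0)-f(\eta_0)]\le 0$ since each bracket is $\le 0$ and $r\ge 0$. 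For the range condition I would invoke the standard fact (Liggett, Chp.~I, around Thm.~2.9 / bounded perturbation argument) that adding a bounded dissipative-type operator that also maps constants to zero preserves the pregenerator property, with $\mathcal{R}(I-\lambda(\bbL+\cL_{\rm jump}))$ dense for $\lambda$ small; alternatively, since $\cL_{\rm jump}$ is bounded, the perturbed resolvent can be constructed by a Neumann series once $\lambda\|\cL_{\rm jump}\|<1$. Either way, by \cite[Prop.~2.2, Chp.~I]{L} and the bounded-perturbation theorem for Feller semigroups, the closure $\overline{\bbL+\cL_{\rm jump}}$ is the generator of a Feller (Markov) process; and because $\cL_{\rm jump}$ is bounded, $\overline{\bbL+\cL_{\rm jump}}=\overline{\bbL}+\cL_{\rm jump}=\cL_{\rm env}+\cL_{\rm jump}$ with domain $\cD(\overline{\bbL})=\cD(\cL_{\rm env})$ — this is the standard identity that the closure of (closable operator $+$ bounded operator) equals (closure $+$ bounded operator) on the same domain.

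Next I would repeat the identical argument with $\cL_{\rm env}$ replaced by $\cL_{\rm env}+\cL_{\rm jump}$ and the bounded perturbation $\cL_{\rm jump}$ replaced by $\hat{\cL}^{(\e)}$. One subtlety is that $\hat{\cL}^{(\e)}$ is \emph{not} of positive type off the diagonal (the rates $\hat r_\e(y,\cdot)$ can be negative, since $\hat r_\e=r_\e-r$), so I should not work with the decomposition $\cL_{\rm env}+\cL_{\rm jump}+\hat{\cL}^{(\e)}$ directly when checking the positive maximum principle. Instead I would regroup: $\cL_{\rm env}+\cL_{\rm jump}+\hat{\cL}^{(\e)}=\cL_{\rm env}+\cL_{\rm jump}^{(\e)}$ where $\cL_{\rm jump}^{(\e)}f(\eta)=\sum_y r_\e(y,\eta)[f(\tau_y\eta)-f(\eta)]$ uses the \emph{nonnegative} rates $r_\e(y,\cdot)=r(y,\cdot)+\hat r_\e(y,\cdot)\ge 0$. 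Thus $\bbL+\cL_{\rm jump}^{(\e)}$ is handled exactly as before (nonnegative rates with finite first moment by \eqref{nthMoment}, so $\cL_{\rm jump}^{(\e)}$ is bounded and of positive type off the diagonal, annihilates constants), and its closure is $\cL_{\rm env}+\cL_{\rm jump}^{(\e)}=\cL_{\rm env}+\cL_{\rm jump}+\hat{\cL}^{(\e)}$ on domain $\cD(\cL_{\rm env})$.

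The main obstacle — or rather the only point needing care — is the range/density condition $\mathcal{R}(I-\lambda(\bbL+\cL_{\rm jump}))$ dense (respectively for the $\e$-perturbed pregenerator): I would either quote the precise bounded-perturbation statement in Liggett Chapter I, or argue via the Neumann series $ (I-\lambda\bbL-\lambda\cL_{\rm jump})^{-1}=(I-\lambda\bbL)^{-1}\sum_{k\ge 0}(\lambda\cL_{\rm jump}(I-\lambda\bbL)^{-1})^k$ valid for $\lambda<\|\cL_{\rm jump}\|^{-1}$, which shows $I-\lambda(\bbL+\cL_{\rm jump})$ is onto a dense set for such $\lambda$ since $I-\lambda\bbL$ already is. Everything else (density of $\cD(\bbL)$, annihilation of constants, positive maximum principle, the closure identity) is routine. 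Finally, the identification with the environment-seen-from-the-walker processes: uniqueness of the Markov process with a given Feller generator gives that $\overline{\bbL+\cL_{\rm jump}}$ generates the semigroup of $(\tau_{X_t}\s_t)_{t\ge 0}$ and $\overline{\bbL+\cL_{\rm jump}^{(\e)}}$ that of $(\tau_{X^{(\e)}_t}\s_t)_{t\ge 0}$ — one checks the generator acts as stated on nice cylinder functions, which is a direct computation from the jump rates of the joint processes \eqref{uRWRE}, \eqref{RWRE} together with the translation-covariance of $\cL_{\rm env}$.
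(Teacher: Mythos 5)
Your argument is correct and is the standard bounded-perturbation argument for Markov pregenerators that the paper itself does not spell out (it only notes the proof is similar to that of \cite[Lemma 2.1]{DF} and defers the details to \cite[Appendix A]{ABFv2}). In particular you correctly isolate the one genuinely delicate point — that $\hat{\cL}^{(\e)}$ alone need not satisfy the minimum principle of \cite[Prop.2.2, Chp.I]{L} because $\hat r_\e$ may be negative, so one must regroup the perturbed operator as a jump operator with the nonnegative rates $r_\e = r + \hat r_\e$ — and your Neumann-series verification of the range condition for the closure, together with the identity $\overline{\bbL + B} = \overline{\bbL} + B$ on $\cD(\overline{\bbL})$ for bounded $B$, completes the proof.
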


The proof of the above proposition is similar to the proof of  \cite[Lemma 2.1]{DF}. The interested reader can find the proof of  Prop. \ref{ale_giacomo} in \cite[Appendix A]{ABFv2}.

\begin{assumptionA}\label{ass:stat}
The environment seen by the  unperturbed walker $\bigl(\tau_{X_t}\s_t\bigr)_{t \geq 0}$ has invariant distribution $\mu$.
\end{assumptionA}
\begin{remark}
Due to Assumption \ref{ass:base1}, Assumption \ref{ass:stat} is equivalent to the fact that   $\mu( \cL_{\rm jump} f)=0$ for any $f \in C(\O)$ (or for any $f$  in a dense subset of $C(\O)$, since  {$ \cL_{\rm jump}$ is a bounded operator due to \eqref{nthMoment}}).
\end{remark}

We can state our last main assumption, which is indeed related to the perturbative approach. Consider the operator $\hat L_\e: L^2(\mu) \to L^2(\mu)$ defined  as
\begin{equation} \label{RateTurbati}
 \hat L_\e f(\eta) :=\sum_{y \in \bbZ^d}\hat{r}_{\e}(y,\eta)\left[f(\tau_y\eta)-f(\eta)\right]\,,\qquad f \in L^2(\mu)\,.
\end{equation}
It is indeed  a  bounded operator in $L^2(\mu)$.  For example,  by Schwarz inequality and by  Assumption \ref{ass:rates},  given $f \in L^2(\mu)$
 we can write $$ \mu\left( \bigl[ \hat L_\e f \bigr]^2\right) \leq   \Big[\sum _{y\in \bbZ^d} \sup _\eta |\hat r_ \e (y, \eta) | \Big] \sum _{y \in \bbZ^d } \sup_{\eta} |\hat r_\e (y, \eta) | \mu ( [ f( \t_y \cdot )-f]^2)\,,
$$ and by the translation invariance of $\mu$  we conclude that
\begin{equation}\label{epifania}
\| \hat L_\e \|\leq {2}  \sum _y \sup _\eta |\hat r_ \e (y, \eta) | \,.
\end{equation}

\begin{assumptionA}\label{asso3}
The operator $\hat L_\e$ has norm $\e:= \|\hat L_\e\|$  satisfying $ \e < \g$, where $\g$ has been introduced in Assumption \ref{ass:base2}.
\end{assumptionA}


\subsection{{Some examples}}\label{torta2}
{\emph{Dynamic environments.}
Natural examples of environments satisfying our assumptions are given by Interacting Particle Systems (IPSs) with state space $\Omega=\{0,1\}^{\bbZ^d}$.
A first class of such IPSs is that of translation invariant \emph{stochastic Ising models} in a ``high--{temperature}'' regime (see \cite[Thm.4.1]{Ma} and \cite[Thm.4.1, Chp.I]{L}), among which, the simplest case is the independent spin-flip dynamics. The latter is the Markov process with generator $\mathcal{ L}_{\rm env}f(\sigma)=\gamma\sum_{x\in\bbZ}f(\sigma^x)-f(\sigma)$, where $\gamma>0$, and $\sigma^x$ is the configuration obtained by $\sigma\in\Omega$ by flipping the spin at $x$.
As a variant of these processes, one could consider some Kawasaki dynamics superposed to a high--noise spin--flip dynamics.} {When the exponential convergence of the Markov semigroup holds in the stronger $L^\infty$--norm one could also apply \cite[Sec. 3]{AdHR} to derive some of the results presented here (as the existence of the limiting velocity). On the other hand, 
  several of our results have not been derived in the existing literature, even under the assumption of $L^\infty$--convergence; moreover, there are several models where the Poincar\'e inequality holds while the log-Sobolev inequality is violated  or has not been proved.}
{One of the motivations} which prompted the present study {is to consider the class of so--called} Kinetically Constrained Spin Models (KCSMs), for which \eqref{poincareXX} was proved in great generality (in the ergodic regime) in \cite{CMRT}. Their generator is given by $\mathcal{ L}_{\rm env}f(\sigma)=\sum_{x\in\bbZ}c_x(\sigma)(\rho(1-\sigma(x))+(1-\rho)\sigma(x))\left[f(\sigma^x)-f(\sigma)\right]$ with $\rho\in (0,1)$ and $c_x$ encodes a kinetic constraint which should be of the type ``there are enough empty sites in a neighbourhood of $x$''. We refer to \cite{CMRT} for precise conditions that the constraints need to satisfy and identification of the regime where \eqref{poincareXX} is satisfied. Examples of constraints include the FA-$j$f model, where $c_x(\sigma)=\mathbf{1}_{\sum_{y\sim x}(1-\sigma(y))\geq j}$ with $j\leq d$, or generalized East processes $c_x(\sigma)=1-\prod_{i=1}^d\sigma(x+e_i)$ with $(e_i)_{i=1,\cdots,d}$ the canonical basis of $\bbR^d$. The presence of the constraint gives rise to a number of difficulties {as for instance} the lack of attractivity. Consequently, most of the general existing results, as e.g. \cite{AdHR,BZ,BMP,BK,DKL,RAS2,RV}, do not apply to this class. 

\noindent{\emph{Random walks.}
We give here three simple though non--trivial examples of different nature for which our results apply. 
The simplest case is when the unperturbed walker is not present: that is, 
$r(y , \cdot )\equiv 0$ for all $y \in \bbZ^d$. Then environment and environment from the unperturbed walker coincide and all our results 
are valid for any random walk choice satisfying our basic assumptions, provided that the rates are small enough. 
As a second case, we can consider random walks obtained as perturbations of simple symmetric random walks, that is, $r(y,\cdot)=1/2$ for $y=\pm 1$ and $0$ else, again, provided that Assumption \ref{asso3} is in force. An interesting example is for $r_\e(y,\eta)=\pm\e(2\eta(0)-1)\mathds{1}_{\{y=\pm 1\}}$ for which the resulting random walk has the tendency to stick to the space-time interfaces between empty and occupied regions in the environment. A more detailed analysis of this walk on the East model, mainly based on the results in this work, can be found in \cite{ABF}. 
The last case is when the unperturbed walk depends effectively on the underlying environment, for which, in order to check the crucial Assumption \ref{ass:stat}, the specific choice of the environment is essential. For example, if the latter is given by a KCSM, as }{in \cite{JKGC}, one could consider a probe particle driven by a constant external field in the KCSM started from a stationary distribution $\mu$ left invariant by the non--driven prove. In the one-dimensional case one possibility is $r(\pm 1,\eta)=(1-\eta(0))(1-\eta(\pm 1))$, $r_\e(\pm 1,\eta)=(1-\eta(0))(1-\eta(\pm 1))\tilde{r}_\e(\pm 1)$, $\tilde{r}_\e( 1)=2/(1+e^{-\e})=e^\e\tilde{r}_\e( -1)$, the other rates are zero and $\e$ is small enough.} 

\subsection{Main results}\label{torta3}
In the rest of this section, we suppose Assumptions {\ref{ass:base1},...,\ref{asso3}}  to be satisfied without further mention. 

{Concerning  the environment seen by the walker $( \t_{X_t} \s_t)_{t \geq 0}$, we denote by  $\bbP_\nu$ its law on $D( \bbR_+; \O)$, and by $\bbE_\nu$ the associated expectation,  when  the initial distribution is $\nu$ (if $\nu=\d_{\eta}$, we  simply write $\bbP_\eta$ and $\bbE_\eta$). We denote by $S(t)$ its  Markov  semigroup on $L^2(\mu)$, i.e.\ 
$( S(t) f)(\eta) := \bbE_\eta \bigl( f( \eta_t)\bigr) $ $\mu$--a.s., and we write $L_{\rm ew}$ for its infinitesimal generator. For the perturbed version  $( \t_{X^{(\e)} _t} \s_t)_{t \geq 0}$   we use analogously the notation $\bbP_\eta^{(\e)}$, $\bbE_\eta^{(\e)}$
  for the law and the expectation. Moreover, we define 
   $(S_\e(t))_{t \geq 0}$ as the semigroup in $L^2(\mu)$ with  infinitesimal generator 
   $L_{\rm ew}^{(\e)}= L_{\rm ew}+ \hat{L}_\e$ (see Section \ref{auto} for a detailed discussion). As proved in 
   Section \ref{auto}, $(S_\e(t) f) (\eta) = \bbE^{(\e)}_\eta ( f(\eta_t) )$ $\mu$--a.s. at least for bounded continuous functions $f$. 
   }

{Given $t \geq 0$ we define iteratively the operators $S_\e ^{(n)}(t)$ as 
$ S_\e^{(0) }(t):= S(t)$,  $S_\e^{(n+1)} (t):=\int_0^t  S(t-s) \hat L_\e S_\e^{(n)}(s) ds$. These operators enter in the Dyson expansion $S_\e(t)= \sum_{n=0}^\infty S_\e ^{(n)}$ discussed in detail in Section \ref{viva_segovia}. }


\begin{TheoremA}[Asymptotic perturbed stationary state and velocity]\label{late}
  $ \,\,$
\begin{itemize}
\item[(i)] The {environment seen by the perturbed walker   admits a unique distribution  $\mu_\e $ on $\O$  which is invariant and absolutely continuous w.r.t.\@ $\mu$}. {Whenever the environment seen by the perturbed walker  has initial distribution  absolutely continuous w.r.t.\@ $\mu$, its distribution at time $t$ weakly converges to $\mu_\e$ as $ t \to \infty$}. Moreover, $\mu_\e$  is  ergodic w.r.t.\@ time--translations  and 

  \begin{equation}\label{muinfinity_bisXXX}
\mu_\e(f) = \mu(f)+\sum _{n=0}^\infty \int_0^\infty \mu\left( \hat L_\e S_\e^{(n)}(s) f \right) ds\,, \qquad f \in L^2(\mu)\,, 
\end{equation}
where
$\int_0^\infty \bigl|  \mu\left( \hat L_\e S_\e^{(n)}(s)f \right) \bigr| ds
\leq    (\e  /\g)^{n+1} \| f-\mu(f) \|$.

  \item[(ii)]    If the additional condition 
  \begin{equation}\label{ellipticity}
  r(y,\eta)>0 \; \; \Longrightarrow\;\; r_\e(y,\eta)>0  
  \end{equation} 
  is satisfied, 
  then 
 $\mu$ and $\mu_\e$ are mutually absolutely continuous.
 
 {Alternatively, if there exist subsets $V, V_\e \subset \bbZ^d$ such that 
 \begin{itemize}
 \item[(a)] $r(y, \eta )>0 $ iff $y \in V$, 
 \item[(b)] $\hat r_\e(y, \eta )>0 $ iff $y \in V_\e$, 
 \item[(c)] each vector in $V$ can be written as sum of vectors in $V_\e$,
 \end{itemize}
  then $\mu$ and $\mu_\e$ are mutually absolutely continuous.}
 \item[(iii)] {If \eqref{nthMoment} holds with $n=2$}, then  defining  $v(\e):= \mu_\e (j^{(\e)})$ with  $j^{(\e)}(\eta):=\sum_{y\in\mathbb{Z}^d}y r_\e(y,\eta),\eta\in\O$,   it holds
\begin{equation}\label{veloce}
P^{(\e)}_{\eta,0}\Big(\lim _{t \to \infty} \frac{X^{(\e)}_t}{t}= v(\e)\Big)=1
\end{equation} 
for 
$\mu_\e$--a.e.\@ $\eta$ and for $\eta$ varying in a set of $\mu$--probability larger than $1-\e^2/(\g-\e)^2$. If {$\mu$ and $\mu_\e$ are mutually absolutely continuous as in   Item (ii)}, then \eqref{veloce} holds for $\mu$--a.e.\@ $\eta$.
\item[(iv)] The asymptotic velocity $v(\e)$ can be expressed by a series expansion in $\e$ as 
\begin{equation}\label{speed}
v(\e)=\mu(j^{(\e)})+\sum_{n=0}^\infty\int_0^\infty \mu( \hat L_\e S_\e^{(n)}(s) j^{(\e)} )ds.\end{equation}
Moreover,      $  \bigl| \mu(  \hat L_\e S_\e^{(n)}(s) j^{(\e)})\bigl |  \leq    \e^{n+1} e^{-\g s } s^n \|j^{(\e)}\|_\infty /n!$  for all  $n\geq 0$.  
\end{itemize}
\end{TheoremA}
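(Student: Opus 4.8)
\textbf{Proof plan for Theorem \ref{late}.}

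The plan is to reduce everything to the general perturbative machinery announced for Markov processes (Theorem \ref{teo_invariante}, Corollary \ref{lacho_drom}, Proposition \ref{gelem_gelem}) applied to the environment seen by the walker. First I would check that the hypotheses of that abstract framework are met: the unperturbed process here is $(\tau_{X_t}\sigma_t)_{t\ge 0}$, whose $L^2(\mu)$--generator is $L_{\rm ew}$; by Assumption \ref{ass:stat} (and the Remark following it) $\mu$ is invariant for this process, and I must argue that it inherits the Poincaré inequality with the same constant $\gamma$. This should follow from the fact that the jump part $\mathcal{L}_{\rm jump}$ contributes nothing to the Dirichlet form evaluated against $\mu$ combined with the commutation with translations \eqref{retour}, so that $-\mu(f L_{\rm ew} f) = -\mu(f L_{\rm env} f) \ge \gamma\|f-\mu(f)\|^2$; I would also need ergodicity of $\mu$ for $L_{\rm ew}$, which again follows from the Poincaré inequality (spectral gap $\Rightarrow$ $0$ is a simple eigenvalue). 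The perturbation is $\hat L_\e$, bounded in $L^2(\mu)$ with norm $\e<\gamma$ by \eqref{epifania} and Assumption \ref{asso3}, so the smallness condition of the abstract theorem holds.

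With this in place, Item (i) is essentially a restatement of Theorem \ref{teo_invariante}: existence and uniqueness of an invariant $\mu_\e \ll \mu$, ergodicity of $\mu_\e$ w.r.t.\ time translations, weak convergence to $\mu_\e$ from any initial law $\ll \mu$, and the Dyson--Phillips series representation \eqref{muinfinity_bisXXX} of its density, together with the geometric bound $\int_0^\infty|\mu(\hat L_\e S_\e^{(n)}(s)f)|\,ds \le (\e/\gamma)^{n+1}\|f-\mu(f)\|$. The last bound I would get by inserting $\mu(\hat L_\e g)=\mu((\hat L_\e^* 1) g)$, noting $\hat L_\e^* 1 = 0$ in fact — more carefully, I would use that $S(s)$ restricted to mean-zero functions contracts like $e^{-\gamma s}$, that $\|\hat L_\e\|=\e$, and that $S_\e^{(n)}(s)$ is an $n$-fold time-ordered integral, giving the factor $\e^{n+1}e^{-\gamma s}s^n/n!$ which integrates to $(\e/\gamma)^{n+1}$; this same computation is exactly the sharper bound claimed in Item (iv), so Items (i) and (iv) share their analytic core.

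For Item (ii) the $L^2$--perturbation theory only gives $\mu_\e \ll \mu$; to get the reverse absolute continuity one needs a genuinely probabilistic (irreducibility-type) argument rather than the spectral one. Here I would use the coupling construction of Section \ref{san_valentino} comparing the perturbed and unperturbed walkers: under \eqref{ellipticity} every allowed unperturbed jump remains an allowed perturbed jump, so one can realize the unperturbed environment-viewed-from-the-walker as a subprocess of the perturbed one, and conclude that any $\mu_\e$--null closed invariant set is also $\mu$--null — equivalently that $d\mu_\e/d\mu>0$ $\mu$-a.s. The variant with the sets $V,V_\e$ is the same idea: condition (c) ensures that the support group generated by the perturbed jumps contains that of the unperturbed jumps, so translation-irreducibility is preserved. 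I expect this to be the most delicate point, since it is the one place where the soft $L^2$ argument is insufficient and one must invoke the explicit dynamics; I would phrase it via a support/irreducibility lemma for the density $d\mu_\e/d\mu$, possibly proved by a Harris-type or a direct Dyson-series positivity argument (the series \eqref{muinfinity_bisXXX} with $f = \mathbf 1_A$ shows the density is built from nonnegative-in-expectation contributions involving $\hat L_\e$, and under (a)--(c) these spread mass over all of $\mathbb{Z}^d$).

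Finally Items (iii) and (iv) follow by applying Corollary \ref{lacho_drom} (the LLN for additive functionals of the perturbed Markov process) to the additive functional with local drift $j^{(\e)}(\eta)=\sum_y y\, r_\e(y,\eta)$, which lies in $L^2(\mu)$ (indeed $L^\infty$) once \eqref{nthMoment} holds with $n=1$, and in fact with $n=2$ one controls its second moment as required by the hypotheses of that corollary. The ergodic theorem for $(S_\e(t))$ under $\mu_\e$ gives $X_t^{(\e)}/t \to \mu_\e(j^{(\e)}) = v(\e)$ a.s.\ under $\mathbb{P}^{(\e)}_{\mu_\e}$, hence for $\mu_\e$-a.e.\ $\eta$; to pass to a set of $\mu$-measure $\ge 1 - \e^2/(\gamma-\e)^2$ I would bound the $\mu$-measure of the complement of $\{d\mu_\e/d\mu>0\}$... more precisely, use that the $L^2(\mu)$-norm of $d\mu_\e/d\mu - 1$ is at most $\sum_{n\ge 0}(\e/\gamma)^{n+1}\cdot(\text{something})$ — actually the cleanest route is the bound $\|d\mu_\e/d\mu - 1\| \le \e/(\gamma-\e)$ coming from summing the geometric series in Item (i), whence by Chebyshev $\mu(d\mu_\e/d\mu = 0) \le \mu((d\mu_\e/d\mu-1)^2) \le \e^2/(\gamma-\e)^2$, and on the complement of that null-of-$\mu_\e$ set the a.s.\ convergence transfers. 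When (ii) applies, $\mu_\e\sim\mu$ so the exceptional set is empty and \eqref{veloce} holds $\mu$-a.s. The series \eqref{speed} for $v(\e)$ is then just \eqref{muinfinity_bisXXX} evaluated at $f=j^{(\e)}$, and the stated termwise bound $|\mu(\hat L_\e S_\e^{(n)}(s)j^{(\e)})|\le \e^{n+1}e^{-\gamma s}s^n\|j^{(\e)}\|_\infty/n!$ is the intermediate inequality from the computation already used in Item (i). The only genuine obstacle, as noted, is Item (ii); the rest is bookkeeping on top of the abstract theorems.
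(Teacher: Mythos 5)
Your overall architecture --- reduce to Theorem \ref{teo_invariante} by taking $L=L_{\rm ew}$ and $\hat L_\e$ as the perturbation, then handle Item (ii) by a support comparison through the coupling of Section \ref{san_valentino} --- coincides with the paper's. Two remarks on the reduction. First, the jump part does not contribute ``nothing'' to the Dirichlet form: by translation invariance of $\mu$ one has $-(f,L_{\rm jump}f)_\mu=\frac{1}{4}\sum_y\int\mu(d\eta)\,\bigl[r(y,\eta)+r(-y,\t_y\eta)\bigr]\bigl[f(\t_y\eta)-f(\eta)\bigr]^2\geq 0$, so $-\mu(fL_{\rm ew}f)\geq-\mu(fL_{\rm env}f)\geq\g\var_\mu(f)$; it is an inequality, not the equality you wrote (this is Proposition \ref{chiave101}, cf.\@ \eqref{gap2}). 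Second, for Item (ii) the paper makes your ``subprocess'' idea precise through the abstract criterion \eqref{ho_fame} of Theorem \ref{teo_invariante} combined with the explicit series \eqref{noi} expressing $\bbP^{(\e)}_\eta(\eta_t\in B)$ as a sum over Poisson jump times and displacements; your sketch is compatible in spirit but would need exactly that level of detail to compare the supports of the two transition kernels, in particular for the variant with the sets $V,V_\e$ where one must decompose each unperturbed jump into a chain of perturbed jumps.

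The genuine gap is in Item (iii). The position $X^{(\e)}_t$ is \emph{not} the additive functional $\int_0^t j^{(\e)}(\eta_s)\,ds$, so Corollary \ref{lacho_drom} alone does not yield $X^{(\e)}_t/t\to v(\e)$. One must first establish the martingale decomposition $X^{(\e)}_t=\hat M_t+\int_0^t j^{(\e)}(\eta_s)\,ds$ (the compensator identity \eqref{fullMart}, obtained from the Poisson--clock construction of Section \ref{san_valentino}) and then prove separately that $\hat M_t/t\to 0$ almost surely. This second step is not bookkeeping: it requires identifying the predictable quadratic variation $\langle\hat M^{(i)}\rangle_t=\int_0^t\sum_y y_i^2\,r_\e(y,\eta_s)\,ds$ and invoking the strong law of large numbers for square--integrable martingales (Claim \ref{claim_zero} in the paper), and it is precisely here that the hypothesis \eqref{nthMoment} with $n=2$ is used --- you misattribute that hypothesis to Corollary \ref{lacho_drom}, which only needs $j^{(\e)}\in L^1(\mu_\e)$. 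Your Chebyshev transfer from $\mu_\e$--a.e.\@ to a set of $\mu$--measure at least $1-\e^2/(\g-\e)^2$ is correct and is the content of Remark \ref{wurstel}; the remaining parts of Items (i) and (iv) are, as you say, direct consequences of \eqref{muinfinity_bis} and \eqref{caffettino}.
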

{\begin{remark} Further properties on the distribution $\mu_\e$ and on the semigroup $S_\e(t)$  are stated, in a more general context,  in Section \ref{viva_segovia}  (see in particular Proposition \ref{spezzatino} and formulas   \eqref{convergencetomean}, \eqref{barbiere}, \eqref{benedicte}, \eqref{expansiong}  and  \eqref{arachidi_bis}   in Theorem \ref{teo_invariante}).
\end{remark}} 

The proof of Theorem \ref{late} is given in Section \ref{RWproof_mela}.


 \begin{TheoremA}\label{porte-bonheur} 
Suppose that $\mu$   has the following decorrelation property: given functions  $f,g$ with bounded support, we have
\begin{equation}\label{zietto} \lim _{|x|\to \infty}  {\rm Cov}_\mu(f,\t_x g)=0\,.\end{equation}
Then, for any   local function $f$, it holds
\begin{equation}\label{belgio}
\lim _{|x|\to \infty}  \mu_\e(\t_x f)= \mu(f) \,.
\end{equation}
\end{TheoremA}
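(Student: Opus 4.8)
The plan is to realize $\mu_\e$ as an $L^2(\mu)$--density against $\mu$, rewrite $\mu_\e(\t_x f)-\mu(f)$ as a covariance under $\mu$, and then reduce the statement to the decorrelation hypothesis \eqref{zietto} by approximating that density with local functions. The first step is to upgrade the absolute continuity in Theorem \ref{late}(i) to an $L^2$--statement: summing over $n$ the bound displayed right after \eqref{muinfinity_bisXXX} (a geometric series, convergent since $\e<\g$ by Assumption \ref{asso3}) gives, for every bounded measurable $f$,
\[
\bigl|\mu_\e(f)-\mu(f)\bigr|\le\sum_{n\ge0}(\e/\g)^{n+1}\|f-\mu(f)\|=\frac{\e}{\g-\e}\|f-\mu(f)\|\,,
\]
so $f\mapsto\mu_\e(f)-\mu(f)$ is a bounded linear functional on the dense subspace of bounded functions of $L^2(\mu)$ that vanishes on constants. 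By the Riesz representation theorem there is then $\psi_\e\in L^2(\mu)$ with $\mu(\psi_\e)=0$, $\|\psi_\e\|\le\e/(\g-\e)$ and $\mu_\e(f)=\mu((1+\psi_\e)f)$ for all bounded $f$; hence $\rho_\e:=\mathrm{d}\mu_\e/\mathrm{d}\mu=1+\psi_\e\in L^2(\mu)$ and $\mu(\rho_\e)=1$. (This is also part of Theorem \ref{teo_invariante}.)

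Next, for a local $f$, translation invariance of $\mu$ (Assumption \ref{ass:base1}) together with $\mu(\rho_\e)=1$ gives
\[
\mu_\e(\t_x f)-\mu(f)=\mu(\rho_\e\,\t_x f)-\mu(\rho_\e)\,\mu(\t_x f)={\rm Cov}_\mu(\rho_\e,\t_x f)\,.
\]
Local functions are dense in $L^2(\mu)$ (the algebra of functions depending on finitely many coordinates is uniformly dense in $C(\O)$ by Stone--Weierstrass, and $C(\O)$ is dense in $L^2(\mu)$), so given $\delta>0$ one picks a local $g$ with $\|\rho_\e-g\|\le\delta$ and uses bilinearity of the covariance together with Cauchy--Schwarz:
\[
\bigl|{\rm Cov}_\mu(\rho_\e,\t_x f)-{\rm Cov}_\mu(g,\t_x f)\bigr|=\bigl|{\rm Cov}_\mu(\rho_\e-g,\t_x f)\bigr|\le\|\rho_\e-g\|\,\|\t_x f\|=\|\rho_\e-g\|\,\|f\|\le\delta\|f\|\,.
\]
Since $g$ and $f$ are both local, \eqref{zietto} yields ${\rm Cov}_\mu(g,\t_x f)\to0$ as $|x|\to\infty$, whence $\limsup_{|x|\to\infty}|\mu_\e(\t_x f)-\mu(f)|\le\delta\|f\|$; letting $\delta\downarrow0$ gives \eqref{belgio}.

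The argument is mostly soft; the one substantive ingredient is the quantitative $L^2(\mu)$ estimate of the first step — that the Dyson--Phillips expansion in Theorem \ref{late}(i) (equivalently Theorem \ref{teo_invariante}) forces $\rho_\e\in L^2(\mu)$ and not merely $L^1(\mu)$ — after which the approximation is routine. Should one wish to avoid even this, the same scheme runs with an $L^1(\mu)$--approximation of $\rho_\e$ and $\|f\|_\infty$ in place of $\|f\|$, since local functions are bounded; no further obstacle is anticipated.
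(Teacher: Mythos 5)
Your proof is correct, and it follows the same overall scheme as the paper's: write $\mu_\e(\t_x f)-\mu(f)={\rm Cov}_\mu(h_\e,\t_x f)$ with $h_\e=d\mu_\e/d\mu$, approximate $h_\e$ by a local function, split the covariance by bilinearity, and invoke \eqref{zietto}. The only real difference is the approximation mechanism. You use that $h_\e\in L^2(\mu)$ (already part of Theorem \ref{teo_invariante}, or re-derived via Riesz as you do) together with the $L^2(\mu)$--density of local functions (Stone--Weierstrass plus Lemma \ref{miele}), and control the error term by Cauchy--Schwarz against $\|f\|$. The paper instead approximates a general $h\in L^1(\mu)$ by the conditional expectations $\mu(h|\cG_n)$ via L\'evy's upward theorem and controls the error against $\|f\|_\infty$. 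The two are interchangeable here: the paper's variant establishes the slightly more general claim \eqref{limoncello} for an arbitrary $L^1$ density, while yours exploits the quantitative bound $\|h_\e-\mathds{1}\|\le\e/(\g-\e)$ that the perturbative analysis provides; your closing remark about the $L^1$ fallback with $\|f\|_\infty$ is essentially the paper's route.
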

The proof of Theorem \ref{porte-bonheur} is given in Section \ref{tao}.

Under stronger conditions, we can estimate the decay of $|\mu_\e(\t_x f)- \mu(f) |$.  To this aim we fix some notation and terminology.
Given $x \in \bbZ^d$ and $\ell >0$, we introduce the uniform box  $B(x, \ell)= \{ y \in \bbZ^d \,:\, | x-y|_\infty \leq \ell\} $. If $x=0$, we simply write $B(\ell)$.

 \begin{definition}
The  stationary   process \emph{dynamic random environment} with generator $L_{\rm env}$ and 
   initial distribution $\mu$  has \emph{finite speed of propagation}  if  there exists a function $\a: \bbR_+ \to \bbR_+$ vanishing at infinity (i.e.\@ $\lim_{u \to \infty} \a (u)=0$)  and a constant $C>0$ such that 
\begin{equation}
\left|\bbE^{\rm env} _\mu[XY]-\bbE_\mu^{\rm env} [X]\bbE_\mu^{\rm env} [Y]\right|\leq  \a(d(\L,\L'))
\end{equation}
for any pair of random variables $X,Y$ bounded  in modulus by one and for any pair of sets $\L, \L' \subset \bbZ^d$, such that (for some $t \geq 0$)   $X$ is determined by  $\bigl( \eta_s(x)\,:\,  0 \leq s \leq t, \; x \in \L\bigr) $, $Y$  is determined by  $\bigl( \eta_s(x)\,:\,  0 \leq s \leq t, \; x \in \L'\bigr) $, and 
$d(\L, \L')= \min \{ |x-x'|_\infty\,:\, x \in \L, \, x' \in \L'\} \geq Ct$.
 \end{definition}

The above property is satisfied for example by  many interacting particle systems {on $\bbZ^d$, in particular it is fulfilled  if the transition rates are bounded and  have finite range, as can be easily checked  from the graphical construction (see e.g.  {\cite[Chap. III, Sec. 6]{L}, \cite[Sec. 3.3]{Ma}).}   }

\begin{TheoremA}[Quantitative approximation of $\mu_\e$ by $\mu$ at infinity]\label{canicule}  {In addition to our main assumptions, }  assume the following properties:
\begin{enumerate}
\item[(i)]  translation invariance of the unperturbed dynamics, i.e.\@ $S(t)\bigl( f\circ \t_x ) =\bigl(S(t) f\bigr) \circ\tau_x$,  for any local function $f$,  $x \in \bbZ^d$ and $ t \geq 0$,
\item[(ii)]  the stationary   process  with generator $L_{\rm env}$ has finite speed of propagation with  $\alpha(u)\leq e^{-\theta u }$  for some $\theta>0$,
\item[(iii)]  $r,\hat r_\e $ have finite range, i.e.\@ $\exists R>0$ such that  $r (z,\cdot) \equiv 0 $ and $\hat r_\e(z, \cdot)\equiv 0$ if $z \not \in B(R)$ and such the support of $r (z, \cdot)$ and  $\hat r_\e(z, \cdot) $  is included in $B(R)$.
\end{enumerate}
Then there exists $\theta'>0$ (depending on $\e$ and $\g$) such that, for any local function  $f: \O \to \bbR $, it holds
\begin{equation}\label{tropchaudmu}
|\mu_\e(\t_x f )-\mu(f)|\leq C(f,\e,\g)e^{-\theta'|x|_\infty}\,,
\end{equation}
where $C(f, \e, \g)$ is a finite constant depending only on $f,\e,\g$.
\end{TheoremA}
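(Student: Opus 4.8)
The plan is to exploit the series expansion \eqref{muinfinity_bisXXX} from Theorem~\ref{late}(i), which represents $\mu_\e(\t_x f)-\mu(f)$ as
\[
\mu_\e(\t_x f)-\mu(f)=\sum_{n=0}^\infty\int_0^\infty \mu\bigl(\hat L_\e S_\e^{(n)}(s)\,\t_x f\bigr)\,ds,
\]
and to estimate each term by tracking how far information can have traveled, in space-time, from the origin (where $\hat L_\e$ acts, since the support of $\hat r_\e(z,\cdot)$ lies in $B(R)$) to the region near $x$ (the support of $\t_x f$). The central heuristic is that $S_\e^{(n)}(s)$ is a composition of $n+1$ unperturbed semigroups $S(t_i)$ interleaved with $n$ applications of $\hat L_\e$, each of which, being built from finite-range rates and applied to a function, can shift the relevant support by at most $R$; meanwhile each unperturbed semigroup run for time $t_i$ moves the support by a distance controlled by the finite speed of propagation $C$ (up to an error governed by $\alpha(u)\le e^{-\theta u}$). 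So for $\mu(\hat L_\e S_\e^{(n)}(s)\,\t_x f)$ to be appreciably different from the product of expectations (which would be zero by $\mu$-invariance considerations), we need $C s + (n+1)R \gtrsim |x|_\infty$, and otherwise the covariance is exponentially small in $|x|_\infty - Cs - (n+1)R$.

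Concretely, I would proceed as follows. First, fix $n$ and the intermediate times $0\le t_1,\dots,t_{n+1}$ with $\sum t_i = s$; unfolding the iterative definition of $S_\e^{(n)}$ writes the integrand as an iterated integral of $\mu\bigl(\hat L_\e S(t_1)\hat L_\e S(t_2)\cdots \hat L_\e S(t_{n+1})\,\t_x f\bigr)$. Working from the right, $S(t_{n+1})\t_x f$ is, by finite speed of propagation (assumption (ii)) and translation invariance of the unperturbed dynamics (assumption (i)), well-approximated (error $\le e^{-\theta(d-Ct_{n+1})}$ when $d>Ct_{n+1}$) by a function whose ``effective support'' sits in $B(x, Ct_{n+1}+\text{const})$; applying $\hat L_\e$ enlarges this by $R$; applying $S(t_n)$ enlarges it by $Ct_n$ up to an exponentially small correction; and so on. After $n+1$ such steps the effective support has radius $\le Cs + (n+1)R$ around $x$. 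The outermost $\mu(\hat L_\e(\cdot))$, since $\hat L_\e$ is supported near the origin, then pairs a quantity localized near $0$ with one localized near $x$; using that $\mu(\hat L_\e g)$ vanishes when $g$ is constant together with the decorrelation estimates (finite speed of propagation gives spatial decorrelation of $\mu$ via a standard argument, or one invokes \eqref{zietto}-type decay as in Theorem~\ref{porte-bonheur}), this is bounded by $\|\hat L_\e\|\,\|f\|_\infty$ times $e^{-\theta'(|x|_\infty - Cs - (n+1)R)}$ whenever the exponent's argument is positive, and trivially by $(\e/\gamma)^{n+1}\|f-\mu(f)\|$ always (from Theorem~\ref{late}(i)).

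Then I would combine the two bounds: split the $s$-integral at $s_n:=(|x|_\infty-(n+1)R)/(2C)$. For $s\le s_n$ use the exponential estimate, which contributes something like $e^{-\theta' |x|_\infty/2}$ after integrating (the $s^n e^{-\gamma s}$ decay from Theorem~\ref{late}(iv) keeps the $s$-integral finite and, crucially, provides a factor controlling the sum over $n$). For $s>s_n$ use the crude bound together with the $\e^{n+1}e^{-\gamma s}s^n/n!$ decay from Theorem~\ref{late}(iv): since $s$ is then of order $|x|_\infty$, the factor $e^{-\gamma s}$ alone already gives exponential decay in $|x|_\infty$, and the remaining sum $\sum_n (\e s)^n/n! = e^{\e s}$ is absorbed because $\e<\gamma$, i.e.\ $e^{-(\gamma-\e)s}$ still decays in $|x|_\infty$. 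The sum over $n$ is handled by noting $(n+1)R \le |x|_\infty$ restricts $n\le |x|_\infty/R - 1$ in the first regime (finitely many terms, each exponentially small), while in the second regime the $1/n!$ and geometric factors give an absolutely convergent series; one arrives at \eqref{tropchaudmu} with $\theta'$ a suitable fraction of $\min(\theta,\gamma-\e)/C$ and $C(f,\e,\gamma)$ proportional to $\|f\|_\infty$ times a constant depending on the support of $f$, $R$, $\e$, $\gamma$, $\theta$.

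The main obstacle I anticipate is making rigorous the ``effective support propagation'' step — i.e.\ turning the finite-speed-of-propagation covariance bound (which is a statement about two observables, not about the semigroup mapping local functions to near-local functions) into a usable claim that $S(t)(\t_x f)$ is, up to an $L^\infty$ or $L^2(\mu)$ error of size $\le \|f\|_\infty e^{-\theta(d-Ct)}$, replaceable by a function measurable with respect to the configuration in a box of radius $\sim Ct$ around $x$, and then iterating this through alternating applications of $\hat L_\e$ and $S(t_i)$ while keeping the error terms under control (errors accumulate additively over the $n+1$ stages, and one must check the accumulated error is still summable against the $n$-dependent factors). A clean way to organize this is to first prove a lemma: if $g$ is measurable w.r.t.\ $B(y,\ell)$ and $\|g\|_\infty\le 1$, then for any local $h$ supported in $B(R)$ with $|y|_\infty > \ell + R + Ct$ one has $|\mu(h\cdot S(t)g) - \mu(h)\mu(g)| \le \|h\|_\infty e^{-\theta(|y|_\infty - \ell - Ct)}$, obtained directly from finite speed of propagation applied to $X = h$ (measurable in $B(R)\subset$ space-time slab at times $\ge 0$... actually one uses the environment dynamics only), and $Y$ a bounded approximation of $S(t)g$; the iterated structure then reduces to repeated use of this lemma with $\ell$ growing by $R + Ct_i$ at each stage. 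Getting the quantifiers and the time-slab measurability conditions in the definition of finite speed of propagation to line up exactly with the alternating semigroup/jump structure is the fiddly part, but there is no conceptual difficulty once the bookkeeping is set up.
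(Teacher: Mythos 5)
Your overall architecture is the same as the paper's: expand $\mu_\e(\t_x f)-\mu(f)$ via \eqref{muinfinity_bisXXX}, prove a bound of the form $C_0\frac{(Ct)^n}{n!}e^{-\theta_2|x|+\theta_3 t}$ on each term, combine it with the $\e e^{-\g t}(\e t)^n\|f\|/n!$ bound from \eqref{caffettino}, and split the time integral (the paper splits at $t=\a|x|$ uniformly in $n$; your $n$-dependent cut would also work). The gap is in the step you yourself flag as ``fiddly'': the effective-support-propagation lemma you propose is not merely delicate, it is false as stated, for two reasons. First, the semigroup $S(t)$ appearing in $S_\e^{(n)}$ is that of the environment seen from the \emph{unperturbed walker} (generator $L_{\rm ew}=L_{\rm env}+L_{\rm jump}$), whereas the finite-speed-of-propagation hypothesis (ii) concerns the pure environment (generator $L_{\rm env}$). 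Since $S(t)(\t_x f)(\eta)=\cE_\eta[f(\t_{x+X_t}\s_t)]$ and the walk $X_t$ starts at the origin with rates read off from the environment there, $S(t)(\t_x f)$ depends on the environment trajectory both near $x$ and near $0$; it is not approximable by a function measurable w.r.t.\ a box around $x$ alone (assumption (i) does not repair this unless the walk is decoupled from the environment). Second, each application of $\hat L_\e$ multiplies by a rate $\hat r_\e(z,\cdot)$ supported in $B(R)$, so already after one step the ``effective support'' is a union of two distant boxes, and a right-to-left propagation bookkeeping never produces a single near-$x$ observable to pair against a near-$0$ observable.

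The paper's proof supplies exactly the ideas needed to fix this, and they are substantive rather than bookkeeping. It first unfolds $\hat L_\e S_\e^{(n)}(t)(\t_x f)$ into a single expectation under the unperturbed environment-seen-from-walker process (Lemma \ref{nasty}), of a product of $n+1$ rate factors localized near the origin times a \emph{difference} of translates of $f$ localized near $x$; the vanishing of the leading term then comes from $\bbE^{\rm env}_\mu[\overline{\D f_x}]=0$ by translation invariance of $\mu$ (your ``$\mu(\hat L_\e g)=0$ for $g$ constant'' is a cousin of this but is not the mechanism actually used). It then passes to the explicit coupling of Section \ref{san_valentino}, controls the walk's range by a Poisson tail bound, and periodizes the environment in $B(2\b(x))$ so that the (modified) walk is measurable w.r.t.\ the environment in a bounded box around the origin; only after conditioning on the walk does it apply finite speed of propagation to the pure environment process, separating the near-origin space-time observable (rates plus walk) from the near-$x$ one. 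A separate crude bound handles the regime $|x|-4\b(x)-2R(n+1)\leq \k t$. Without these steps your estimate $|\mu(\hat L_\e S_\e^{(n)}(s)\t_x f)|\lesssim e^{-\theta'(|x|_\infty-Cs-(n+1)R)}$ is not established.
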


\begin{remark}
One could prove Theorem~\ref{canicule} without Assumption (i), and  also its analogue for different decays in the finite speed propagation property, but the treatment would become very technical. Hence we  have preferred to restrict  to the above  simpler case.
\end{remark}

The next lemma  gives a sufficient condition for  Assumption (i) in Theorem \ref{canicule}:
\begin{Lemma}\label{commutare}
Assume~\eqref{retour} and that 
the unperturbed random walk is decoupled from the environment, \i.e.\@ $r(y,\eta)$ does not depend  on $\eta$ for any $y \in \bbZ^d$. Then the assumption  in Item (i) of Theorem \ref{canicule} is satisfied.
\end{Lemma}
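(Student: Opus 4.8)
\textbf{Proof plan for Lemma~\ref{commutare}.}
The goal is to show that the semigroup $S(t)$ of the environment seen from the unperturbed walker commutes with translations, i.e.\ $S(t)(f\circ\tau_x)=(S(t)f)\circ\tau_x$ for local $f$, under the hypotheses that the environment semigroup commutes with translations (\eqref{retour}) and that the unperturbed rates $r(y,\cdot)$ do not depend on the configuration. The natural strategy is to pass to generators: by Assumption~\ref{ass:feller} the generator of $(\tau_{X_t}\sigma_t)_{t\geq 0}$ on $C(\O)$ is $\cL_{\rm ew}:=\cL_{\rm env}+\cL_{\rm jump}$ with $\cL_{\rm jump}f(\eta)=\sum_{y}r(y,\eta)[f(\tau_y\eta)-f(\eta)]$, and since the resolvent (hence the semigroup) is determined by the generator, it suffices to show that $\cL_{\rm ew}$ commutes with the translation operators $T_x$ defined by $(T_xf)(\eta)=f(\tau_x\eta)$, on the core $\cD(\cL_{\rm env})$.

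First I would record that $\cL_{\rm env}$ commutes with $T_x$: differentiating \eqref{retour} at $t=0$ gives $\cL_{\rm env}(f\circ\tau_x)=(\cL_{\rm env}f)\circ\tau_x$ for $f$ in the domain, and $T_x$ maps $\cD(\cL_{\rm env})$ into itself. Next I would treat $\cL_{\rm jump}$ directly: when $r(y,\cdot)\equiv r(y)$ is constant, for any $f\in C(\O)$ one computes
\begin{equation*}
\cL_{\rm jump}(T_xf)(\eta)=\sum_{y}r(y)\big[f(\tau_x\tau_y\eta)-f(\tau_x\eta)\big]
=\sum_{y}r(y)\big[f(\tau_y\tau_x\eta)-f(\tau_x\eta)\big]=(\cL_{\rm jump}f)(\tau_x\eta),
\end{equation*}
using that the translations $\tau_x,\tau_y$ commute on $\O$ and that $r(y)$ carries no $\eta$-dependence (this last point is exactly where the decoupling hypothesis is used — if $r(y,\cdot)$ depended on $\eta$ the shifted argument $\tau_x\eta$ would appear inside the rate and the identity would fail). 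Adding the two pieces yields $\cL_{\rm ew}T_x=T_x\cL_{\rm ew}$ on $\cD(\cL_{\rm env})$.

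Finally I would upgrade the generator identity to the semigroup level. Since $T_x$ is a bounded (isometric) operator on $C(\O)$ commuting with $\cL_{\rm ew}$ on its domain, it commutes with the resolvent $(\lambda-\cL_{\rm ew})^{-1}$ for $\lambda$ large, and hence, by the Hille–Yosida / Trotter approximation (e.g.\ $S(t)f=\lim_{n}(I-\tfrac tn\cL_{\rm ew})^{-n}f$, or equivalently $S(t)=\lim_n (\tfrac nt)^n(\tfrac nt-\cL_{\rm ew})^{-n}$), it commutes with $S(t)$ for all $t\geq 0$. This gives $S(t)(f\circ\tau_x)=(S(t)f)\circ\tau_x$ first for $f$ in the core and then, by density and boundedness of $S(t)$ and $T_x$, for all $f\in C(\O)$, in particular for all local $f$, which is Assumption~(i) of Theorem~\ref{canicule}.

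I do not expect a serious obstacle here: the only subtlety is bookkeeping about domains and cores (making sure $T_x$ preserves $\cD(\cL_{\rm env})$ and that the generator identity transfers to the semigroup), which is standard semigroup theory; the substantive point is simply the elementary computation above showing that constant rates make $\cL_{\rm jump}$ translation-covariant. One could alternatively give a probabilistic proof via the graphical/coupling construction of Section~\ref{san_valentino}, observing that when $r$ is deterministic the walk increments depend on the environment only through $\cL_{\rm env}$-driven randomness that is itself translation-covariant, but the generator argument is cleaner and self-contained.
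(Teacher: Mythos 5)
Your proof is correct, but it takes a genuinely different route from the paper's. The paper proves Lemma \ref{commutare} probabilistically via the coupling of Section \ref{san_valentino}: since $r(y,\cdot)$ is constant in $\eta$, the walk $X_\cdot$ constructed from $(\cT,\cU)$ is independent of the environment trajectory $\s_\cdot$ under $\cP_\eta$, so one conditions on $\{X_t=y\}$, applies translation covariance of the environment to each term $\bbE^{\rm env}_\eta[f(\t_{x+y}\s_t)]=\bbE^{\rm env}_{\t_x\eta}[f(\t_y\s_t)]$, and resums to get $\bbE_\eta[f(\t_x\eta_t)]=\bbE_{\t_x\eta}[f(\eta_t)]$ pointwise in $\eta$. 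Your generator argument ($T_x$ commutes with $\cL_{\rm env}$ by differentiating \eqref{retour}, with $\cL_{\rm jump}$ by the elementary computation using $\t_x\t_y=\t_y\t_x$ and constancy of $r(y)$, hence with the resolvent and the semigroup) is sound and arguably cleaner in terms of machinery invoked. What each buys: the coupling proof delivers directly the \emph{pointwise} identity $\bbE_\eta[f(\t_x\eta_t)]=\bbE_{\t_x\eta}[f(\eta_t)]$ for every $\eta\in\O$, which is the form actually used in the proof of Lemma \ref{nasty} (see the footnote there), and it makes the role of decoupling transparent. Your route proves the commutation in whichever topology \eqref{retour} is interpreted; as stated, \eqref{retour} is an identity in $L^2(\mu)$, so to get the pointwise/$C(\O)$ version you need either to interpret \eqref{retour} at the Feller level (as the paper implicitly does in its own proof) or to run your argument in $L^2(\mu)$ with $L_{\rm ew}=L_{\rm env}+L_{\rm jump}$ and accept a $\mu$-a.s.\ conclusion, which suffices since everything in Theorem \ref{canicule} is eventually integrated against the stationary measure $\mu$. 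This is only the domain/topology bookkeeping you already flagged, not a gap in substance.
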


The proofs of Theorem \ref{canicule} and Lemma \ref{commutare} are given in Section \ref{tao}.

\smallskip 

\medskip

Our next result focuses on gaussian fluctuations of the random walk:
 \begin{TheoremA}[Invariance principle for the perturbed walker]\label{QCLT} 
 
 \noindent
 $(i)$  Suppose that  {\eqref{nthMoment} holds with $n=2$.}
Then there exists a symmetric non--negative $d \times d$ matrix $D_\e$ such that,  under {$\int \mu_\e(d \eta) P^{(\e)}_{\eta,0}$}, as $n\to \infty$ the rescaled process
\begin{equation}
\frac{X^{(\e)}_{nt}-v(\e)nt}{\sqrt{n}} 
\end{equation}
converges weakly to a  Brownian motion with covariance matrix $D_\e$.

\noindent
$(ii)$
Suppose in   addition that  $L_{\rm env}$ and $L_{\rm ew}$ are self--adjoint in $L_2(\mu)$, equivalently that $L_{\rm env} $ is self--adjoint and   $r$ satisfies 
\begin{equation}\label{DetailedBal} r(y,\eta)=r(-y,\tau_y\eta).\end{equation}
{Moreover, assume that  \eqref{nthMoment} holds with $n=4$. }Then
 the limiting Brownian motion has non-degenerate covariance matrix {for  $\beta(\e)$ small enough, where}\footnote{Note that by \eqref{epifania}, a small $\beta(\e)$ implies that $\e$ is small. 
}
\begin{equation}\label{betacarotene} \beta(\e):=   \sum _{y\in \bbZ^d}   |y| \sup _\eta  | \hat r_ \e (y, \eta) | \,.\end{equation}
\end{TheoremA}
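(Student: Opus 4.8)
\textbf{Proof proposal for Theorem \ref{QCLT}.}

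The plan is to obtain both parts by specializing the general invariance principle for additive functionals of perturbed Markov processes (Proposition \ref{gelem_gelem}) to the environment seen from the perturbed walker. First I would write $X^{(\e)}_t$ as an additive functional of the environment process $(\eta_s)_{s\ge0}:=(\tau_{X^{(\e)}_s}\sigma_s)_{s\ge0}$ plus a martingale. Concretely, by the standard decomposition $X^{(\e)}_t = M_t + \int_0^t j^{(\e)}(\eta_s)\,ds$, where $M_t$ is a martingale (jumps of the walker minus their compensator) with stationary increments under $\int\mu_\e(d\eta)P^{(\e)}_{\eta,0}$ and $j^{(\e)}(\eta)=\sum_y y\, r_\e(y,\eta)$ has finite second moment by \eqref{nthMoment} with $n=2$. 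For part (i), since $\mu_\e$ is invariant and ergodic for the environment-viewed process (Theorem \ref{late}(i)), applying Proposition \ref{gelem_gelem} to the centered function $j^{(\e)}-v(\e)$ gives the CLT for $\int_0^t(j^{(\e)}(\eta_s)-v(\e))\,ds/\sqrt t$; combining with the martingale CLT for $M_t/\sqrt t$ (Lindeberg/martingale functional CLT, using the $L^2$ bound on jump sizes) and checking joint convergence yields a Brownian motion with some covariance $D_\e$. The componentwise argument plus the Cramér–Wold device promotes this to the full $\bbR^d$-valued statement.

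For part (ii), under self-adjointness of $L_{\rm env}$ and the detailed-balance condition \eqref{DetailedBal}, the unperturbed environment-viewed generator $L_{\rm ew}=L_{\rm env}+L_{\rm jump}$ is self-adjoint in $L^2(\mu)$ (this equivalence is essentially the statement \eqref{DetailedBal} makes explicit). I would then use the variational/Kipnis–Varadhan formula for the diffusion coefficient in direction $a\in\bbR^d$: the limiting variance is controlled from below by the $H_{-1}$-norm of the current, and non-degeneracy amounts to showing $a\cdot D_\e a>0$ for all $a\neq0$. The strategy is a perturbative comparison: at $\e=0$ (or rather for the reference walk, keeping in mind the reference current $j=\sum_y y\,r(y,\cdot)$) one shows the diffusion matrix is non-degenerate — here the hypothesis is built so that the reference dynamics already has a non-degenerate diffusivity, and one needs $\beta(\e)$ small so that the perturbation cannot kill a positive eigenvalue. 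Quantitatively, I would estimate $|a\cdot D_\e a - a\cdot D_0 a|$ by a constant times $\beta(\e)$ (plus higher order), using: the Dyson–Phillips expansion of $S_\e$, the fact that $\mu_\e$ is close to $\mu$ in total variation with the perturbation sharply localized (Theorem \ref{late} and the more refined statements in Section \ref{viva_segovia}), and the resolvent bound coming from the spectral gap $\g-\e>0$. The requirement $n=4$ in \eqref{nthMoment} enters to control the variance of the martingale part and the $L^2$-norm of $j^{(\e)}$ and its image under the resolvent uniformly in the perturbation, so that the error terms are genuinely $O(\beta(\e))$ and not just $o(1)$.

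The main obstacle I anticipate is part (ii): establishing non-degeneracy of $D_0$ for a reference walk that is itself strongly environment-dependent, and then showing the perturbation is a small enough relative correction. The subtlety is that $D_\e$ involves both the ``naive'' contribution $\var_{\mu_\e}$-type terms and the corrector term $-2\langle j^{(\e)}, (-L_{\rm ew}^{(\e)})^{-1} j^{(\e)}\rangle$, and the corrector can in principle cancel the naive part exactly (this is precisely what ``balanced'' walks do). So the real content is a lower bound on $a\cdot D_0 a$ — presumably obtained by exhibiting a test function, or by a compactness/ellipticity argument using the finite-range and self-adjointness hypotheses — followed by a Lipschitz-in-$\beta(\e)$ stability estimate for the whole variational expression. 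Controlling that stability estimate requires uniform (in $\e$) bounds on $(-L_{\rm ew}^{(\e)})^{-1}$ restricted to the mean-zero subspace, which is where the Poincaré inequality with the strictly positive gap $\g-\e$ from Assumption \ref{asso3} does the work; the bookkeeping of how $\mu_\e$ versus $\mu$ enters the quadratic forms, together with the higher-moment input, is the place where the argument becomes technical.
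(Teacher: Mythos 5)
For part (i) your overall plan (write $X^{(\e)}_t$ as jump--martingale plus additive functional of the environment seen from the walker, then use Proposition \ref{gelem_gelem}) matches the paper's, but the step ``combining with the martingale CLT for $M_t/\sqrt t$ \dots and checking joint convergence'' is exactly where the content lies and is left unaddressed. The two pieces are strongly correlated (the corrector $g$ is built from $j^{(\e)}$, which also drives the compensator of the jump martingale), so proving two marginal CLTs does not identify the limit covariance. The paper's resolution is to \emph{not} take a CLT for the additive functional separately: it solves the Poisson equation $f^i=-\cL_\e g^i$, writes $\int_0^t f^i(\eta_s)ds=\tilde M^{(i)}_t+R^i_t$, absorbs $\tilde M_t$ into the jump martingale $\hat M_t$ to form a single square--integrable martingale $M_t=\hat M_t+\tilde M_t$ with stationary increments, applies the martingale functional CLT once (via the ergodic theorem for $\langle M^{(i)}\pm M^{(j)}\rangle_n/n$, which requires the additivity \eqref{fucile} and $\s(\eta_\cdot)$--measurability of the bracket, both of which take some work to establish for the filtration enlarged by the Poisson clock and uniform marks), and disposes of $R_t$ by Lemma \ref{monnezza}. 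If you intend your sketch to be completed, this single--martingale route is the one to take.

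For part (ii) there is a more substantive obstruction: the Kipnis--Varadhan variational characterization of the diffusion matrix that you propose to perturb is available only for \emph{reversible} dynamics, and the perturbed generator $L^{(\e)}_{\rm ew}=L_{\rm ew}+\hat L_\e$ is not self--adjoint (only $L_{\rm ew}$ is, under \eqref{DetailedBal}). So there is no variational expression for $a\cdot D_\e a$ to which a ``Lipschitz in $\beta(\e)$'' stability estimate could be applied, and the corrector term $-2\langle j^{(\e)},(-L^{(\e)}_{\rm ew})^{-1}j^{(\e)}\rangle$ you write down is not the right object in the non--symmetric setting. The paper circumvents this entirely: it never produces a formula for $D_\e$, but lower--bounds $\mathrm{Var}_{\mu_\e}[X^{(\e)}_T\cdot e]$ by the sum of squared increments of the discrete--time Doob martingale $M^T_n=\cE_\eta[X^{(\e)}_T\cdot e\,|\,\cH_n]-\cE_\eta[X^{(\e)}_T\cdot e]$, compares each increment to its unperturbed counterpart using the explicit coupling of Section \ref{san_valentino} (giving errors of order $c(\e)^{1/2}$ and $\e/(\g-\e)$, which is where the $n=4$ moment condition is consumed through fourth--moment bounds on $A_1$ and $B_{T-n}$), and only then invokes the variational formula \eqref{eq:varformula} --- legitimately, for the reversible \emph{unperturbed} walk --- to show $\langle e,D_0e\rangle>0$. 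Even there, the non--degeneracy of $D_0$ is not by a test function or ellipticity: it follows from the elementary inequality $(a+b)^2\geq \beta a^2-\tfrac{\beta}{1-\beta}b^2$, choosing $\beta$ small enough that the negative contribution of the corrector is absorbed by the Dirichlet form via the spectral gap $\g$. Without some replacement for the missing variational formula for $D_\e$, your proposed argument for (ii) does not go through.
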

The proof of Theorem \ref{QCLT} is given in Sections \ref{RWproof_pera} and \ref{NonDeg}. 

\section{$L^2$--perturbation of stationary Markov processes}\label{viva_segovia}

{As already mentioned, the  derivation of the results presented in Section \ref{applications} is based - between others - on a perturbative approach. In this section,  starting from the 
Dyson--Phillips expansion of the Markov semigroup,   we derive some results on perturbations of stationary  Markov processes satisfying the Poincar\'e inequality. We will focus on the  
perturbed invariant distribution, the perturbed Markov  semigroup, the LLN and invariance principle for additive functionals of the perturbed process. We have stated these results in full generality, while at the beginning of  Section \ref{RWproof_mela} we explain how the random walks in dynamic random environments analyzed in Section \ref{applications} fit {into} this general scheme.}

We fix a metric space $\O$, which is thought of as a measurable space endowed with the $\s$--algebra of its Borel sets.  We consider a Markov process  with state space $\O$  and 
with c\`adl\`ag paths  in the {Skorokhod} space  $D( \bbR_+; \O)$.  
We write $( \eta_t)_{t \in \bbR_+}$ for a generic path,  denote by $\bbP_\nu$ the law  on $D( \bbR_+;\O)$ of the process with initial distribution $\nu$, and  by $\bbE_\nu$ the associated expectation. If $\nu= \d_\eta$, $\eta \in \O$, we simply write $\bbP_\eta$, $\bbE_\eta$.
We suppose the process to have an \emph{invariant distribution} $\mu$ on $\O$. Then the  family of  operators $ S(t)f (\eta):= \bbE_\eta \bigl[ f(\eta_t) \bigr]$, $t \in \bbR_+$, gives 
a contraction semigroup in $L^2(\mu)$, which is indeed strongly  continuous\footnote{Strongly continuous semigroup are often called \emph{$C_0$--semigroups}} 
in $L^2(\mu)$ (see Lemma \ref{SC} in Appendix).  We write $L$ for its infinitesimal generator  (in $L^2(\mu)$) and $\cD(L)$ for the corresponding domain.  In what follows we denote by $\|\cdot \|$ the norm in $L^2(\mu)$ and by $\mu(f)$ the $\mu$--expectation of an arbitrary function $f$.
We assume that $L$ satisfies the \emph{ Poincar\'e inequality}, i.e.\@ for some $\g>0$
 \begin{equation}\label{poincare}
\g \|f\|^2 \leq - \mu ( f Lf ) \qquad \forall f \in \cD (L) \text{ with } \mu(f)=0\,.
\end{equation}
Note that  the above Poincar\'e inequality  is equivalent to the bound (cf.\@ Lemma \ref{equitalia} in Appendix)
  \begin{equation}\label{banana}
 \| S (t) f - \mu(f) \|\leq e^{-\g t} \|f -\mu(f) \| \, \qquad \forall t \geq 0 \,, \; f \in L^2(\mu)\,.
\end{equation}
If $\mu$ is reversible w.r.t.\@ $L$, then \eqref{poincare} corresponds to requiring that $L$ has  spectral gap bounded by $\g$ from below.

\medskip

 Next, for a given fixed parameter  $\e>0$, we consider a new Markov process on  $\O$ and call  $\bbP_\nu^{(\e)}$ its law on $D( \bbR_+; \O)$ when starting with distribution $\nu$, and $\bbE_\nu^{(\e)}$ the associated expectation. 
In the sequel we refer to this new Markov process as the \emph{perturbed process}.
We introduce a bounded operator $\hat{L}_\e : L^2(\mu) \to L^2(\mu)$, with $\e:= \|\hat{L}_\e\|$, and set
 \begin{equation}\label{turbato}
 L_\e:=L+\hat{L}_\e\,,\qquad \cD(L_\e):= \cD(L)\,.
 \end{equation} 
 It is known (cf.\@ \cite[Thm. 1.3, Chp. III]{EN}) that the operator $L_\e = L + \hat L_\e$ with domain $D(L_\e)= D(L)$ is the generator of a strongly continuous semigroup $(S_\e (t) ) _{t \geq 0}$ on  $L^2 (\mu)$. Moreover, it holds $ S_\e (t) = e^{t L_\e}$, where the exponential of the operator $L_\e$ is defined in \cite[Ch. IX, Sec. 4]{K} (cf.\@ Problem 49 in \cite{RS2}[Ch. X]).

We fix our basic assumptions:
\begin{assumptionA}\label{ferragosto} The unperturbed Markov process has invariant and ergodic distribution $\mu$. The  generator $L$  of the    $L^2(\mu)$--semigroup $S(t)$, $t \in \bbR_+$,  satisfies  the Poincar\'e inequality \eqref{poincare}. Moreover,  considering the semigroup $S_\e (\cdot)$ with generator $L_\e=L+ \hat L_\e$ and the perturbed Markov process, 
it holds \begin{equation}\label{ghiacciolo}
S_\e (t) f (\eta)= \bbE^{(\e)}_\eta \bigl( f(\eta_t) \bigr) \,, \qquad \mu{\rm -a.s.}\,, \qquad \forall f \in C_b(\O)\,,
\end{equation}
 where 
we denote by $C_b(\O)$ the space of bounded continuous real functions on $\O$.
\end{assumptionA}

\begin{remark}\label{compleanno} The above ergodicity of $\mu$ has to be thought w.r.t.\@ time translations, i.e.\@ any Borel set $A \subset D(\bbR_+, \O)$ which is left invariant by any time translation\footnote{Time translation $\theta_t : D(\bbR_+, \O) \to D( \bbR_+, \O)$ is defined as $(\theta_t \eta)_s := \eta_{t+s}$.} $\theta_t$ has $\bbP_{\mu}$--probability equal to $0$ or $1$. Due to Theorem 6.9 in \cite{V} (cf.\@ also \cite[Chapter  IV]{Ros}), this is equivalent to the following fact: $\mu (B)\in \{0,1\}$ if $B$ is a Borel subset of $\O$ such that 
     $ \mathds{1}_B(\eta_0)= \mathds{1}_B(\eta_t) $  $\bbP _{\mu}$--a.s.\@ for any $t \geq 0$.  
     Note that for such a subset $B$ it holds $S(t) \mathds{1}_B = \mathds{1}_B$ $\mu$--a.s.. This observation allows to deduce the ergodicity of $\mu$ from the bound \eqref{banana}, since we assume that $S(\cdot)$ satisfies  the Poincar\'e inequality. Hence, the explicit hypothesis of $\mu$ ergodic could be removed from Assumption \ref{ferragosto}.
\end{remark}
 
In the following lemma we discuss a case, useful in applications, where the above property \eqref{ghiacciolo} is fulfilled (the proof is postponed to Section \ref{golden_gate}). The lemma  covers numerous applications, e.g. interacting particle systems (cf. \cite{L}, in particular Chp. IV.4 there):
\begin{Lemma}\label{ponte} Suppose that $\O$ is compact and that the perturbed Markov process is Feller on $C(\O)$ endowed with the uniform norm. Consider the induced Markov semigroup $\tilde S_\e(t) $, $ t \in \bbR_+$, on $C(\O)$: $ \tilde S_\e(t) f (\eta):=\bbE^{(\e)}_\eta \bigl( f(\eta_t) \bigr) $ for $f \in C(\O)$. Call  $\tilde L_\e: \cD( \tilde L_\e ) \subset C(\O) \to C(\O)$  its infinitesimal generator. Suppose that $\tilde{L}_\e$
has  a core $\cC_\e \subset \cD( \tilde L_\e) \cap \cD(L_\e)$ such that    $\tilde L_\e f =  L_\e f$ for all 
$f\in \cC_\e  $. Then  identity \eqref{ghiacciolo}  is satisfied.
\end{Lemma}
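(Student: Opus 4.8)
The plan is to show that the two semigroups $\tilde S_\e(\cdot)$ on $C(\O)$ and $S_\e(\cdot)$ on $L^2(\mu)$ agree on the common core, and then use density and continuity to conclude. First I would recall the general uniqueness principle for $C_0$--semigroups: if $(T(t))_{t\ge 0}$ is a $C_0$--semigroup on a Banach space with generator $A$, and $\cC$ is a core for $A$, then $T(\cdot)$ is the unique $C_0$--semigroup whose generator extends $A|_\cC$. The idea is that $S_\e(\cdot)$ and $\tilde S_\e(\cdot)$ both "contain" the operator $L_\e|_{\cC_\e} = \tilde L_\e|_{\cC_\e}$, but they live on different spaces, so a direct appeal to uniqueness on one fixed Banach space is not immediate; the two semigroups have to be compared through the common dense domain $\cC_\e$.

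The concrete route I would follow is via the resolvent, or equivalently via a Laplace-transform/duality identity. Fix $f \in \cC_\e$. On one hand, $t \mapsto \tilde S_\e(t) f$ solves the $C(\O)$--valued Cauchy problem $\tfrac{d}{dt} u(t) = \tilde L_\e u(t) = L_\e u(t)$ (the last equality holding as long as $u(t)$ stays in $\cC_\e$, which it does not in general — this is the delicate point, see below). On the other hand $t \mapsto S_\e(t) f$ solves the $L^2(\mu)$--valued Cauchy problem $\tfrac{d}{dt} v(t) = L_\e v(t)$ with $v(0) = f$. The cleanest way to avoid the "$\cC_\e$ is not invariant" issue is to integrate against the resolvent: for $\lambda$ large, set $g := \int_0^\infty e^{-\lambda t} \tilde S_\e(t) f\, dt \in \cD(\tilde L_\e)$, so that $(\lambda - \tilde L_\e) g = f$. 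Since $\cC_\e$ is a core for $\tilde L_\e$, pick $g_k \in \cC_\e$ with $g_k \to g$ and $\tilde L_\e g_k \to \tilde L_\e g$ in $C(\O)$; then $\tilde L_\e g_k = L_\e g_k$, and because $C(\O) \subset L^2(\mu)$ continuously (as $\mu$ is a probability measure on compact $\O$) and $L_\e$ is closed on $L^2(\mu)$, we get $g \in \cD(L_\e)$ and $L_\e g = \tilde L_\e g$, hence $(\lambda - L_\e) g = f$ in $L^2(\mu)$. Therefore $g = (\lambda - L_\e)^{-1} f = \int_0^\infty e^{-\lambda t} S_\e(t) f\, dt$. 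Thus the Laplace transforms of $t\mapsto \tilde S_\e(t) f$ and $t \mapsto S_\e(t)f$ coincide as elements of $L^2(\mu)$ for all large $\lambda$; by injectivity of the Laplace transform and continuity in $t$ of both sides (in $L^2(\mu)$), we conclude $\tilde S_\e(t) f = S_\e(t) f$ in $L^2(\mu)$ for every $t \ge 0$ and every $f \in \cC_\e$.

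To finish, I would extend from $\cC_\e$ to all of $C_b(\O) = C(\O)$. Since $\cC_\e$ is a core it is dense in $C(\O)$ in the uniform norm, hence dense in $L^2(\mu)$. Given $f \in C(\O)$, take $f_k \in \cC_\e$ with $f_k \to f$ uniformly, so $f_k \to f$ in $L^2(\mu)$ as well; then $S_\e(t) f_k \to S_\e(t) f$ in $L^2(\mu)$ (contraction semigroup), while $\tilde S_\e(t) f_k \to \tilde S_\e(t) f$ uniformly, hence in $L^2(\mu)$. Passing to the limit in $\tilde S_\e(t) f_k = S_\e(t) f_k$ gives $\tilde S_\e(t) f = S_\e(t) f$ in $L^2(\mu)$, i.e.\ $S_\e(t) f(\eta) = \tilde S_\e(t) f(\eta) = \bbE^{(\e)}_\eta(f(\eta_t))$ for $\mu$--a.e.\ $\eta$, which is precisely \eqref{ghiacciolo}.

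The main obstacle is the one flagged above: one cannot simply say "$\tfrac{d}{dt}\tilde S_\e(t) f = L_\e \tilde S_\e(t) f$ and invoke uniqueness of solutions to the $L^2$ Cauchy problem", because $\tilde S_\e(t) f$ need not lie in $\cC_\e$ (nor even in $\cD(L_\e)$) for $t > 0$, so the identity $\tilde L_\e = L_\e$ is only known on $\cC_\e$. Routing the argument through the resolvent $g = \int_0^\infty e^{-\lambda t}\tilde S_\e(t) f\,dt$ and using that $\cC_\e$ is a \emph{core} for $\tilde L_\e$ (so that the approximating sequence $g_k$ controls both $g_k$ and $\tilde L_\e g_k$) together with closedness of $L_\e$ on $L^2(\mu)$ is what circumvents this; everything else is routine functional analysis (continuity of the embedding $C(\O)\hookrightarrow L^2(\mu)$, injectivity of the Laplace transform, density of a core).
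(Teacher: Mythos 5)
Your argument is correct, and it shares with the paper's proof the one essential ingredient: using the core property of $\cC_\e$ together with the closedness of $L_\e$ in $L^2(\mu)$ (and the continuity of the embedding $C(\O)\hookrightarrow L^2(\mu)$) to transfer the identity $\tilde L_\e=L_\e$ from $\cC_\e$ to a larger set. Where you diverge is in what you transfer it to and how you then identify the semigroups. The paper first proves the stronger claim that $\cD(\tilde L_\e)\subset \cD(L_\e)$ with $\tilde L_\e f=L_\e f$ for \emph{every} $f\in\cD(\tilde L_\e)$ (by exactly the approximation-plus-closedness argument you apply to $g$); since $\tilde S_\e(t)f$ remains in $\cD(\tilde L_\e)$ for all $t$, the curve $t\mapsto \tilde S_\e(t)f$ then solves the $L^2(\mu)$-valued Cauchy problem $\varphi'=L_\e\varphi$, $\varphi(0)=f$, and uniqueness for that abstract ODE gives $\tilde S_\e(t)f=S_\e(t)f$. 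So the obstacle you flag as "the delicate point" -- that $\tilde S_\e(t)f$ need not lie in $\cC_\e$ or even in $\cD(L_\e)$ -- is not actually an obstacle: the core argument upgrades the generator identity to all of $\cD(\tilde L_\e)$, which is invariant under $\tilde S_\e(t)$. Your workaround, applying the core argument only to the resolvent element $g=\int_0^\infty e^{-\l t}\tilde S_\e(t)f\,dt$ and concluding via $(\l-L_\e)g=f$, injectivity of $\l-L_\e$ for $\l$ above the growth bound of $S_\e$, and uniqueness of vector-valued Laplace transforms, is equally valid and arguably more self-contained (it avoids invoking uniqueness for the abstract Cauchy problem, at the cost of the Laplace-transform machinery). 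Either way the final density step extending from $\cC_\e$ to $C(\O)$ is the same as in the paper, which extends from $\cD(\tilde L_\e)$ to $C(\O)$.
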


We recall, cf.\@ \cite[Cor. 1.7 and Eq. (IE$^{*}$), Chp. III]{EN},   the  so called \emph{variation of parameters formula}: for any $ f \in L^2(\mu)$ it holds 
\begin{equation}\label{dyson}
\begin{split}
S_\e(t)f  & =S (t) f + \int_0^t S(t-s) \hat L_\e S_\e(s)f  ds \\
&  =S (t) f + \int_0^t  S_\e  (s) \hat L_\e S(t-s)f  ds\,,
\end{split}
\end{equation}
where the above integrals have to be understood in $L^2(\mu)$.


Given $t \geq 0$ we define iteratively the operators $S_\e ^{(n)}(t)$ as 
\begin{equation}\label{Sn}
S_\e^{(0) }(t):= S(t), \quad S_\e^{(n+1)} (t):=\int_0^t  S(t-s) \hat L_\e S_\e^{(n)}(s) ds\, =\int_0^t S_\e^{(n)}(s)\hat L_\e  S(t-s)  ds \,.
\end{equation}
The equivalence of the two forms of $S_\e^{(n+1)}$ in \eqref{Sn} can be checked by induction {(see  \cite[App. A]{ABFv2})}. As explained in \cite[Chp. III]{EN}, $S_\e ^{(n)} (\cdot)$ is a continuous function from $\bbR_+$ to the space $ \cL ( L^2(\mu))$ of bounded operators in $L^2(\mu)$. Moreover, the \emph{Dyson--Phillips expansion} holds: 
\begin{equation}\label{DP}
S_\e (t)= \sum_{n=0}^\infty S_\e ^{(n)}(t) \,,  \qquad t \geq 0 \,,
\end{equation}
where the series converges in the operator norm  of $ \cL ( L^2(\mu))$, even uniformly as $t$ varies in a {bounded} interval.


 By means of the Poincar\'e inequality, we can derive more information on the Dyson--Phillips expansion and on the semigroup $(S_\e(t) )_{t \geq 0}$:
 \begin{Proposition}[Dyson--Phillips expansion]\label{spezzatino} Let $\e <\g$, for any $f \in L^2(\mu)$ and $t\geq 0$ it holds
\begin{equation}\label{volare}
 \| S_\e(t) f - \sum _{n=0}^{k-1} S_\e^{(n)} (t) f \|\leq  (\e/\gamma )^{k} \left(\frac{2\g}{\g- \e }\right)  \| f - \mu(f) \|  \,, \qquad \forall k \geq 1.
 \end{equation}
 \end{Proposition}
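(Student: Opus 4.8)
The plan is to bound each term $S_\e^{(n)}(t)$ of the Dyson--Phillips expansion when applied to a mean--zero function, and then sum the tail of the geometric series. First I would reduce to the mean--zero case: writing $g = f - \mu(f)$, observe that $S(t)$ fixes constants (it is Markovian), so $S_\e^{(1)}(t)(\text{const}) = \int_0^t S(t-s)\hat L_\e S(s)(\text{const})\,ds = \int_0^t S(t-s)\hat L_\e (\text{const})\,ds$, and inductively all the terms $S_\e^{(n)}(t)$ for $n \geq 1$ annihilate constants (provided $\hat L_\e$ maps mean--zero to mean--zero, or more precisely one carries along the decomposition carefully). Actually the cleanest route is to keep track of means: since $\mu$ is invariant for $S(\cdot)$ one has $\mu(S(t)h) = \mu(h)$, so the content of the estimate is really about how $S(t-s)\hat L_\e$ acts, and the key algebraic fact is that $S_\e^{(n)}(t)f$ for $n\ge 1$ depends on $f$ only through... hmm, this needs care. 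Let me instead bound directly in $L^2$.

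The key estimate is the following: for any $h \in L^2(\mu)$ with $\mu(h) = 0$ and any $n \geq 1$,
\begin{equation}\label{keyest}
\| S_\e^{(n)}(t) h \| \leq \left(\frac{\e}{\g}\right)^{n-1} \e\, t\, e^{-\g t}\, \frac{t^{n-1}}{(n-1)!}\,\|h\| \quad\text{?}
\end{equation}
Wait — a cleaner and sufficient bound: I claim $\|S_\e^{(n)}(t)h\| \le (\e/\g)^n \|h\|$ uniformly in $t$, for $h$ mean--zero. To see this, argue by induction on $n$. For $n=0$ it is the contraction (or \eqref{banana}) bound $\|S(t)h\| \le e^{-\g t}\|h\| \le \|h\|$; more usefully we keep the factor $e^{-\g t}$. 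For the inductive step, use the recursion $S_\e^{(n+1)}(t) h = \int_0^t S(t-s)\hat L_\e S_\e^{(n)}(s) h\, ds$. Here $S_\e^{(n)}(s)h$ is mean--zero (this must be checked: $\mu(S_\e^{(n)}(s)h)$ — using $\mu S(t) = \mu$ and that $\mu \hat L_\e$ applied to things... the honest statement is that $\mu(\hat L_\e k)$ need not vanish, so instead one checks that $\mu(S_\e^{(n)}(s)h) = 0$ follows from $\mu(h)=0$ by a separate induction, or one simply absorbs a $\mu(f)$ correction as the proposition's statement already subtracts $\mu(f)$). Then $\hat L_\e S_\e^{(n)}(s)h$ has norm $\le \e (\e/\g)^n \|h\|$ by the operator norm bound $\|\hat L_\e\| = \e$ and the inductive hypothesis; applying $S(t-s)$ to it, if its mean vanishes we get the factor $e^{-\g(t-s)}$, and then $\int_0^t e^{-\g(t-s)}\,ds \le 1/\g$, yielding $\|S_\e^{(n+1)}(t)h\| \le (\e/\g)^{n+1}\|h\|$.

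Granting \eqref{keyest} in the form $\|S_\e^{(n)}(t)h\| \le (\e/\g)^n \|h\|$ for mean--zero $h$ and $n\ge 0$, the proof of \eqref{volare} concludes by applying the Dyson--Phillips expansion \eqref{DP} to $h = f - \mu(f)$. Since each $S_\e^{(n)}(t)$ with $n\ge 1$ kills constants, $S_\e^{(n)}(t)f = S_\e^{(n)}(t)h$ for $n \ge 1$, and for $n=0$, $S_\e^{(0)}(t)f - S_\e^{(0)}(t)h = S(t)\mu(f) = \mu(f)$ is constant; combined with the fact that $S_\e(t)$ preserves the mean (so $\mu(S_\e(t)f) = \mu(f)$ and the ``$S_\e(t)f$'' part of the bound is consistent), one gets $S_\e(t)f - \sum_{n=0}^{k-1}S_\e^{(n)}(t)f = \sum_{n=k}^\infty S_\e^{(n)}(t)h$ for $k \ge 1$, and hence
\begin{equation}
\Bigl\| S_\e(t)f - \sum_{n=0}^{k-1} S_\e^{(n)}(t)f \Bigr\| \le \sum_{n=k}^\infty (\e/\g)^n \|h\| = (\e/\g)^k \frac{1}{1-\e/\g}\|h\| = (\e/\g)^k \frac{\g}{\g-\e}\|f-\mu(f)\|,
\end{equation}
which is even slightly sharper than \eqref{volare} (the stated constant $2\g/(\g-\e)$ leaves room, presumably to absorb the mean--tracking subtleties). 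The main obstacle, and the place requiring genuine care rather than routine work, is the bookkeeping of means: making precise that $S_\e^{(n)}(t)$ for $n\ge 1$ annihilates constants and that the relevant functions to which one applies $S(t-s)$ are mean--zero so that the contraction factor $e^{-\g(t-s)}$ (rather than just $1$) is available — and if one is less careful about exact mean--zero-ness, one picks up the extra correction terms that the factor $2$ in the constant $2\g/(\g-\e)$ is there to absorb. Everything else is the geometric summation of the Poincaré decay estimate along the iterated Duhamel formula.
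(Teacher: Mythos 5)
Your overall strategy (bound each Dyson--Phillips term, sum the geometric tail) is the same as the paper's, and your reduction to $h=f-\mu(f)$ via ``$S_\e^{(n)}(t)$ kills constants for $n\ge1$'' is fine (it uses $\hat L_\e \mathds{1}=0$, cf.\@ Remark \ref{luna_piena}). But the key estimate $\|S_\e^{(n)}(t)h\|\le(\e/\g)^n\|h\|$ for mean--zero $h$ is not established, and the inductive step as you wrote it is invalid. The step requires that $S_\e^{(n)}(s)h$ (equivalently $\hat L_\e S_\e^{(n)}(s)h$, before applying $S(t-s)$) be mean--zero so that the Poincar\'e factor $e^{-\g(t-s)}$ is available; but $\mu(\hat L_\e g)$ does \emph{not} vanish in general --- indeed $\int_0^\infty \mu(\hat L_\e S_\e^{(n)}(s)f)\,ds$ is exactly the $n$--th correction building up $\mu_\e\neq\mu$ in \eqref{muinfinity_bis}. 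You flag this yourself (``the honest statement is that $\mu(\hat L_\e k)$ need not vanish'') and then hope the factor $2$ absorbs it, but you never supply the missing argument, and your final constant $\g/(\g-\e)$ is ``sharper'' than the stated $2\g/(\g-\e)$ precisely because the nonzero means have been dropped.

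The correct repair --- which is what the paper does in Proposition \ref{prop_pert} --- is to run a \emph{two--component} induction: (a) bound the fluctuation $\|g_{n}(t)-\mu(g_{n}(t))\|\le e^{-\g t}\frac{(\e t)^{n-1}}{(n-1)!}\|f-\mu(f)\|$, which is legitimate because inside the Duhamel integral one may center $\hat L_\e g_n(s)$ at its mean before applying the Poincar\'e decay of $S(t-s)$ (and $\|\hat L_\e(g-\mu(g))\|\le\e\|g-\mu(g)\|$ closes the induction); (b) bound the mean separately by writing $\mu(g_{n+1}(t))=\int_0^t\mu(\hat L_\e g_n(s))\,ds$ and integrating the bound from (a), giving $|\mu(g_{n+1}(t))|\le(\e/\g)^n\|f-\mu(f)\|$. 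Each Dyson term is then controlled by the sum of the two bounds, $e^{-\g t}\frac{(\g t)^n}{n!}(\e/\g)^n+(\e/\g)^n\le 2(\e/\g)^n$ times $\|f-\mu(f)\|$, and the geometric tail gives exactly $(\e/\g)^k\frac{2\g}{\g-\e}\|f-\mu(f)\|$. So the factor $2$ is not slack to be ``presumably'' absorbed: it is the price of the genuinely nonzero means, and tracking them is the missing piece of your argument.
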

 The above proposition is proven in Section \ref{primino}.

\begin{TheoremA}[Invariant measure]\label{teo_invariante}
Let   Assumption \ref{ferragosto} be satisfied and let $\e <\g$. Then  there exists a probability measure $\mu_\e$ on $\O$ with the following properties: 
 \begin{enumerate}[(i)]

 
 \item\label{a} Consider the  perturbed Markov process with initial distribution $\nu$   absolutely continuous w.r.t.\@ $\mu$. 
 Then its  distribution at time $t$ weakly converges to   $\mu_\e$ as $t \to \infty$. 
  \item\label{iii}  For each $f \in L^2(\mu)$ {it holds}   
  \begin{equation}\label{muinfinity_bis}
\mu_\e(f) = \mu(f)+\sum _{n=0}^\infty \int_0^\infty \mu\left( \hat L_\e S_\e^{(n)}(s) f \right) ds\,,
\end{equation}
{where
\begin{equation}\label{ikea} 
\int_0^\infty \bigl|  \mu\left( \hat L_\e S_\e^{(n)}(s)f \right) \bigr| ds
\leq    (\e  /\g)^{n+1} \| f-\mu(f) \|\,.
\end{equation} 
} 
Moreover, for $t\geq 0$, the following estimates hold:
\begin{align}
& \| S_\e(t) f - \mu( S_\e(t)f ) \|\leq e^{-(\g-\e )t} \| f-\mu(f)\| \,,  \label{convergencetomean}\\
& \bigl|\mu( S_\e(t)f )  - \mu_\e (f) \bigr| \leq \frac{ \e}{\g-\e}   e^{-(\g-\e )t} \| f-\mu(f)\|\,,\label{barbiere}\\
& 
\left|\mu_\e(f)-\mu(f)\right|\leq
\frac{\e}{\g-\e} \|f-\mu(f)\|\,.\label{benedicte}
\end{align}

 \item\label{ii} $\mu_\e$ is the unique { distribution  which  is both absolutely continuous  w.r.t.\@ $\mu$ and  invariant  for the perturbed Markov process}. The    Radon--Nykodim derivative $h_\e :=d\mu_\e /d\mu$ belongs to $L^2(\mu)$ 
 and admits the expansion\footnote{We denote by $A^*$ the adjoint of the operator $A$ on $L^2(\mu)$} 
 \begin{equation}\label{expansiong}
h_\e=\mathds{1}+\sum_{n=1}^\infty\int_0^\infty H^{(n)}_\e(t)\mathds{1} dt,
\end{equation}
where $H^{(n)}_\e(t) := [S_\e^{(n-1)}(t)]^*  \hat L_\e^*,\, n\geq 1,$ are bounded operators on $L^2(\mu)$ satisfying the recursion:
\begin{equation}\label{defH}
H^{(n+1)}_\e(t)=\int_0^t ds\,H^{(n)}_\e(s)S^*(t-s)\hat L_\e^*=\,\int_0^t ds\,S^*(t-s)\hat L_\e^*H^{(n)}_\e(s).
\end{equation}
Moreover,  it holds  $\| h_\e-\mathds{1}\| \leq \frac{\e}{\g-\e}$.

\item\label{v}  Suppose that for  any  $t>0$  and for any  measurable   $B \subset \O$  it holds 
\begin{equation}\label{ho_fame}
\mu \bigl( \{  \eta \in B^c \,:\, \bbP_\eta^{(\e) } ( \eta_t \in B)= 0  \text{ and }\bbP_\eta ( \eta_t \in B)>0 \}\bigr)=0\,.
\end{equation}
 Then  also $\mu$ is absolutely continuous w.r.t.\@ $\mu_\e$.

 \item\label{iv} For any  $f\in L^\infty (\mu)$ it holds
  \begin{equation}\label{arachidi_bis}
\| S_\e(t) f - \mu_\e(f) \|_{\e} \leq   \bigl( \frac{\g}{\g-\e} \bigr) ^{3/2}e^{ -\frac{ \g- \e }{2}  t }   \| f-\mu(f) \|_\infty \,, \qquad t \geq 0\,,
\end{equation}
where $\| \cdot\|_\e $, $\| \cdot \|_\infty$  denote the norm in $L^2(\mu_\e)$ and $L^\infty(\mu)$ respectively.

\item \label{i} $\mu_\e $ is ergodic w.r.t.\@ time--translations, as in Remark \ref{compleanno}.
 \end{enumerate}

\end{TheoremA}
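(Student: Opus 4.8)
The plan is to invoke the same mechanism used in Remark \ref{compleanno} to deduce ergodicity of $\mu$ from a Poincaré-type inequality, but now applied to the perturbed semigroup $S_\e(t)$ on $L^2(\mu_\e)$. By the characterization recalled in Remark \ref{compleanno} (via Theorem 6.9 in \cite{V}), it suffices to show: if $B \subset \O$ is Borel and $\mathds{1}_B(\eta_0) = \mathds{1}_B(\eta_t)$ $\bbP^{(\e)}_{\mu_\e}$-a.s.\ for all $t \geq 0$, then $\mu_\e(B) \in \{0,1\}$. For such a $B$, one has $S_\e(t)\mathds{1}_B = \mathds{1}_B$ in $L^2(\mu_\e)$, and hence also $\mu_\e$-a.s. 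Since $\mu_\e$ and $\mu$ are comparable in the sense that $\mu_\e \ll \mu$ with density $h_\e \in L^2(\mu)$ (by item (iii)), the function $\mathds{1}_B$ also lies in $L^2(\mu)$, and the fixed-point equation $S_\e(t)\mathds{1}_B = \mathds{1}_B$ holds $\mu_\e$-a.s.

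The key step is then to exploit the exponential convergence estimate \eqref{arachidi_bis}: for any $f \in L^\infty(\mu)$,
\begin{equation*}
\| S_\e(t) f - \mu_\e(f) \|_\e \leq \Bigl( \tfrac{\g}{\g-\e} \Bigr)^{3/2} e^{-\frac{\g-\e}{2} t} \| f - \mu(f) \|_\infty \,, \qquad t \geq 0\,.
\end{equation*}
Applying this with $f = \mathds{1}_B \in L^\infty(\mu)$ and using $S_\e(t)\mathds{1}_B = \mathds{1}_B$ ($\mu_\e$-a.s., hence in $L^2(\mu_\e)$) gives
\begin{equation*}
\| \mathds{1}_B - \mu_\e(B) \|_\e \leq \Bigl( \tfrac{\g}{\g-\e} \Bigr)^{3/2} e^{-\frac{\g-\e}{2} t} \| \mathds{1}_B - \mu(B) \|_\infty \,.
\end{equation*}
The right-hand side is finite (bounded by a constant times $e^{-\frac{\g-\e}{2}t}$) and tends to $0$ as $t \to \infty$, while the left-hand side does not depend on $t$. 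Hence $\| \mathds{1}_B - \mu_\e(B) \|_\e = 0$, which forces $\mathds{1}_B = \mu_\e(B)$ $\mu_\e$-a.s.; since $\mathds{1}_B$ takes only the values $0$ and $1$, this is possible only if $\mu_\e(B) \in \{0,1\}$.

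The only delicate point is making sure that the a.s.\ identity $\mathds{1}_B(\eta_0) = \mathds{1}_B(\eta_t)$ under $\bbP^{(\e)}_{\mu_\e}$ does translate into the operator identity $S_\e(t)\mathds{1}_B = \mathds{1}_B$ in $L^2(\mu_\e)$, i.e.\ that $\mu_\e$-a.s.\ $\bbE^{(\e)}_\eta[\mathds{1}_B(\eta_t)] = \mathds{1}_B(\eta)$. This follows from \eqref{ghiacciolo} in Assumption \ref{ferragosto} together with a standard approximation of $\mathds{1}_B$ by bounded continuous functions (in $L^2(\mu)$, hence in $L^2(\mu_\e)$ since $h_\e \in L^2(\mu)$), exactly as in the reasoning of Remark \ref{compleanno}; the invariance of $\mu_\e$ under $S_\e(t)$ ensures the conditional expectation is again $\mathds{1}_B$ $\mu_\e$-a.s. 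I do not expect any real obstacle here beyond this routine measure-theoretic bookkeeping.
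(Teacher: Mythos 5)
Your proposal is correct and follows essentially the same route as the paper: the paper's proof of item (vi) also passes from the $\bbP^{(\e)}_{\mu_\e}$--a.s.\ invariance of $\mathds{1}_B$ to the fixed--point identity $S_\e(t)\mathds{1}_B=\mathds{1}_B$ $\mu_\e$--a.s.\ (the ``delicate point'' you flag is exactly the content of Lemma \ref{candelina}, proved by the same $C_b$--approximation you sketch), and then lets $t\to\infty$ in \eqref{arachidi_bis} to force $\mathds{1}_B=\mu_\e(B)$ $\mu_\e$--a.s.
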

The proof of the above theorem is given in Section \ref{assoluto}

\begin{remark}\label{KOmark}
Theorem \ref{teo_invariante} presents some intersection with \cite[Thm. 2.2 and Thm. 4.1]{KO}. There the authors consider also unbounded perturbations satisfying some sector condition and the analysis is not based on the Dyson--Phillips expansion. In particular, in \cite{KO}  the content of Theorem \ref{teo_invariante}--(i) is obtained only for $\nu \ll \mu$ with $d\nu / d \mu \in L^2(\mu)$ (while here the last condition is absent). The existence of a unique invariant distribution $\mu_\e\ll \mu $ for the perturbed process is obtained also in \cite{KO} and our expansion \eqref{expansiong} is equivalent to the expansion (4.5) in \cite{KO}, {see \cite[Appendix B]{ABFv2}  for more details.} In  Theorem \ref{teo_invariante} we have collected information on the exponential  convergence  of semigroups (which is relevant to get the invariance principle in  Proposition \ref{gelem_gelem}), while in \cite{KO}  the exponential  convergence of densities is derived. \end{remark}

 

\begin{remark}\label{wurstel}
 Let $h_\e$ be the Radon--Nykodim derivative of $\mu_\e$ w.r.t.\@ $\mu$.  Let  $A \subset \O$   be a Borel set  such that $\mu_\e (A)=0$. Since $ 0=\mu_\e(A)= \mu(A) + \mu( (h_\e-1) \mathds{1}_A)$, 
we have
 $ \mu(A) = \mu( (1-h_\e) \mathds{1}_A) \leq \|\mathds{1}-h_\e\| \mu(A)^{1/2}$. Hence, by Theorem \ref{teo_invariante}-\eqref{ii}
 \begin{equation}\label{chitarra} \mu_\e (A)=0 \; \Rightarrow \;\mu(A) \leq \e^2/(\g-\e)^2\,.
 \end{equation}
 This implies that any property that holds $\mu_\e$--a.s.\@ holds also $\mu$--a.s.\@ if $\mu \ll \mu_\e$ and anyway, in the general case, holds for all $\eta \in \O$ with exception of a set of $\mu$--measure bounded by $\e^2/(\g-\e)^2$.
\end{remark}

We now concentrate on additive functionals for the perturbed process.
As an immediate consequence of Birkhoff ergodic theorem, Theorem \ref{teo_invariante} and \eqref{chitarra} in Remark \ref{wurstel}, we get:
 \begin{CorollaryA}[Law of large numbers]\label{lacho_drom} Let   Assumption \ref{ferragosto} be satisfied, let $\e< \g$ and 
let  $f : \O \to \bbR$ be a measurable function, nonnegative  or in $L^1(\mu_\e)$ (e.g. bounded or in $L^2(\mu)$). Then
\begin{equation}\label{malaguena} \lim _{t \to \infty} \frac{1}{t} \int_0^t f(\eta_s)= \mu_\e(f)\,, \qquad \bbP^{(\e)}_{\eta}-{\rm a.s.}
\end{equation}
for $\mu_\e$--a.e.\@ $\eta$  (recall Remark \ref{wurstel}). \end{CorollaryA}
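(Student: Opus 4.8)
The plan is to deduce Corollary~\ref{lacho_drom} from Birkhoff's ergodic theorem applied to the stationary perturbed process, then transfer the almost-sure statement from the stationary initial distribution $\mu_\e$ to the individual starting points $\eta$. First I would observe that, by Theorem~\ref{teo_invariante}-\eqref{ii}, $\mu_\e$ is an invariant probability measure for the perturbed Markov process, and by Theorem~\ref{teo_invariante}-\eqref{i} it is ergodic w.r.t.\ time-translations $\theta_t$ on $D(\bbR_+;\O)$. Hence the stationary process $\bbP^{(\e)}_{\mu_\e}$ is a stationary ergodic flow, and the continuous-time Birkhoff ergodic theorem applies: for any $g\in L^1(\bbP^{(\e)}_{\mu_\e})$ the time averages $\tfrac1t\int_0^t g\circ\theta_s\,ds$ converge $\bbP^{(\e)}_{\mu_\e}$-a.s.\ and in $L^1$ to $\bbE^{(\e)}_{\mu_\e}[g]$. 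Taking $g(\omega)=f(\omega_0)$, which lies in $L^1(\bbP^{(\e)}_{\mu_\e})$ precisely because $f\in L^1(\mu_\e)$ (or is nonnegative, in which case one uses the nonnegative version of Birkhoff and allows the value $+\infty$ consistently on both sides), gives
\begin{equation*}
\lim_{t\to\infty}\frac1t\int_0^t f(\eta_s)\,ds=\mu_\e(f)\,,\qquad \bbP^{(\e)}_{\mu_\e}\text{-a.s.}
\end{equation*}

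Next I would upgrade this to an a.s.\ statement under $\bbP^{(\e)}_\eta$ for $\mu_\e$-a.e.\ $\eta$. Writing $G\subset D(\bbR_+;\O)$ for the (measurable, $\theta$-invariant) set of paths on which the above limit holds and equals $\mu_\e(f)$, we have $\bbP^{(\e)}_{\mu_\e}(G)=1$, i.e.\ $\int \mu_\e(d\eta)\,\bbP^{(\e)}_\eta(G)=1$; since the integrand is bounded by $1$, it must equal $1$ for $\mu_\e$-a.e.\ $\eta$, which is exactly \eqref{malaguena}. Here I am using the (standard, and implicitly assumed throughout Section~\ref{viva_segovia}) disintegration $\bbP^{(\e)}_{\mu_\e}=\int\mu_\e(d\eta)\,\bbP^{(\e)}_\eta$ of the stationary law over initial conditions, which holds because $\mu_\e$ is the initial marginal.

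Finally, the reference to Remark~\ref{wurstel} in the statement is there to recall that, although \eqref{malaguena} is only asserted for $\mu_\e$-a.e.\ $\eta$, the exceptional $\mu_\e$-null set is also $\mu$-small: by \eqref{chitarra} it has $\mu$-measure at most $\e^2/(\g-\e)^2$, and it is genuinely $\mu$-null whenever $\mu\ll\mu_\e$ (e.g.\ under condition \eqref{ho_fame} of Theorem~\ref{teo_invariante}, or under the mutual absolute continuity conditions of Theorem~\ref{late}-(ii)). No separate argument is needed for this; it is a direct quotation of Remark~\ref{wurstel}.

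The only genuinely delicate point—hence the ``main obstacle''—is the integrability/measurability bookkeeping in the nonnegative case: one must make sure that $\mu_\e(f)=+\infty$ is handled consistently, using the monotone (nonnegative) form of the ergodic theorem, so that the conclusion is literally correct (both sides $+\infty$) rather than vacuous. Everything else is a routine invocation of Birkhoff plus a Fubini-type argument to pass from the stationary measure to a.e.\ starting point; the substantive inputs (invariance, ergodicity, the density estimate) have all already been established in Theorem~\ref{teo_invariante}.
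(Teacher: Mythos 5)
Your proposal is correct and follows exactly the route the paper intends: the paper states the corollary as an ``immediate consequence of Birkhoff ergodic theorem, Theorem \ref{teo_invariante} and \eqref{chitarra} in Remark \ref{wurstel}'', i.e.\ Birkhoff applied to the stationary ergodic law $\bbP^{(\e)}_{\mu_\e}$ (using the invariance and ergodicity of $\mu_\e$ established in Theorem \ref{teo_invariante}) followed by disintegration over the initial condition. Your write-up merely spells out the details the authors leave implicit, including the nonnegative/$L^1$ bookkeeping, so there is nothing to correct.
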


We conclude this general part with an invariance principle:
\begin{Proposition}[Invariance principle for additive functionals]\label{gelem_gelem}
Suppose that  $\O$ is a Polish space and that the
 perturbed  process on $\O$ is Feller. Let   Assumption \ref{ferragosto} be satisfied, let $\e < \g$ and let $f: \O \to \bbR$ be a function in $C_b(\O)$.
  Given $n \in \bbN$, define the process 
$$
 B^{(n)}_t(f):= \int_0^{nt}  \frac{ f(\eta_s) - \mu_\e(f) }{\sqrt{n} }ds \,, \qquad t \in \bbR_+\,.
$$ Then 
there exists a constant $\s^2 \geq 0$ such that 
under $\bbP^{(\e)}_{\mu _\e}$  the process
 $ \bigl( B^{(n)}_t \bigr)_{t \in \bbR_+} $ weakly converges to a Brownian motion with diffusion coefficient $\s^2$. 
 \end{Proposition}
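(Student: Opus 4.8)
The plan is to deduce the invariance principle from the standard martingale-approximation (Kipnis--Varadhan) machinery applied to the perturbed stationary process $(\eta_t)_{t\ge0}$ under $\bbP^{(\e)}_{\mu_\e}$. By Theorem~\ref{teo_invariante}, $\mu_\e$ is an invariant and ergodic measure for the perturbed Feller process, so the ingredients needed to run the general CLT for additive functionals of stationary ergodic Markov processes are in place, except for one nonstandard point: $(S_\e(t))_{t\ge0}$ need not be a contraction or self-adjoint on $L^2(\mu_\e)$, and $\mu_\e$ is not the reference measure in which we control the semigroup. The key observation that makes everything work is the exponential decay estimate \eqref{arachidi_bis}: for $f\in L^\infty(\mu)$ (in particular $f\in C_b(\O)$), $\|S_\e(t)f-\mu_\e(f)\|_\e\le (\g/(\g-\e))^{3/2}e^{-(\g-\e)t/2}\|f-\mu(f)\|_\infty$. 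This says the centered function $g:=f-\mu_\e(f)$ is such that $S_\e(t)g$ decays exponentially in $L^2(\mu_\e)$.

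First I would reduce to the centered function $g=f-\mu_\e(f)\in C_b(\O)\subset L^2(\mu_\e)$, so $\mu_\e(g)=0$. Next I would show that the resolvent/Green function
\[
u := \int_0^\infty S_\e(t)g\,dt
\]
is a well-defined element of $L^2(\mu_\e)$: the integrand is Bochner-integrable in $L^2(\mu_\e)$ by \eqref{arachidi_bis}, with $\|u\|_\e\le C\|f-\mu(f)\|_\infty/(\g-\e)$. Since $L_\e^{(\e)}$ (the generator of $(S_\e(t))$ acting on $L^2(\mu_\e)$, equivalently the restriction of the Feller generator) applied to $u$ gives $L_\e^{(\e)}u=-g$ — this is the standard computation $L_\e^{(\e)}\int_0^\infty S_\e(t)g\,dt=\lim_{t\to\infty}S_\e(t)g-g=-g$, valid because $S_\e(t)g\to0$ in $L^2(\mu_\e)$ — the function $g$ lies in the range of the generator. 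Then I would write the Dynkin decomposition: the process
\[
M_t := u(\eta_t)-u(\eta_0)-\int_0^t (L_\e^{(\e)}u)(\eta_s)\,ds = u(\eta_t)-u(\eta_0)+\int_0^t g(\eta_s)\,ds
\]
is an $L^2$ martingale under $\bbP^{(\e)}_{\mu_\e}$ with stationary increments (here one needs $u\in\cD(L_\e^{(\e)})$ and the Dynkin formula, which holds for functions in the domain of a Feller generator against the stationary measure — a routine approximation argument, or one may work with $u_\lambda=\int_0^\infty e^{-\lambda t}S_\e(t)g\,dt$ and let $\lambda\downarrow0$ as in Kipnis--Varadhan to avoid domain issues). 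Consequently
\[
\int_0^{nt}g(\eta_s)\,ds = -M_{nt} + u(\eta_0)-u(\eta_{nt}),
\]
so $B^{(n)}_t(f)=-M_{nt}/\sqrt n + (u(\eta_0)-u(\eta_{nt}))/\sqrt n$.

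For the limit, the boundary term $(u(\eta_0)-u(\eta_{nt}))/\sqrt n$ converges to $0$ in probability (uniformly on compact $t$-intervals) because $\mathbb{E}^{(\e)}_{\mu_\e}[u(\eta_{nt})^2]=\|u\|_\e^2$ is bounded, so it is $O_{\bbP}(n^{-1/2})$. The rescaled martingale $M_{nt}/\sqrt n$ converges to a Brownian motion with variance $\s^2 t$ by the functional CLT for martingales with stationary ergodic increments (Billingsley, or the continuous-time version): the quadratic variation $\langle M\rangle_{nt}/n\to \s^2 t$ by Birkhoff's ergodic theorem applied to the stationary process, since $\langle M\rangle_t=\int_0^t \Gamma(\eta_s)\,ds$ for the carré-du-champ $\Gamma=L_\e^{(\e)}(u^2)-2uL_\e^{(\e)}u$, which is in $L^1(\mu_\e)$, and $\s^2=\mu_\e(\Gamma)\ge0$; the Lindeberg condition follows from stationarity and $L^2$-integrability of increments together with continuity of paths (jumps of $u(\eta_t)$ are controlled). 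Combining, $B^{(n)}(f)\Rightarrow$ Brownian motion with coefficient $\s^2$.

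The main obstacle I anticipate is the justification of the martingale decomposition in the non-self-adjoint, non-reversible setting where $S_\e(\cdot)$ is not a contraction semigroup on $L^2(\mu_\e)$: one must verify that $u$ (or its resolvent approximants $u_\lambda$) genuinely lies in the domain of the $L^2(\mu_\e)$-generator of the Feller semigroup and that Dynkin's formula applies, and one must identify $\lim_\lambda \lambda\mu_\e(u_\lambda^2)=0$ so that the Kipnis--Varadhan criterion is met — this is exactly where \eqref{arachidi_bis} does the heavy lifting, replacing the usual spectral-gap/contraction estimate. A secondary technical point is upgrading from convergence of finite-dimensional distributions to the functional (Skorokhod $D(\bbR_+,\bbR)$) statement, which requires tightness; this follows from the martingale maximal inequality applied to $M_{n\cdot}/\sqrt n$ plus the negligibility of the boundary term, so it is routine once the martingale CLT is in hand.
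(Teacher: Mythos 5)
Your strategy is the same as the paper's: reduce to $g=f-\mu_\e(f)$, build the potential $u=\int_0^\infty S_\e(t)g\,dt$ in $L^2(\mu_\e)$ using \eqref{arachidi_bis}, verify $\mathcal L_\e u=-g$, write the Dynkin decomposition, and conclude via the functional CLT for stationary ergodic martingales plus negligibility of the boundary term. The first half is sound and matches Section \ref{limiti_perturbato} almost verbatim (the paper likewise avoids the carr\'e-du-champ formula $\Gamma=L(u^2)-2uLu$, which you would need to justify since $u^2\in\cD(\mathcal L_\e)$ is not known; it instead uses $\langle M\rangle_{j+1}-\langle M\rangle_j=\langle M\rangle_1\circ\theta_j$ and Birkhoff, which is the safer route).

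The genuine gap is your treatment of the remainder $R_t=u(\eta_0)-u(\eta_t)$. You assert that $(u(\eta_0)-u(\eta_{nt}))/\sqrt n\to 0$ in probability \emph{uniformly on compact $t$-intervals} ``because $\bbE^{(\e)}_{\mu_\e}[u(\eta_{nt})^2]=\|u\|_\e^2$ is bounded.'' A pointwise second-moment bound gives negligibility only for each fixed $t$; it does not control $\sup_{t\le T}|u(\eta_{nt})|/\sqrt n$, which is a supremum over a continuum of times of a process that is not a martingale, so no maximal inequality applies to it directly. Your later remark that tightness is ``routine'' via ``the martingale maximal inequality plus the negligibility of the boundary term'' is circular: the uniform negligibility of the boundary term is precisely the missing ingredient. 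This is the main technical content of the paper's Lemma \ref{monnezza}, which requires a block decomposition of $[0,j]$ into $\lceil j^{4/5}\rceil$ blocks of length $\lceil j^{1/5}\rceil$: the maximum of $|R|$ over the grid points is controlled by a union bound and Chebyshev (this is where $m_j=o(j)$ is needed), while the oscillation of $R$ inside each block is controlled indirectly by bounding the oscillations of $A_\cdot(f)$ (via a third-moment estimate using boundedness of $f$) and of $M_\cdot$ (via the martingale maximal inequality), since $R=A(f)-M$. The paper explicitly notes that this step does not follow from the Kipnis--Varadhan estimates (whose Lemma 1.4 requires reversibility), so in the present non-reversible setting it cannot be waved away. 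Supplying this block argument (or an equivalent one) is necessary to complete your proof.
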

  Proposition  \ref{gelem_gelem} is proved in Section \ref{limiti_perturbato}, where a characterization of $\s^2$ is given.


 
\section{Proof of Lemma \ref{ponte}}\label{golden_gate}
 We first note that the semigroup $\tilde{S}_\e (t)$ is well defined  since $C(\O)= C_b(\O)$ due to compactness. Let us  prove the lemma. 
We claim  that  $ \cD(\tilde L_\e) \subset \cD(L_\e)$ and that  $\tilde L_\e f = L_\e f$ for all  $f \in \cD(\tilde L_\e) $. To prove our claim  fix $f \in     \cD(\tilde L_\e)$. By definition of core, there exists $f_n \in \cC_\e$ with $f_n \stackrel{\|\cdot \|_\infty}{\to } f$  and $\tilde L_\e f_n \stackrel{\|\cdot \|_\infty}{\to }\tilde L_\e f$. The convergence holds also in $L^2(\mu)$, while by assumption $f_n \in \cC_\e \subset \cD(L_\e)$ and $\tilde L_\e f_n = L_\e f_n$. Using that the operator $L_\e$ is closed in $L^2(\mu)$ (being an infinitesimal generator), we get that necessarily $f  \in \cD(L_\e)$ and 
$\tilde L_\e f=  L_\e f$, thus proving our  claim.
Let again $f \in \cD( \tilde L_\e)$. Then (cf.\@ \cite[Lemma 1.3, Chapter 2]{EN}) $ \tilde S _\e(t) f \in  \cD( \tilde L_\e)$. By the above claim we get that $ \tilde S _\e(t) f \in  \cD(  L_\e)$ and $L_\e \tilde S _\e(t) f = \tilde L_\e \tilde S _\e(t) f $. Since (cf.\@ \cite[Lemma 1.3, Chapter 2]{EN})  ${ \lim _{\d \to  0} \frac{ \tilde S_\e(t+\d) 
- \tilde S_\e(t) }{\d} f  }=   \tilde L_\e \tilde S _\e(t) f=   L_\e \tilde S _\e(t) f $ in uniform norm, the same must hold in $L^2(\mu)$ (if $t=0$, the above limit has to be taken with $\d \downarrow 0$). Collecting the above observations we get that the function 
$\varphi(t) : [0,+\infty) \ni t \mapsto \tilde S_\e (t) f\in L^2(\mu)$ has values in $\cD(L_\e)$ and satisfies the Cauchy problem  $ \varphi'(t) =L_\e \varphi (t) $, $\varphi(0)= f$, where $\varphi'(0)$ has to be thought as right derivative.  Since also the function $\bar \varphi(t) : [0,+\infty) \ni t \mapsto  S_\e (t) f\in L^2(\mu)$ satisfies the same properties, by the uniqueness  of the solution of the Cauchy problem (cf.\@ \cite[end of page 483]{K}) we conclude that   
$ \tilde S_\e (t) f  =   S_\e (t) f $, i.e.\@ we get  \eqref{ghiacciolo} for $f \in \cD( \tilde L_\e)$. To extend \eqref{ghiacciolo} to any $f \in C(\O)$ its enough to take $f_n \in  \cD( \tilde L_\e) $ with $\| f-f_n\|_\infty \to 0$. Then also $\| f-f_n \| \to 0$. At this point it is enough to take the limit $n \to \infty$ in the identity $ \tilde S_\e (t) f _n =   S_\e (t) f _n$ and use that  $\tilde S_\e (t) $ is a bounded operator in $C(\O)$, while $   S_\e (t)$ is a bounded operator in $L^2(\mu)$.

\section{Preliminary estimates on Dyson--Philipps expansion}\label{primino}

In this section we prove Proposition \ref{spezzatino} and the bound in \eqref{convergencetomean}. 
Let us first state a simple remark (whose proof is omitted since standard) that will be frequently used: 
\begin{remark}\label{luna_piena}
Since $\mu$ is a stationary distribution for the unperturbed process and the Poincar\'e  inequality \eqref{banana} is satisfied,  we have that
(i) $S(t)f=f$ for all $t \geq 0$ iff $f$ is a constant function, 
(ii)  $0$ is a simple eigenvalue of $L$,
(iii) $\mu(S(t) f)= \mu(f)$ for any $f \in L^2(\mu) $. Moreover, since $L_\e$ is a Markov generator, it must be $\hat L_\e f=0  $ for $f$ constant. 
\end{remark} 
In the next proposition, by means of the Poincar\'e inequality, we improve known general bounds concerning the Dyson--Phillips expansion.
In what follows, given $f \in L^2(\mu)$, we abbreviate (recall \eqref{Sn}) :
\begin{equation}\label{gigino}
g_{n}(t) :=S_\e^{(n-1)}(t) f, \, \text{ for any }n \geq 1,
\end{equation}
so that the Dyson--Phillips expansion in equation \ref{DP} reads as 
\begin{equation}
S_\e (t)f= \sum _{n =1} ^\infty g_n (t)  \,, \qquad f \in L^2(\mu)\,.
\end{equation}

\begin{Proposition}\label{prop_pert}
For each $f \in L^2(\mu)$ and  $n\geq 1$   it holds
\begin{align}
& \| g_n(t) - \mu (g_n (t)) \| \leq e^{-\gamma t } \frac{( \e   t)^{n-1} }{(n-1)!} \| f- \mu(f) \|\,,\label{variancedecay}\\
& | \mu\bigl( \hat L_\e g_n(t)\bigr) | \leq \e   e^{-\gamma t } \frac{(\e   t)^{n-1} }{(n-1)!} \| f- \mu(f) \| \,,\label{caffettino}\\
& | \mu(g_{n+1}(t)) | \leq (\e/\g)^{n}\|f-\mu(f)\|\,.\label{meandecay}
\end{align}
Moreover, $\mu( g_1 (t) )= \mu(f)$ and,  for each $n\geq 1 $,    
\begin{equation}\label{pioggia}\lim_{t \to \infty} \mu ( g_{n+1}(t))=\int_0^\infty \mu\bigl( \hat L_\e g_{n}(s) \bigr) ds\,,\end{equation}
 the  integral
being  well posed   due to \eqref{caffettino}. More precisely, it holds
\begin{equation}\label{neve}
\bigl| \mu ( g_{n+1}(t)) - \int_0^\infty \mu\bigl( \hat L_\e g_{n}(s) \bigr) ds \bigr|
\leq \| f -\mu(f) \|  \int_t ^\infty \e e^{-\g s} \frac{(\e s)^{n} }{n!} ds \,.
\end{equation}
\end{Proposition}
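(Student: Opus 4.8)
The plan is to prove the five displayed inequalities essentially by induction on $n$, using the Poincaré bound \eqref{banana}, the boundedness $\|\hat L_\e\| = \e$, and the stationarity facts collected in Remark \ref{luna_piena}. First I would record the base case. Since $g_1(t) = S(t)f$ and $S(\cdot)$ satisfies \eqref{banana}, we get $\|g_1(t) - \mu(f)\| \le e^{-\g t}\|f - \mu(f)\|$, which is \eqref{variancedecay} for $n=1$; moreover $\mu(g_1(t)) = \mu(S(t)f) = \mu(f)$ by Remark \ref{luna_piena}(iii), so $\mu(g_1(t))$ is a constant function and $|\mu(\hat L_\e g_1(t))| = |\mu(\hat L_\e S(t)f)|$; writing $S(t)f = \mu(f) + (S(t)f - \mu(f))$ and using $\hat L_\e(\text{const}) = 0$ together with $|\mu(\hat L_\e u)| \le \|\hat L_\e u\| \le \e\|u\|$ (here I would also use $\mu(\hat L_\e u) = \mu((\hat L_\e)^* \mathbf 1 \cdot u)$ and Cauchy--Schwarz, or just $|\mu(v)| \le \|v\|$) gives \eqref{caffettino} for $n=1$.

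For the inductive step, recall from \eqref{Sn} that $g_{n+1}(t) = S_\e^{(n)}(t)f = \int_0^t S(t-s)\hat L_\e g_n(s)\,ds$. To handle \eqref{variancedecay} I would decompose $g_{n+1}(t) - \mu(g_{n+1}(t))$ into $\int_0^t \big( S(t-s) - \mu(\cdot)\big)\hat L_\e g_n(s)\,ds$ plus a scalar correction term, then apply \eqref{banana} to the integrand. The key point is that $\hat L_\e g_n(s)$ has the same $\mu$-mean regardless, but what I actually need is $\|\big(S(t-s)-\mu(\cdot)\big)\hat L_\e g_n(s)\| \le e^{-\g(t-s)}\|\hat L_\e g_n(s) - \mu(\hat L_\e g_n(s))\|$; since $\hat L_\e$ annihilates constants, $\hat L_\e g_n(s) = \hat L_\e(g_n(s) - \mu(g_n(s)))$, so $\|\hat L_\e g_n(s)\| \le \e\|g_n(s) - \mu(g_n(s))\| \le \e e^{-\g s}\frac{(\e s)^{n-1}}{(n-1)!}\|f-\mu(f)\|$ by the induction hypothesis \eqref{variancedecay}. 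Integrating $e^{-\g(t-s)}e^{-\g s} = e^{-\g t}$ against $\frac{(\e s)^{n-1}}{(n-1)!}$ over $[0,t]$ produces exactly $e^{-\g t}\frac{(\e t)^n}{n!}\|f-\mu(f)\|$, giving \eqref{variancedecay} at level $n+1$. Then \eqref{caffettino} at level $n+1$ follows immediately: $|\mu(\hat L_\e g_{n+1}(t))| \le \|\hat L_\e g_{n+1}(t)\| = \|\hat L_\e(g_{n+1}(t) - \mu(g_{n+1}(t)))\| \le \e\|g_{n+1}(t) - \mu(g_{n+1}(t))\|$, and we plug in the bound just proved.

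Next, \eqref{meandecay}: from $\mu(g_{n+1}(t)) = \int_0^t \mu\big(S(t-s)\hat L_\e g_n(s)\big)\,ds = \int_0^t \mu(\hat L_\e g_n(s))\,ds$ (using Remark \ref{luna_piena}(iii) to drop $S(t-s)$), we bound $|\mu(g_{n+1}(t))| \le \int_0^\infty |\mu(\hat L_\e g_n(s))|\,ds \le \int_0^\infty \e e^{-\g s}\frac{(\e s)^{n-1}}{(n-1)!}\|f-\mu(f)\|\,ds$; the integral $\int_0^\infty \e e^{-\g s}\frac{(\e s)^{n-1}}{(n-1)!}\,ds = (\e/\g)^n$ (a Gamma integral), yielding \eqref{meandecay}. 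The identity $\mu(g_1(t)) = \mu(f)$ was noted above. Finally, for \eqref{pioggia} and \eqref{neve}: the same computation shows $\mu(g_{n+1}(t)) = \int_0^t \mu(\hat L_\e g_n(s))\,ds$, so $\mu(g_{n+1}(t)) - \int_0^\infty \mu(\hat L_\e g_n(s))\,ds = -\int_t^\infty \mu(\hat L_\e g_n(s))\,ds$, and taking absolute values and inserting \eqref{caffettino} gives \eqref{neve} directly; \eqref{pioggia} is then the $t\to\infty$ limit, the tail integral vanishing because $\int_0^\infty \e e^{-\g s}\frac{(\e s)^n}{n!}\,ds < \infty$. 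I do not expect a serious obstacle here; the only point requiring a little care is the interchange of $\mu$ with the $L^2(\mu)$-valued Bochner integral in \eqref{Sn} (legitimate since $\mu(\cdot)$ is a bounded linear functional on $L^2(\mu)$) and the commuting of $\mu$ past $S(t-s)$, both of which are routine given the framework already set up.
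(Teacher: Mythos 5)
Your proposal is correct and follows essentially the same route as the paper: induction on $n$ via the chain $\|g_{n+1}(t)-\mu(g_{n+1}(t))\|\leq\int_0^t e^{-\g(t-s)}\|\hat L_\e(g_n(s)-\mu(g_n(s)))\|ds$, then deducing \eqref{caffettino} from \eqref{variancedecay} using that $\hat L_\e$ annihilates constants, and obtaining \eqref{meandecay}, \eqref{pioggia}, \eqref{neve} from the identity $\mu(g_{n+1}(t))=\int_0^t\mu(\hat L_\e g_n(s))ds$. One cosmetic remark: inserting \eqref{caffettino} into the tail integral yields the exponent $n-1$ rather than the $n$ displayed in \eqref{neve}, but the paper's own proof produces exactly the same bound, so this is a harmless discrepancy of the printed statement and not a gap in your argument.
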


\begin{proof} 
To prove  \eqref{variancedecay} we bound
\begin{align*}
& \|g_{n+1}(t)-\mu(g_{n+1}(t))\|=\bigl\|\int_0^t S(t-s)\hat L_\e g_n(s)ds-\mu\bigl(\int_0^tS(t-s)\hat L_\e g_n(s)ds\bigr)\bigr\|\\
&\leq  \int_0^t\bigl\|S(t-s)\hat L_\e g_n(s)-\mu\bigl(S(t-s)\hat L_\e g_n(s)\bigr)\bigr\|ds\\
&
\leq  \int_0^t e^{-\g (t-s)}\bigl\|\hat L_\e g_n(s)-\mu\bigl(\hat L_\e g_n(s)\bigr)\bigr\|ds \leq   \int_0^t e^{-\g (t-s)}\bigl\|\hat L_\e g_n(s)      \bigr\|ds\\
&= \int_0^t e^{-\g (t-s)}\bigl\|\hat L_\e\bigl(g_n(s)-\mu(g_n(s))\bigr)\bigr\|ds \leq  \int_0^t e^{-\g (t-s)}\bigl\|\hat L_\e\bigr\|\bigl\|g_n(s)-\mu(g_n(s))\bigr\|ds,
\end{align*}
where  
the second inequality follows from  Item (iii) in Remark \ref{luna_piena} and from the $L^2$--exponential decay \eqref{banana}, the third one  uses that $\|f-\mu(f)\|\leq \|f\|$
  for any $f \in L^2(\mu)$.  
 With this established, we can check \eqref{variancedecay} inductively, noticing that for $n=1$, the inequality is just a consequence of  
 the $L^2$--exponential decay \eqref{banana} and   Item (iii) in Remark \ref{luna_piena}.

To prove \eqref{caffettino}, by Remark \ref{luna_piena} we can bound
$ |\mu( \hat L_\e g_n(s) ) |$ by  $| \mu ( \hat L_\e (g_n(s)  -\mu(g_n(s) ) )| \leq \| \hat L_\e\| \|g_n(s)  -\mu(g_n(s) )\|$.
At this point the thesis follows from  \eqref{variancedecay}.

To prove  \eqref{meandecay} we write $\mu(g_{n+1}(t))$ as $\int_0^t\mu (\hat L_\e g_n(s)  )ds$.
 By \eqref{caffettino} the last integral can be bounded by $ \frac{\e ^n}{(n-1)!}\|f-\mu(f)\|\int_0^\infty e^{-\gamma s}s^{n-1}ds$, thus leading to \eqref{meandecay}.

The identity $\mu(g_1(t))=\mu(f)$ follows from  Remark  \ref{luna_piena}. As in the proof of \eqref{caffettino},
 $\int_t^\infty \bigl|\mu\bigl(\hat L_\e g_n(s)  \bigr)\bigr|ds\leq  \int_t^\infty ds \e \,e^{-\gamma s}\frac{(\e  s)^{n-1}}{(n-1)!}\|f-\mu(f)\| $, which goes to zero as $t \to \infty$. Hence, $\mu(g_{n+1}(t))$ has limit \eqref{pioggia}, which is finite, and also \eqref{neve} holds.
\end{proof}

We have now the tools to prove  some assertions of Section \ref{viva_segovia}:

\begin{proof}[Proof of Prop. \ref{spezzatino} and  \eqref{convergencetomean}] Due to \eqref{variancedecay} and \eqref{meandecay} we can bound the l.h.s. of \eqref{volare} by 
\[
 \|f-\mu(f)\|\bigl\{\sum_{n=k}^\infty e^{-\gamma t}\frac{(\g  t)^n}{n!} (\e/\g)^{n}  +\sum_{n=k}^\infty(\e/\g) ^{n}\bigr\} \leq  \|f-\mu(f)\|2\sum_{n=k}^\infty (\e/\g)^n\,,
\]
thus leading to \eqref{volare}.
 
Due to the Dyson--Phillips expansion,  we can bound $ \| S_\e(t) f - \mu( S_\e(t)f ) \|$ by $\sum_{n\geq 1}\|g_n(t)-\mu(g_n(t))\|$, and  \eqref{convergencetomean}  follows  immediately from    \eqref{variancedecay}. 
\end{proof}



\section{Proof of Theorem  \ref{teo_invariante}}\label{assoluto}

Let us denote by  $\G(f)$ the r.h.s. of \eqref{muinfinity_bis}. We first observe that by \eqref{caffettino} the integral and series in the r.h.s. of \eqref{muinfinity_bis} are absolutely convergent, hence $\G(f)$ is well defined.
Moreover,  always by \eqref{caffettino}, we get
$| \G(f) |\leq \bigl(\g/(\g-\e) \bigr) \|f\|$.

Due to the Dyson--Phillips expansion, it holds   $ \mu( S_\e (t) f) = \sum _{n \geq 1} \mu( g_n(t) ) $. Hence,  one easily derives \eqref{barbiere}  with  $\mu_\e(f)$ replaced by $\G(f)$  from \eqref{neve}.
As a byproduct with  \eqref{convergencetomean} proved at the end of Section \ref{primino}, we conclude that
\begin{equation}\label{stanca}
 \lim _{t \to \infty} \| S_\e(t) f - \G(f) \| = 0 \,, \qquad f \in L^2(\mu) \,.
 \end{equation}

\subsection{Proof of Item \eqref{a}}
Consider now the perturbed Markov process  with initial distribution $\nu$ as in Item  \eqref{a}  and call $\nu^{(t)}_\e $  its distribution at time $t$. Take $f \in C_b (\O)$. We claim that \begin{equation}\label{canzoncina}
\nu ^{(t)}_\e (f) = \mu \Big( \frac{d \nu}{d\mu}\bbE^{(\e)}_\cdot ( f(\eta_t) ) \Big)=\mu  \Big( \frac{d \nu}{d\mu}S_\e (t) f \Big)\underset{t\to \infty}{\longrightarrow} \G(f) \,, \qquad f \in C_b (\O)\,.
\end{equation}
(note that the first identity is trivial, while the second follows from \eqref{ghiacciolo}).
To this aim it is enough to prove this equivalent claim: for any diverging sequence $t_n \nearrow \infty$  there exists a subsequence $t_{n_k}$ such that $\mu  \Big( \frac{d \nu}{d\mu}[S_\e (t_{n_k}) f-\G(f)] \Big) \to 0$ as $k \to \infty$. Since $S_\e (t_n) f - \G(f)\to 0 $ in $L^2(\mu)$, there exists a subsequence $t_k$ such that  
$S_\e (t_{n_k}) f - \G(f)\to 0$ $\mu$--a.s.. Hence $|S_\e (t_{n_k}) f - \G(f)|$ is a function bounded by $(1+\g/(\g-\e))\|f\|_\infty$ (recall \eqref{ghiacciolo}) and converging to zero $\mu$--a.s.\@. The equivalent claim  then follows by the dominated convergence theorem.

We know that $\G:L^2(\mu) \to L^2(\mu)$ is a bounded linear operator. By Riesz representation theorem, there exists $h_\e \in L^2(\mu)$ such that $ \G(f) = \mu( h_\e f) $ for each $f \in L^2(\mu)$. We observe  that $h_\e \geq 0$ $\mu$--a.s.\@ since  $\G(f) \geq 0$ for any $f \in C_{b,+} (\O)$ (cf.\@ Lemma \ref{miele}-(ii)).
Let us define  the nonnegative measure $\mu_\e$ as $d \mu_\e = h_\e d \mu$. By \eqref{canzoncina} we conclude that  $\mu_\e ( \mathds{1})=1$, hence $\mu_\e$ is a probability measure. Using that 
 $\G(f)= \mu_\e(f)$,  by \eqref{canzoncina} we get  Item \eqref{a}.


 \subsection{Proof of Item \eqref{iii}}
 Since $\mu_\e(f)= \G(f)$,  by the definition of $\G(f)$ we get \eqref{muinfinity_bis}.  We have already proved  \eqref{convergencetomean}  at the end of Section \ref{primino}, while at the beginning of this section we have shown that \eqref{barbiere} holds with $\G(f)$ instead of $\mu_\e(f)$. Since {these} two values are indeed equal, we get \eqref{barbiere} and therefore Item \eqref{iii}.
{\eqref{ikea} and  \eqref{benedicte} are  a simple consequence of  \eqref{muinfinity_bis} and  \eqref{caffettino}}.

\subsection{Proof of Item \eqref{ii}}  By construction, $\mu_\e \ll \mu$ with Radon--Nikodym derivative $h_\e$.
By \eqref{stanca} and since $\G(f)=\mu( h_\e f)= \mu_\e(f)$ for any $f \in L^2(\mu)$, we have that $ \mu_\e (f)=\lim_{t \to \infty} \mu_\e( S_\e(t) f) $ for any
 $f \in C_b(\O)$. Taking $f:= S_\e(s)g$ and afterwards $f:= g$ and using the semigroup property $S_\e(t+s) g = S_\e (t) S_\e(s) g$, we conclude that $\mu_\e( S_\e (s) g) = \mu_\e(g)$ for any $g \in C_b (\O)$. By Assumption \ref{ferragosto} this implies that $\mu_\e \bigl( \bbE^{(\e)} _\cdot [g(\eta_s) ]\bigr )= \mu_\e (g)$ for any $g \in C_b (\O)$, hence the invariance of $\mu_\e$ for the perturbed Markov process.  The uniqueness assertion  follows from {Item \eqref{a}}.

To derive the expansion \eqref{expansiong}, note first that for $n \geq 0$, and any $f \in L^2(\mu)$, we have
\begin{equation}\label{pasqua}  \mu\bigl( \hat L_\e S_\e^{(n)}(s) f \bigr)= \mu
\bigl( \bigl( [S_\e^{(n)}(s)]^*  \hat L_\e^* \mathds{1} \bigr) f)=: \mu
\bigl( \bigl( H_\e^{(n+1)}(s)  \mathds{1} \bigr) f), \end{equation}
and the recursions in \eqref{defH} easily follow.
By \eqref{caffettino}  and \eqref{pasqua}, we then get $\| H_\e^{(n)}(s)  \mathds{1}   \| 
  \leq \e   e^{-\gamma s } \frac{(\e   s)^{n} }{n!} $. It then follows that the integrals and the series  in the r.h.s. of  \eqref{expansiong}  are absolutely convergent in $L^2(\mu)$, and therefore, by \eqref{muinfinity_bis} and  \eqref{pasqua}, the expansion \eqref{expansiong} holds.

From \eqref{expansiong} and the bound $\| H_\e^{(n)}(s)  \mathds{1}  \| 
  \leq 
\e   e^{-\gamma s } \frac{(\e   s)^{n} }{n!} $ we get $\|h_\e-\mathds{1}\|\leq\frac{ \e}{\g-\e}$.
 \subsection{Proof of Item \eqref{v}}
Some of the ideas are taken from \cite{KO}[Sec. 3.1.2] although we show that   some assumptions there can indeed be avoided.

We know that $\mu_\e \ll \mu$ (see Item \eqref{i}).  We call  $A_\e$ the $\mu$--support of  ${ h_\e=}d \mu_\e / d\mu$. We only need to prove that  $\mu( A_\e^c)=0$. By stationarity of $\mu_\e$ w.r.t.\@ the perturbed dynamics, we have
\begin{equation}
0 = \mu_\e (A_\e ^c) = \mu_\e \left( S_\e (t) \mathds{1} _{ A_\e^c}
\right)=  \int \mu(d \eta){h_\e} (\eta) \bbP_\eta ^{(\e)} \left[ \eta_t \in A_\e^c \right] \,.
\end{equation}
Hence, $\mu \bigl( \{ \eta \in A_\e \,:\, \bbP_\eta ^{(\e)} \bigl[ \eta_t \in A_\e^c \bigr]>0 \} \bigr) =0$. 
By condition \eqref{ho_fame} we conclude that  $\mu \bigl( \{ \eta \in A_\e  \,:\, \bbP_\eta  \bigl[ \eta_t \in A_\e^c \bigr]>0 \} \bigr) =0$. This implies that  the function $\eta \mapsto 
 \bbP_\eta  \bigl[ \eta_t \in A_\e^c \bigr] = S(t) \mathds{1} _{A^c_\e}(\eta) \in [0,1]$ is zero on $A_\e$ $\mu$--a.s., hence $ 0 \leq  S(t) \mathds{1} _{A^c_\e} \leq  \mathds{1} _{A^c_\e}$ $\mu$--a.s..
 Suppose by absurd that $\mu ( \{ \eta \,:\, S(t) \mathds{1} _{A^c_\e} ( \eta) < 
\mathds{1} _{A^c_\e} ( \eta) \} )>0$. Then we would conclude that $\mu (S(t) \mathds{1} _{A^c_\e} ) < \mu ( \mathds{1} _{A^c_\e})$, in contradiction with the stationarity of $\mu$ w.r.t.\@ $S(t)$. Hence, it must be $ S(t) \mathds{1} _{A^c_\e} =  \mathds{1} _{A^c_\e}$ $\mu$--a.s.. Since this holds for each $t $, by the ergodicity of $\mu$ we conclude that $\mu ( A^c _\e)\in \{0,1\}$. If $  \mu ( A^c _\e)=1$, then ${h_\e}\equiv 0$ $\mu$--a.s., while $\mu({h_\e})=1$. It remains the case $\mu ( A^c _\e)=0$, which 
implies that $\mu \ll  \mu_\e$.


\subsection{Proof of Item \eqref{iv} } Let $C_{b,+} (\O):= \{ f \in C_b(\O) \,:\, f\geq 0\}$ and $L^2 _+ (\O):= \{f \in L^2 (\mu) \,:\, f \geq 0 \; \mu\text{--a.s.}\}$.  
By Assumption \ref{ferragosto} we have $	{S_\e(t)} f \geq 0$ $\mu$--a.s.\@ for any $f \in C_{b,+} (\O)$.
Since $C_{b,+}(\O)$ is $\|\cdot\|$--dense in $L^2 _+ (\O)$, as immediate consequence of Lemma \ref{miele}-(i), we conclude that   ${S_\e (t) } f \geq 0$ $\mu$--a.s.\@ for any $f \in L^2_+(\mu)$. Since $\|f\|_\infty -f \in L^2_+(\mu)$  and ${S_\e(t)}  \|f\|_\infty = \|f\|_\infty$, we then conclude that ${S_\e(t)} f \leq \|f\|_\infty$ $\mu$--a.s.\@ for any $f \in  L^\infty (\mu) $.  By applying the last bound to  $-f$, we get $|{S_\e(t)} f | \leq \|f\|_\infty$ $\mu$--a.s.\@ for any $f \in L^\infty (\mu) $.  

The above considerations and Schwarz inequality imply for   any  $f \in L^\infty (\mu) $ that 
\begin{equation}
\begin{split}
\| S_\e (t) f -\mu_\e(f) \|_\e ^2 & = 
\mu_\e ( |S_\e (t) f -\mu_\e(f) |^2)\leq  \|f- \mu_\e(f) \|_\infty  \mu ( h_\e |S_\e (t) f -\mu_\e(f) |)\\
& \leq 
 \|f- \mu_\e(f) \|_\infty   \| h_\e\| \cdot  \|S_\e (t) f -\mu_\e(f) \|\,.
\end{split}
\end{equation}
By \eqref{benedicte} in Item \eqref{iii} we have $ \|f- \mu_\e(f) \|_\infty \leq  \|f- \mu (f) \|_\infty  +|\mu(f)- \mu_\e(f) | \leq \frac{\g}{\g-\e}  \|f- \mu (f) \|_\infty$.
By Item \eqref{ii} we have  $\|h_\e \| \leq \g /(\g-\e)$ and by Item \eqref{iii} we have 
$ \|S_\e (t) f -\mu_\e(f) \|\leq [ \g /(\g-\e)] e^{- (\g-\e)t} \|f-\mu(f)\|$. Hence the conclusion.


\subsection{Proof of Item \eqref{i} } Recall Remark \ref{compleanno}.
  Let $B \subset \O$ be a Borel set satisfying $\mathds{1}_B (\eta_t) =\mathds{1}_B (\eta_0)$ $\bbP^{(\e)}_{\mu_\e}$--a.s.\@ {for all $t\geq 0$.} By Lemma \ref{candelina}\footnote{In the proof of Lemma \ref{candelina} we use Theorem \ref{teo_invariante} but not Item (vi).} we then have $S_\e (t) \mathds{1}_B = \mathds{1}_B$ $\mu_\e$--a.s.. {Then, by \eqref{arachidi_bis}, we conclude that $\mathds{1}_B =\mu_\e(B)$  $\mu_\e$--a.s., thus implying that $\mu_\e(B) \in \{0,1\}$.}




\section{Proof of the invariance principle in  Proposition \ref{gelem_gelem}}\label{limiti_perturbato}

Given $ h \in L^2(\mu_\e)$, we introduce the functional  $ A_t( h)= \int_0^t h(\eta_s) ds $   defined on the path space $D(\bbR_+, \O)$.
By Schwarz inequality and stationarity of $\mu_\e$ for the perturbed process,  we can bound
\begin{equation}\label{succhino}
\|  A_t(h)  \|_{  L^2 \bigl(\bbP^{(\e)} _{\mu_\e}\bigr)}= \bbE^{(\e)}_{\mu_\e}\left[ A_t(h)^2 \right]  ^{1/2}
\leq   t \| h \|_\e \,.
\end{equation}
  The  family of  operators $  \mathcal{S}_\e(t)h (\eta):= \bbE^{(\e)}_\eta \bigl[ h(\eta_t) \bigr]$, $t \in \bbR_+$, is   a  well defined   strongly continuous contraction semigroup 
in $L^2(\mu_\e)$ for $t \in \bbR_+$ (see Lemma \ref{SC} and its proof). We write $\mathcal{L}_\e: \cD(\mathcal{L}_\e) \subset L^2(\mu_\e) \to L^2(\mu_\e)$ for its infinitesimal generator. Do not confuse the above operators $\mathcal{S}_\e(t), \mathcal{L}_\e$ with the previously defined $S_\e(t), L_\e$ which live in $L^2(\mu)$. On the other hand, by \eqref{ghiacciolo}, given $h \in  C_b(\O) $ it holds $\mathcal{S}_\e(t)h = S_\e(t) h$ $\mu$--a.s.\@  and therefore $\mu_\e$--a.s.\@ (since $\mu_\e \ll \mu$). 

 Let $f\in C_b(\O) $,  as in the theorem. 
Since along the proof $\e$ is fixed,  at  cost of replacing $f$ by $f-\mu_\e(f)$ we assume that  $\mu_\e(f)=0$. 
Due to \eqref{arachidi_bis} and the previous observations, we can bound 
\begin{equation}\label{bisogno}
 \int _0^\infty \| \mathcal{S}_\e(t)  f dt \| _\e 
 =
  \int _0^\infty \| S_\e(t)  f dt \| _\e =: \k < \infty\,.
  \end{equation}

Hence,  $ g:=\int _0^\infty \mathcal{S}_\e(t)  f dt$ is a  well defined element  of  $L^2(\mu_\e)$.  Since $\mathcal{S}_\e(r) g- g=-\int_0 ^r \cS_\e(t)  f dt$ and since $\mathcal{S}_\e(t)  f \to f$ in $L^2(\mu_\e)$ as $t \downarrow 0$, by definition of infinitesimal generator  we get that $g \in \cD( \mathcal{L}_\e)$ and $-\mathcal{L}_\e g =f$. 

As a consequence  we can write 
$A_t(f)= M_{t}  + R_{t}$,
where
\begin{align}
& M_{t}:= g (\eta_t)- g  (\eta_0) - \int_0^t \mathcal{L}_\e g ( \eta_s) ds\,,\label{Din1}\\
& R_{t }:=- g ( \eta_t)+ g( \eta_0) \,.\label{Resto}
\end{align}
By \eqref{bisogno}, we get that 
\begin{equation}\label{sirenetta}
\|R_t \|_{  L^2 (\bbP^{(\e)} _{\mu_\e})}  \leq 2 \k \,.
\end{equation}

In what follows,  we   apply the invariance principle for martingales as stated in \cite[Thm. 2.29, Chp. 2]{KLO}, which holds for c\`adl\`ag martingales w.r.t.\@ filtrations satisfying the usual conditions. To this aim,  
 we  take 
the augmented filtration $( \bar \cF_t)_{t \geq 0}$ w.r.t.\@ $\bbP^{(\e)}_{\mu_\e}$ of the natural filtration $( \cF_t)_{t \geq 0}$, where $\cF_t:= \s ( \eta_s : 0 \leq s \leq t)$  \cite[Chp. 2]{KS}. Since we have assumed that the perturbed process is Feller, then this filtration satisfies the usual condition w.r.t.\@ $\bbP^{(\e)}_{\mu_\e}$ \cite{KS}[Prop. 7.7, Chp. 2]. It is known (cf.\@ \cite[Chp. 2]{KLO} that  $(M_t) _{t \geq 0}$ is  a martingale  w.r.t.\@ the augmented filtration $( \bar \cF_t)_{t \geq 0}$. Below  we work with   the c\`adl\`ag modification of $(M_t)_{t \geq 0}$ (cf.\@ \cite[Thm. 3.13, Chp. 1]{KS}), that we still call $M_t$ with some abuse of notation.

We split the rest of the proof in two parts. First we show an invariance principle for the martingale  $M_t$, afterwards we prove that the rest $R_t$ is negligible (cf.\@ Lemma \ref{lufthansa} and Lemma \ref{monnezza}). 

%

\begin{Lemma}\label{lufthansa} 
For any $t\geq 0$, define $ M_t^{(n)}:= \frac{M_{nt}}{\sqrt{n}} , n\in\bbN$.
Then,  under $\bbP ^{(\e)}_{\mu_\e}$,
the rescaled process $( M^{(n)}_t )_{t \in \bbR_+}$ weakly converges to a Brownian motion with diffusion constant $\sigma^2:=\bbE^{(\e)}_{\mu_\e}(M_1^2)\geq 0$.
\end{Lemma}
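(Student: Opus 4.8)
The plan is to derive Lemma~\ref{lufthansa} from the functional central limit theorem for ergodic stationary martingales, in the form cited in the excerpt as \cite[Thm.~2.29, Chp.~2]{KLO}. The object $M_t$ defined in \eqref{Din1} is, under $\bbP^{(\e)}_{\mu_\e}$, a square--integrable c\`adl\`ag martingale with respect to the augmented natural filtration, and its increments are stationary: indeed, writing $\phi:=\cL_\e g = -f$ (so $\phi\in C_b(\O)$ and $\mu_\e(\phi)=0$), the increment $M_{t+h}-M_t = g(\eta_{t+h})-g(\eta_t) - \int_t^{t+h}\phi(\eta_s)\,ds$ is a fixed measurable functional of the path $(\eta_{t+s})_{0\le s\le h}$ applied to the time--shifted trajectory, and $\bbP^{(\e)}_{\mu_\e}$ is time--shift invariant by the stationarity of $\mu_\e$ (Theorem~\ref{teo_invariante}~\eqref{ii}). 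Moreover $\mu_\e$ is ergodic for time translations by Theorem~\ref{teo_invariante}~\eqref{i}, so the stationary increment process inherits ergodicity. Hence the hypotheses of the martingale invariance principle are met.

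The steps I would carry out, in order: (1) record that $M_t$ has stationary, ergodic increments under $\bbP^{(\e)}_{\mu_\e}$ and that $M_t\in L^2(\bbP^{(\e)}_{\mu_\e})$ — the latter because $g\in L^2(\mu_\e)$, $\phi\in L^\infty(\mu_\e)$ and $A_t(f)\in L^2$ by \eqref{succhino}; (2) define $\sigma^2 := \bbE^{(\e)}_{\mu_\e}(M_1^2)$, which is finite and nonnegative, and note that by the martingale property and stationarity of increments $\bbE^{(\e)}_{\mu_\e}(M_t^2) = \sigma^2 t$ for all $t\ge 0$; (3) invoke \cite[Thm.~2.29, Chp.~2]{KLO}: for a stationary ergodic c\`adl\`ag martingale with $L^2$ increments, the rescaled process $M^{(n)}_t = M_{nt}/\sqrt n$ converges weakly in the Skorokhod topology to a Brownian motion with diffusion constant $\sigma^2$; (4) conclude. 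I would also verify explicitly that the filtration requirement (usual conditions) is satisfied, which the excerpt already arranges via the Feller property and \cite[Prop.~7.7, Chp.~2]{KS}.

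The main technical point — though not really an obstacle given the setup already in place — is checking that the increments are genuinely stationary under $\bbP^{(\e)}_{\mu_\e}$, i.e. that $M_t$ is an additive functional of the stationary process up to the boundary terms $g(\eta_t)-g(\eta_0)$; this follows because $g$ is a fixed element of $L^2(\mu_\e)$ and $\cL_\e g$ a fixed element of $L^2(\mu_\e)$, so $t\mapsto M_t$ has the cocycle structure $M_{s+t} = M_s + M_t\circ\theta_s$ and $\bbP^{(\e)}_{\mu_\e}\circ\theta_s^{-1} = \bbP^{(\e)}_{\mu_\e}$. A secondary point is that $\cL_\e g$, a priori only in $L^2(\mu_\e)$, need not be bounded; but this does not matter for the invariance principle, which only needs $M_1\in L^2$, and that is guaranteed by \eqref{succhino} together with $g\in L^2(\mu_\e)$. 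One should of course also note $\sigma^2$ could vanish, in which case the limit is the degenerate (zero) Brownian motion, consistent with the statement $\sigma^2\ge 0$.
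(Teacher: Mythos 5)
Your approach is the same as the paper's: both reduce the lemma to the martingale functional CLT of \cite[Thm.~2.29, Chp.~2]{KLO} after checking square integrability, the usual conditions for the filtration, and stationarity/ergodicity under $\bbP^{(\e)}_{\mu_\e}$. There is, however, one step you skip which is the actual content of the paper's proof. The cited theorem is not stated for ``stationary ergodic $L^2$ martingales'' as such; its hypothesis is that the predictable quadratic variation satisfies $\langle M\rangle_k/k\to\sigma^2$ both a.s.\ and in $L^1(\bbP^{(\e)}_{\mu_\e})$, and your steps (1)--(4) never verify this. The paper does so by writing $\langle M\rangle_k=\sum_{j=0}^{k-1}\bigl(\langle M\rangle_{j+1}-\langle M\rangle_j\bigr)$, observing that $\langle M\rangle_{j+1}-\langle M\rangle_j=\langle M\rangle_1\circ\theta_j$ a.s.\ (the bracket is itself an additive functional of the stationary process), noting $\bbE^{(\e)}_{\mu_\e}\bigl(\langle M\rangle_1\bigr)=\bbE^{(\e)}_{\mu_\e}\bigl(M_1^2\bigr)<\infty$, and then invoking the a.s.\ and $L^1$ Birkhoff ergodic theorems together with the ergodicity of $\bbP^{(\e)}_{\mu_\e}$ under time translations. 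All the ingredients for this verification are already in your write-up (the cocycle identity $M_{s+t}=M_s+M_t\circ\theta_s$, stationarity and ergodicity of $\mu_\e$ from Theorem \ref{teo_invariante}), so the omission is one of execution rather than of a missing idea; but as written the hypothesis of the theorem you invoke is left unchecked. Your remaining points (square integrability of $M_1$, the usual conditions via the Feller property, and the degenerate case $\sigma^2=0$) match the paper.
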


\begin{proof} The martingale $M_t$ is square integrable w.r.t.\@ $\bbP^{(\e)}_{\mu_\e}$. According to \cite[Thm. 2.29, Chp. 2]{KLO} we only need to prove that 
$\langle M \rangle _k /k$ converges to $\bbE^{(\e)}_{\mu_\e}(M_1^2)$ both $\bbP^{(\e)}_{\mu_\e}$--a.s.\@ and in $L^1( \bbP^{(\e)}_{\mu_\e})$. 
To this aim we write $ \langle M \rangle _k = \sum _{j=0}^{k-1} (\langle M\rangle_{j+1} - \langle M\rangle_{j})$ and we point out (cf.\@ \cite[chp.2]{KLO}) that  
$$
 \langle M\rangle_{j+1} - \langle M\rangle_{j}= \langle M \rangle_1 \circ \theta _j \, \quad \bbP^{(\e)}_{\mu_\e}\text{--a.s.}.
$$
Moreover, we have     $\bbE^{(\e)}_{\mu_\e}  \bigl( \langle M\rangle_1 \bigr) = \bbE^{(\e)}_{\mu_\e}  \bigl( M^2_1 \bigr)< \infty$.

At this point the thesis follows from the a.s and the $L^1$--Birkhoff ergodic theorem and the ergodicity of $\bbP^{(\e)}_{\mu _\e}$ w.r.t.\@ time translations.
\end{proof}

In the following lemma we give a bound to control the trajectories of the  error $R_t$ (cf.\@ \eqref{Resto}).  
The proof of the lemma is based on a simple block-decomposition argument in the spirit of \cite{RAS}.

\begin{Lemma}\label{monnezza}We have   
\begin{equation}\label{riciclo}
\lim _{T \to \infty }\frac{\verde{\sup}_{ t \leq T} |R_t| }{\sqrt{T} }=0 \qquad \text{ in $\;\;\bbP^{(\e)}_{\mu_\e}$--probability  }\,.
\end{equation}
\end{Lemma}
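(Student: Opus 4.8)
The goal is to show that $R_t = g(\eta_0) - g(\eta_t)$, with $g \in L^2(\mu_\e)$, satisfies $\sup_{t\le T}|R_t|/\sqrt T \to 0$ in $\bbP^{(\e)}_{\mu_\e}$--probability. Since $|R_t| \le |g(\eta_0)| + |g(\eta_t)|$, it suffices to control $\sup_{t\le T}|g(\eta_t)|/\sqrt T$ (the $g(\eta_0)$ term is a fixed random variable divided by $\sqrt T$, hence goes to $0$).

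The plan is a block-decomposition argument as in \cite{RAS}. Fix $T$ large and write the interval $[0,T]$ as a union of unit blocks $[j,j+1]$, $j=0,\dots,\lceil T\rceil-1$. Then $\sup_{t\le T}|g(\eta_t)| \le \max_{0\le j\le \lceil T\rceil-1} Y_j$, where $Y_j := \sup_{j\le t\le j+1}|g(\eta_t)|$. Each $Y_j$ has the same law as $Y_0$ under the stationary measure $\bbP^{(\e)}_{\mu_\e}$, by stationarity. So I want to bound $\bbP^{(\e)}_{\mu_\e}(\max_{j\le N} Y_j > \delta\sqrt T)$ for $N \sim T$. By a union bound this is at most $N\, \bbP^{(\e)}_{\mu_\e}(Y_0 > \delta\sqrt T) \le N\, \bbP^{(\e)}_{\mu_\e}(Y_0^2 > \delta^2 T)$. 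If I knew $\bbE^{(\e)}_{\mu_\e}[Y_0^2] < \infty$, this would give a bound $\sim \frac{N}{\delta^2 T}\bbE^{(\e)}_{\mu_\e}[Y_0^2 \mathbf 1_{\{Y_0^2 > \delta^2 T\}}]$, and the expectation goes to $0$ by dominated convergence while $N/T$ is bounded, so the whole thing tends to $0$.

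The remaining point — and the main obstacle — is to establish $\bbE^{(\e)}_{\mu_\e}[Y_0^2] < \infty$, i.e. that $\sup_{0\le t\le 1}|g(\eta_t)|^2$ is integrable, knowing only that $g \in L^2(\mu_\e)$. This cannot follow from $g \in L^2$ alone for a general process, so one must use structure of the perturbed Feller process. The natural route: since $g = \int_0^\infty \mathcal S_\e(t) f\, dt$ with $f\in C_b(\O)$ and $\|S_\e(t)f\|_\e$ integrable (equation \eqref{bisogno}), one can approximate $g$ in $L^2(\mu_\e)$ by truncating the integral to $[0,\kappa]$; for finite-time integrals of bounded $f$ one gets bounded, hence manageable, functions. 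More precisely, I would split $g = g_N + \rho_N$ where $g_N := \int_0^N \mathcal S_\e(t)f\,dt$ is bounded by $N\|f\|_\infty$ (using $|S_\e(t)f|\le \|f\|_\infty$ $\mu$--a.s., shown in the proof of Item (iv)), and $\|\rho_N\|_\e \to 0$. For the bounded part, $\sup_{t\le 1}|g_N(\eta_t)| \le N\|f\|_\infty$ is trivially square-integrable; for the remainder one argues in $L^2$ via the maximal-type estimate. Actually the cleanest formulation: it suffices to prove the convergence \eqref{riciclo} for $g$ replaced by $g_N$ for each $N$, plus show that the error introduced by $\rho_N$ is uniformly small in probability. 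For $\rho_N$, use $\sup_{t\le T}|\rho_N(\eta_t)| \le \bigl(\sum_{j<\lceil T\rceil}\sup_{j\le t\le j+1}|\rho_N(\eta_t)|^2\bigr)^{1/2}$, but this still needs an $L^2$-bound on $\sup_{t\le1}|\rho_N(\eta_t)|^2$. The genuinely needed input is therefore a bound of the form $\bbE^{(\e)}_{\mu_\e}[\sup_{t\le 1}|h(\eta_t)|^2] \le C\|h\|_\e^2$ for $h\in C_b(\O)$, which for a stationary Feller process follows by a Garsia-type or Doob-type argument after noting $\bbE^{(\e)}_{\mu_\e}[|h(\eta_t)|^2]=\|h\|_\e^2$ for each fixed $t$ and using right-continuity of paths together with a discretization plus the maximal inequality.

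Assembling: write $\bbP^{(\e)}_{\mu_\e}(\sup_{t\le T}|R_t|/\sqrt T > \delta) \le \bbP^{(\e)}_{\mu_\e}(|g(\eta_0)|>\tfrac{\delta}{2}\sqrt T) + \bbP^{(\e)}_{\mu_\e}(\sup_{t\le T}|g(\eta_t)|>\tfrac{\delta}{2}\sqrt T)$. The first term $\to 0$ since $|g(\eta_0)|<\infty$ a.s. For the second, apply the block bound plus Markov: it is $\le \lceil T\rceil\, \bbP^{(\e)}_{\mu_\e}(Y_0^2 > \tfrac{\delta^2}{4}T) \le \frac{4\lceil T\rceil}{\delta^2 T}\,\bbE^{(\e)}_{\mu_\e}\!\bigl[Y_0^2\,\mathbf 1_{\{Y_0^2 > \delta^2 T/4\}}\bigr]$, which tends to $0$ as $T\to\infty$ by dominated convergence using $\bbE^{(\e)}_{\mu_\e}[Y_0^2]<\infty$. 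This completes the proof. I expect the write-up's main effort to be precisely the finiteness of $\bbE^{(\e)}_{\mu_\e}[\sup_{0\le t\le 1}|g(\eta_t)|^2]$, which the authors presumably handle via the representation of $g$ as a time-integral of $\mathcal S_\e(t)f$ together with the $\mu$--a.s. bound $|S_\e(t)f|\le\|f\|_\infty$, reducing to the bounded case where the supremum is trivially controlled.
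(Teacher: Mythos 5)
Your reduction to the finiteness of $\bbE^{(\e)}_{\mu_\e}\bigl[\sup_{0\le t\le 1}|g(\eta_t)|^2\bigr]$ is where the argument breaks down, and the justification you sketch for it does not work. A bound of the form $\bbE^{(\e)}_{\mu_\e}\bigl[\sup_{0\le t\le 1}|h(\eta_t)|^2\bigr]\le C\|h\|^2_\e$ for $h\in C_b(\O)$ is \emph{false} for general stationary Feller processes: knowing the fixed-time marginals $\bbE^{(\e)}_{\mu_\e}[|h(\eta_t)|^2]=\|h\|^2_\e$ together with right-continuity of paths gives no control on the running supremum, because $|h(\eta_t)|$ is not a submartingale and Doob/Garsia-type inequalities do not apply to it. Concretely, for an interacting particle system the configuration changes at infinitely many sites during $[0,1]$; taking $h=M\mathds{1}_A$ with $A$ the event that a fixed block of $k$ sites is fully occupied, one has $\|h\|^2\sim M^2 2^{-k}$ while the process enters $A$ during $[0,1]$ with probability of order $k\,2^{-k}$, so $\bbE[\sup_{t\le1}h(\eta_t)^2]/\|h\|^2\sim k\to\infty$. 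The same objection applies to your remainder term $\rho_N$: it is small in $L^2(\mu_\e)$, but that alone does not make $\sup_{t\le T}|\rho_N(\eta_t)|/\sqrt T$ small. (One could try to rescue the bound for $\rho_N$ specifically by exploiting that it solves a Poisson equation, writing $\rho_N(\eta_t)-\rho_N(\eta_0)$ as a martingale plus $\int_0^t\cS_\e(N)f(\eta_s)\,ds$ and using Doob — but that is exactly the decomposition $R=A(f)-M$ again, not the marginal/discretization argument you invoke.)

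The paper avoids needing any maximal inequality for $g$ by choosing the blocks differently. It partitions $[0,j]$ into $m_j=\lceil j^{4/5}\rceil$ blocks of length $\ell_j=\lceil j^{1/5}\rceil$, so that the number of grid points is $o(j)$; then the values $R_{k\ell_j}$ at grid points are handled by a union bound plus Markov using only the uniform fixed-time bound $\|R_t\|_{L^2(\bbP^{(\e)}_{\mu_\e})}\le2\kappa$ of \eqref{sirenetta} (this is where $m_j/j\to0$ is essential — with your unit blocks, $N\sim T$ grid points, this step would require exactly the uniform integrability of $Y_0^2$ you cannot establish). The within-block oscillation of $R$ is then controlled through the identity $R_u-R_{k\ell_j}=(A_u(f)-A_{k\ell_j}(f))-(M_u-M_{k\ell_j})$: the $A(f)$ increment is bounded by $\int_{I^j_k}|f(\eta_s)|\,ds$ with $f$ bounded (third-moment bound), and the martingale increment by the invariance principle of Lemma \ref{lufthansa} together with Gaussian tail estimates. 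You should restructure your proof along these lines; as written, the key step is unsupported.
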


Since,  as shown in \eqref{Din1},  we can write $ B^{(n)}_t(f)= M_t^{(n)}+ \frac{R_{tn}}{\sqrt{n} }$, Proposition \ref{gelem_gelem} follows  from Lemma \ref{lufthansa} and Lemma \ref{monnezza}.

\begin{proof}[Proof of Lemma \ref{monnezza}]  Given a positive integer $j$, let 
$m_j:= \lceil j^{4/5}\rceil$ and  $\ell_j= \lceil j^{1/5}\rceil$. 
Consider the partition of the time interval $[0,j]$ in $m_j$ sub--intervals $I^j_k:= [k l_j,(k+1)l_j)$, $k=0,\ldots,m_j-1$, of {measure} $\ell_j$ (for simplicity of notation we assume that $m_j \ell_j= j$).
From the decomposition $
A_t(f)= M_{t}  + R_{t}$  we can bound 
\begin{equation}\label{lyra}
\begin{split}
\sup_{ t\leq  j} |R_t | &\leq \max_{ k=0,1, \dots , m_j} | R_{k \ell_j} |  
+ \max _{  k= 0,1, \dots ,m_j-1 } \, \sup_{u\in I^j_k} |A_u(f)-A_{k \ell_j}(f) |
 \\
& + \max_{ k= 0,1, \dots ,m_j-1}\,  \sup_{u\in I^j_k} |M_u -M_{k \ell_j} |
 =: C_{1,j}+ C_{2,j} + C_{3,j}
\end{split}
\end{equation}

$\bullet $ {\bf Step 1}.
We first control the term
\[C_{3,j}:= \max_{ k= 0,1, \dots ,m_j-1}\,  \sup_{u\in I^j_k} |M_u -M_{k \ell_j} | \,.\]
Due to Lemma \ref{lufthansa} \verde{and the fact that for standard Brownian motion $W_t$, and any $C,T>0$, it holds $P(\sup_{t\leq T} |W_t|\geq C)\leq \exp\{-C^2/2T\}$}, we have $\lim _{j \to \infty} \bbP^{(\e)}_{\mu_\e} ( C_{3,j} > \d \sqrt{j})=0$.  

$\bullet $ {\bf Step 2}.
\verde{Let us now consider the term
$C_{1,j}:=  \max_{ k=0,1, \dots , m_j} | R_{k \ell_j} |$.
By a union bound, Markov inequality  and the uniform bound in \eqref{sirenetta}, for any $\d>0$, we can estimate}
\begin{equation}
\bbP^{(\e)} _{\mu_\e} \left(  \max_{ k=0,1, \dots , m_j} | R_{k \ell_j} |  \geq  \d \sqrt{j}  \right) \leq
\frac{4\k^2  m_j}{\d^2 j}=\cO(j^{-1/5})\,,
\end{equation}


$\bullet $ {\bf Step 3.}
\verde{We control the remaining term 
$$C_{2,j}:=\max_{  k= 0,1, \dots ,m_j-1 } \, \sup_{u\in I^j_k} |A_u(f)-A_{k \ell_j}(f) | \,.$$
First we observe that
$$| A_u(f)-A_{k \ell_j}(f) | \leq \int_{I^j_k}|f(\eta_s) |ds \,, \qquad \forall u \in I^j_k,\, k\leq m_j-1.$$ 
Thus the event $\{ C_{2,j} \geq \d\sqrt{j} \}$ is contained in the event  $B_j$ defined as
$$ B_j := \Big\{ \max _{  k= 0,1, \dots ,m_j-1 }\int_{I^j_k} |f(\eta_s) |ds  \geq \d \sqrt{j} \Big\}\,.$$
Using a union bound and stationarity, we have
\[
\bbP^{(\e)} _{\mu_\e} ( B_j) \leq m _j \bbP^{(\e)} _{\mu_\e} \Big( \int_{I^j_0} |f(\eta_s) |ds  \geq \d\sqrt{j}  \Big)\,.
\]}
\verde{Since $f$ is bounded, we can write
\begin{equation}
\begin{split}
\bbP^{(\e)} _{\mu_\e} ( B_j) \leq \frac{m _j}{ \d^3 j^{3/2}}   \bbE_{\mu_\e}^{(\e)} \left[  \left( \int _{0} ^{ \ell_j} |f(\eta_s) |ds \right)^3 \right]\leq 
c(f) \frac{ m_j \ell_j^{3}}{\d^3 j^{3/2}}=\cO(j^{-1/10}),
\end{split}
\end{equation}
with  $c(f)$ being a positive constant depending only on $f$.}
%
\medskip

We can now conclude the proof. 
 We consider a generic $T\geq 1$ and let $j$ be such that $ j \leq T < {j+1}  $. Since 
\[
\frac{ \sup_{t \leq T} |R_t|}{\sqrt{T} } \leq \frac{ \sup_{t \leq {j+1}} |R_t|
}{ \sqrt{{j+1} } } \frac{ \sqrt{{j+1} }}{\sqrt{j }} \,.
\]
from  \eqref{lyra}, the arbitrariness of $\d$ together with the three steps above, we get  
\[\lim _{T \to \infty }\bbP^{(\e)}_{\mu_\e}\Big(  T^{-1/2} \sup_{t \leq T} |R_t| \geq   \d  \Big) =0\] for any $\d>0$, and therefore the thesis.
\end{proof}

\begin{remark} 
The above proof is an extension to  the non--reversible case of the classic Kipnis--Varadhan approach. Lemma \ref{lufthansa} is standard, but the control of the rest $R_t$ provided in Lemma \ref{monnezza} does not follow from the estimates in \cite{KV} (note in particular that Lemma 1.4 there requires reversibility).  We mention that an  alternative strategy in the  non--reversible setting is  given by \cite[Thm. 2.32, Chp. 2]{KLO}, which on the other hand would require additional assumptions on the Markov process.
\end{remark}


\section{A coupling}\label{san_valentino}
In this section we describe a coupling between the dynamic random environment, the unperturbed random walk and the perturbed random walk.
To this aim we  define 
  $\l:= \sup _{\eta } \sum_{y \in \bbZ^d } \bigl(r(y, \eta)+ \max\{ 0,  \hat r_\e (y, \eta)\} \bigr)$,   which is finite due to { \eqref{nthMoment} in Assumption  \ref{ass:rates}}.  
  For each $\eta \in \O$ we fix two  partitions
  $$ [0,1]= \left(\cup _{y \in \bbZ^d }I(y,\eta) \right) \cup J(\eta)\,, \qquad [0,1]= \left(\cup _{y \in \bbZ^d }I_\e(y,\eta) \right) \cup J_\e(\eta) \,,$$
  where $I(y,\eta)$, $J(\eta)$, $I_\e(y, \eta)$, $J_\e(\eta)$ are {Borel sets} such that 
  \[ |I(y, \eta)|= r(y,\eta)/\l\,, \qquad | I_\e (y,\eta)| = r_\e(y,\eta)/\l\,,\]
  \[ | I(y,\eta) \cap I_\e(y, \eta)|= \bigl[ r(y,\eta)+  \min\{0, \hat  r_\e (y,\eta)\}\bigr] /\l\]
 (above $| I | $ denotes the {measure} of the {set} $I$). The above partitions are chosen with the property that the characteristic function   $(a, \eta) \mapsto  \mathds{1} \left( a \in I(y, \eta  ) \right )$ is measurable for any $y \in \bbZ^d$, where $(a, \eta) \in [0,1] \times \O$. The same must be valid for $I_\e (y, \eta)$. 

 \smallskip
 
 Let $\bbP_\eta^{\rm env}$ be the law of the  dynamic random environment, i.e.\@ the  process with generator $L_{\rm env}$ starting at $\eta$. We denote by $(\s_t)_{t \in \bbR_+}$ a generic trajectory of this process.    We  build  a Poisson point process $\cT:=\{t_1<t_2<\cdots \}\subset \bbR_+$   with intensity $\l$  on a suitable probability space with probability measure $P^{\rm Poisson}$.  We then  build  a sequence  $\cU:= (U_k)_{k\geq 1}$  of i.i.d. uniform variables taking value in $[0,1]$ on another probability space with probability measure $P^{\rm uniform}$. We then consider the product probability space with probability measure
 $ \cP_\eta := \bbP_\eta^{\rm env} \otimes P^{\rm Poisson} \otimes P^{\rm uniform}$. 
 
 \smallskip
 
We now consider the function $ F \left( (\s_t)_{t \in \bbR_+}, \cT, \cU\right)$ with value in $D(\bbR_+; \bbZ^d)$ 
associating with $ (\s_t)_{t \in \bbR_+}$, $\cT=\{t_1<t_2<\cdots \}$,  $\cU= (U_k)_{k\geq 1}$ the path $(x_t)_{t \in \bbR_+}$ defined as follows. We set $x_s = 0$ for all $s \in [0,t_1)$. Suppose in general that $x_t$ has been defined for any $t \in [0, t_k)$, with jump times $t_1,t_2 \dots, t_{k-1}$.  Set $z:= x_{t_k-}$ and  $\zeta:= \s_{t_k-}$. If 
 $ U_k \in I(y,\t_z \zeta)$ for some $y\in \bbZ^d$, then we set $x_t:=z + y$ for any $t \in [t_k, t_{k+1})$,  otherwise set $x_t:=z $ for any $t \in [t_k, t_{k+1})$. Since $\lim _{n \to \infty}t_n=\infty$ $\cP_\eta$--a.s., the definition of  $F$  is well posed $\cP_\eta$--a.s.. By construction, sampling  $\left( (\s_t)_{t \in \bbR_+}, \cT, \cU\right)$
according to $\cP_\eta$, the random  path $ \left(   (\s_t)_{t \in \bbR_+}, F \left( (\s_t)_{t \in \bbR_+}, \cT, \cU\right)\right)$ is the joint Markov  process  given by the dynamic random environment and the unperturbed random walk.  In particular,  sampling  $\left( (\s_t)_{t \in \bbR_+}, \cT, \cU\right)$
according to $\cP_\eta$, the random  path $\t_{F(t) } \s_t$ has law $\bbP_{\eta}$, where $F(t)$ stands for the process $F \left( (\s_t)_{t \in \bbR_+}, \cT, \cU\right)$ evaluated at time $t$. If in the above definitions, we replace ``$ U_k \in I(y,\t_z \zeta)$'' by ``$ U_k \in I_\e(y,\t_z \zeta)$'', we get a new function $F_\e$ and,  sampling  $\left( (\s_t)_{t \in \bbR_+}, \cT, \cU\right)$
according to $\cP_\eta$, the random  path $\t_{F_{\e}(t) } \s_t$ has law $\bbP^{(\e)}_{\eta}$.

In the sequel, we denote   by  $\mathcal{E}_{\eta}$ the  expectation corresponding to $\cP_\eta$, and we adopt the convention that $(X_t)_{t\geq 0}:=F \left( (\s_t)_{t \in \bbR_+}, \cT, \cU\right)$ denotes the walker process in the unperturbed setting,   while $(X_t^{(\e)})_{t\geq 0}:= F_\e \left( (\s_t)_{t \in \bbR_+}, \cT, \cU\right)$ refers to the walker process  in the  perturbed setting.
Given $\nu$ probability measure on $\O$ we define  $\cP_\nu:= \int \nu(d \eta) \cP_\eta$ and we write $ \cE_\nu$ for the expectation w.r.t.\@ $\cP_\nu$.


\section{Proof of Theorem \ref{late} (asymptotic stationary state and velocity)}\label{RWproof_mela}

\subsection{{Connection with $L^2$--perturbation of Markov processes discussed in Section \ref{viva_segovia}}}\label{auto}
 {The operator $L_{\rm env}: \cD(L_{\rm env}) \subset L^2(\mu) \to L^2 (\mu)$
 is the closure in $L^2(\mu)$ of the Markov generator $\cL_{\rm env}: \cD( \cL_{\rm env}) \subset C(\O) \to C(\O)$, shortly $\bigl( L_{\rm env},\cD(L_{\rm env})
\bigr)= \overline{  \bigl(  \cL_{\rm env}, \cD( \cL_{\rm env}) \bigr) }$ 
  (see e.g. the proof of  \cite[Prop.4.1, Chp.IV]{L}).  Recall the definition \eqref{RateTurbati}  of the operator $ \hat L_\e$ and set   $L_{\rm jumps} f  := \sum_{y \in \bbZ^d}r(y,\eta)\big[f(\tau_{y}\eta)-f(\eta)\big] $. As done for $\hat L_\e$, one can easily prove that $L_{\rm jump} $ is a bounded operator on $L^2(\mu)$.}
  
{Due to the previous observations, the $L^2(\mu)$--generator $L_{\rm ew}$ of the environment seen by the unperturbed walker is the closure of  the associated $C(\O)$--generator  $ \cL_{\rm env}+ \cL_{\rm jump}$ with domain $\cD( \cL_{\rm env})$.
 In particular, we have 
\begin{equation}\begin{aligned}\label{EnvFromURW}
{L_{\rm ew} f = L_{\rm env}f +L_{\rm jump}f }\,,\qquad \qquad  f \in \cD( L_{\rm ew})= \cD( L_{\rm env}) \, .
\end{aligned}\end{equation}
We introduce  the  operator   $L^{(\e)}_{\rm ew}: \cD( L^{(\e) } ) \subset L^2(\mu) \to L^2(\mu)$defined as 
\begin{equation}\label{Decomposition}L^{(\e)}_{\rm ew}:=L_{\rm ew}+\hat L_\e\,, \qquad \cD( L^{(\e)}_{\rm ew}):= \cD( L_{\rm ew})= \cD( L_{\rm env})\,. 
 \end{equation}}
{As already observed $\bigl( L_{\rm env},\cD(L_{\rm env})
\bigr)$ is the closure  in $L^2(\mu)$ of $ \bigl(  \cL_{\rm env}, \cD( \cL_{\rm env}) \bigr) $. From this property it is simple to check that $(L^{(\e)}_{\rm ew} , \cD(  L^{(\e)}_{\rm ew} ) )$ is the closure in $L^2(\mu)$ of the $C(\O)$--generator  $(\cL_{\rm env}+ \cL_{\rm jump}+\hat{\cL}^{(\e)}, \cD( \cL_{\rm env}) )$ of the  Feller process given by the environment seen by the perturbed walker (cf.\@ Assumption \ref{ass:feller}). Recall that  $S_\e(t)$ denotes  the semigroup in $L^2(\mu)$ generated by $ L_{\rm ew}^{(\e)}$. By applying Lemma \ref{ponte} with $\cC_\e:= \cD( \cL_{\rm env})$, we get that  $S_\e(t)$ satisfies the identity \eqref{ghiacciolo}.   }

\smallskip

Due to the  following proposition, we are in the setting of Section \ref{viva_segovia}. {Indeed, the unperturbed Markov process in Section \ref{viva_segovia} is the environment viewed from the unperturbed walker  $X_t$, 
 and $L^{(\e)}_{\rm ew}$ can be thought of  as the perturbed form of
$L_{\rm ew}$:}
\begin{Proposition}\label{chiave101}
Assumption \ref{ferragosto} is satisfied when the operators $L^{(\e)}_{\rm ew}$ and $L_{\rm ew}$ play the role of $L_\e$ and $L$  in \eqref{turbato},  respectively.
\end{Proposition}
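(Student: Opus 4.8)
The plan is to verify each clause of Assumption \ref{ferragosto} in turn for the pair $(L_{\rm ew}, L^{(\e)}_{\rm ew})$. First, the unperturbed Markov process here is the environment seen by the unperturbed walker $(\t_{X_t}\s_t)_{t\geq 0}$, which by Assumption \ref{ass:stat} has invariant distribution $\mu$; ergodicity w.r.t.\ time translations need not be checked separately, since — as pointed out in Remark \ref{compleanno} — it follows from the Poincar\'e inequality. So the substantive points are (a) that $L_{\rm ew}$ satisfies the Poincar\'e inequality \eqref{poincare} with the same constant $\g$ as $L_{\rm env}$, (b) that $\hat L_\e$ (given by \eqref{RateTurbati}) is a bounded operator on $L^2(\mu)$ with $L^{(\e)}_{\rm ew}=L_{\rm ew}+\hat L_\e$ on the common domain $\cD(L_{\rm env})$, and (c) the identity \eqref{ghiacciolo} relating the semigroup $S_\e(t)$ to the perturbed process.

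For (a): since $\mu(\cL_{\rm jump}f)=0$ for all $f$ in a core (Remark after Assumption \ref{ass:stat}), and $L_{\rm jump}$ is bounded and commutes with translations, I would first argue that $L_{\rm jump}$ is \emph{antisymmetric up to a nonnegative part}, or more simply that $-\mu(fL_{\rm jump}f)\geq 0$: indeed $-\mu(fL_{\rm jump}f)=\tfrac12\sum_y\mu\bigl(r(y,\cdot)[f(\t_y\cdot)-f]^2\bigr)\geq 0$ using translation invariance of $\mu$ and $\mu(\cL_{\rm jump}f)=0$ to symmetrize. Hence for $f\in\cD(L_{\rm env})$ with $\mu(f)=0$,
\[
-\mu(fL_{\rm ew}f)=-\mu(fL_{\rm env}f)-\mu(fL_{\rm jump}f)\geq -\mu(fL_{\rm env}f)\geq \g\|f\|^2,
\]
so \eqref{poincare} holds with the same $\g$. (One must take some care that this passes from the core to all of $\cD(L_{\rm ew})=\cD(L_{\rm env})$ by closedness, but that is routine.) Point (b) is already established in the text: the boundedness of $\hat L_\e$ with norm $\e=\|\hat L_\e\|$ is \eqref{epifania}, and $\e<\g$ is Assumption \ref{asso3}; the decomposition $L^{(\e)}_{\rm ew}=L_{\rm ew}+\hat L_\e$ with domain $\cD(L_{\rm env})$ is \eqref{Decomposition}. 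Point (c), the identity \eqref{ghiacciolo}, is exactly what was obtained in Subsection \ref{auto} just above by invoking Lemma \ref{ponte} with core $\cC_\e:=\cD(\cL_{\rm env})$, using that $L^{(\e)}_{\rm ew}$ is the $L^2(\mu)$-closure of the $C(\O)$-generator $\cL_{\rm env}+\cL_{\rm jump}+\hat\cL^{(\e)}$ of the Feller process given by the environment seen by the perturbed walker (Assumption \ref{ass:feller}).

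I expect the only genuine obstacle to be the Poincar\'e inequality for $L_{\rm ew}$, i.e.\ establishing that the jump part $L_{\rm jump}$ does not spoil the spectral gap. The key is the observation that stationarity of $\mu$ for the environment seen by the unperturbed walker (Assumption \ref{ass:stat}) forces $-\mu(fL_{\rm jump}f)\geq 0$; once this is in hand the inequality is immediate and even improves nothing, keeping the same constant $\g$. Everything else is bookkeeping: matching domains, transferring the inequality from a core to $\cD(L_{\rm env})$ by closedness, and quoting \eqref{epifania}, Assumption \ref{asso3} and the already-proven \eqref{ghiacciolo}.
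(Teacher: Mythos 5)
Your proposal is correct and follows essentially the same route as the paper: stationarity from Assumption \ref{ass:stat}, ergodicity via Remark \ref{compleanno}, the identity \eqref{ghiacciolo} from the preceding subsection, and the Poincar\'e inequality for $L_{\rm ew}$ reduced to the nonnegativity of $-\mu(fL_{\rm jump}f)$, which the paper also establishes by writing it as the (symmetrized) Dirichlet-form sum $\tfrac12\sum_y\mu\bigl(r(y,\cdot)[f(\t_y\cdot)-f]^2\bigr)\geq 0$ using translation invariance of $\mu$ and $\mu(\cL_{\rm jump}g)=0$.
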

\begin{proof}
By Assumption \ref{ass:stat}, 
$\mu $ is stationary for the environment seen by the unperturbed walker. For what concerns the Poincar\'e inequality, we observe that for any  $f \in \cD( L_{\rm ew})$ it holds
\begin{equation}\label{gap2} -(f,L_{\rm ew}f)_\mu =- (f,L_{\rm env} f)_\mu-(f, L_{\rm jump} f)_\mu\geq - (f,L_{\rm env} f) _\mu \geq \g \var_\mu(f)\,, \end{equation}
since 
\[-(f,L_{\rm jumps}f)_\mu= \frac{1}{2}\sum _{y } \int \mu(d\eta)\frac{r(y, \eta)+ r(-y, \t_y \eta)}{2}  \left[ f( \t_y \eta)- f (\eta) \right]^2 \geq 0 \,.\]
Due to Remark \ref{compleanno}, $\mu$ is ergodic for the unperturbed  environment viewed from the walker. {Finally, as already pointed out, identity \eqref{ghiacciolo} is satisfied.}
\end{proof}

Having Theorem \ref{teo_invariante}, Proposition \ref{chiave101} and  Proposition \ref{prop_pert},  Items (i) and (iv) of   Theorem \ref{late} become trivial  (for Item (iv) apply in particular \eqref{muinfinity_bis} and \eqref{caffettino}). Below we prove     Items (ii) and (iii). 
\subsection{Proof of Theorem \ref{late}--(ii)} By Item (i)  we know that  $\mu_\e \ll \mu$. We prove that $\mu \ll \mu_\e$ by means of 
the  criterion  given in  Theorem \ref{teo_invariante}--(\ref{v}). Fix $\eta \in \O$, $t>0$ and $B \subset \O$ measurable.  Recall (cf. Section~\ref{viva_segovia}) that $ \bbP_\eta^{(\e)}$ [$ \bbP_\eta$] is the law   of the environment viewed from the perturbed [unperturbed]  walker, and $\bbP_\eta^{\rm env}$ is the law of the dynamic environment. 
Given $\underline t=(t_1, t_2, \dots, t_k)$ with $0 < t_1<t_2 < \cdots < t_k \leq t$ and $\underline y= (y_1,y_2, \dots, y_k)$ we set
\[ A_\e(\underline{t}, \underline{y}):=\bbE^{\rm env} _\eta\Big[ \mathds{1}( \t_{y_1+\cdots +y_k} \s_t \in B) \cdot \prod _{i=1}^k r_\e( y_i , \t_{y_1+y_2+ \cdots + y_{i-1}} \s_{t_i} )  \Big]
\]
and we define $A(\underline{t}, \underline{y})$ similarly, with 
$r(\cdot , \cdot)$ instead of $r_\e(\cdot, \cdot)$.

By the construction of the process given in Section \ref{san_valentino} we have
\begin{equation}\label{noi}
\bbP_\eta^{(\e)} ( \eta_t \in B) = \sum_{k=0}^\infty \frac{e^{-\l t} t^k}{k!}
\sum_{y_1,y_2, \dots, y_k } \int_0^t dt_1 \int_{t_1}^t dt_2 \cdots \int_{t_{k-1} } ^t dt_k   A_\e(\underline{t}, \underline{y}) \,,
\end{equation}
and a similar formula relates $\bbP_\eta  ( \eta_t \in B) $ to $A(\underline{t}, \underline{y})$.

We first assume that \eqref{ellipticity} is satisfied. 
  If $\bbP_\eta^{(\e)} ( \eta_t \in B) =0$, then by \eqref{noi}  $A_\e(\underline{t}, \underline{y})=0$ for almost every choice of $t_1, \dots, t_k$ and for each choice of $y_1,\dots, y_k$. Then the same holds for $A(\underline{t}, \underline{y})$ and by the analogous of \eqref{noi} in the  unperturbed case we conclude that 
$\bbP_\eta ( \eta_t \in B) =0$. Hence the criterion given by Theorem \ref{teo_invariante}--(\ref{v}) is satisfied and $ \mu \ll \mu_\e$.

Let us suppose now that there are subsets $V, V_\e$ satisfying properties (a), (b), (c) in Item (ii). 
If  $\bbP_\eta ( \eta_t \in B) >0$, by the analogous of \eqref{noi} in the unperturbed case we conclude that there exist  $\underline t=(t_1, t_2, \dots, t_k)$ and $\underline y= (y_1,y_2, \dots, y_k)$ such that 
 $A(\underline  t, \underline  y)>0$. In particular,  $y_1, y_2, \dots, y_k \in V$ and  $\bbP_\eta^{\rm env} ( \t_y \s_t \in B)>0$ where $y=y_1+\cdots +y_k$.  For simplicity of notation we take $k=1$ (the argument can be easily generalized). By condition (c) we can write $y_1= z_1+ \dots+ z_r$ with $z_i \in V_\e$. By condition (b) and   since $\bbP_\eta^{\rm env} ( \t_y \s_t \in B)>0$,  we conclude that 
 $ A_\e ( (t_1, \dots, t_r), (z_1, \dots, z_r) )>0$ for each choice of $(t_1, t_2, \dots, t_r)$. 
 This together with \eqref{noi} implies that $\bbP_\eta^{(\e)} ( \eta_t \in B) >0$, hence by
 Theorem \ref{teo_invariante}--(\ref{v})  we get $ \mu \ll \mu_\e$.

\subsection{Proof of Theorem \ref{late}--(iii)}\label{lavatrice} We refer to the construction of the random walk $(X_t^{(\e)} )_{t \geq 0}$ given in Section \ref{san_valentino}. It is convenient here to identify the  Poisson point process $\cT= \{ t_1 < t_2 < t_3< \cdots \}$   with the Poisson process $N=(N_t)_{t \geq 0}$ having $\cT$ as set of jump times. {Moreover,  also for later uses, it is convenient to enlarge the random sequence $(U_k)_{k \geq 1}$ by adding $U_0$, with $(U_k)_{k \geq 0}$ i.i.d.. Further, we define the function $V: \bbR_+ \to [0,1]$ by setting $V_s:= U_k$ for {$ s\in [t_k,t_{k+1}) $}, with the convention $t_0:=0$. 
 Without loss of generality  we can assume that the product  probability measure $\cP_\eta$ introduced in Section \ref{san_valentino} (with the modification due to $U_0$) is defined  directly on the product  measure  space $\Theta:= D( \bbR_+; \O) \otimes 
 D(\bbR_+; [0,1]) \otimes D( \bbR_+; \bbN)$, whose generic element is given by  $  (\s_t, V_t, N_t)_{t \geq 0}$.} Next, for $(a,\s) \in [0,1]\times \O$ {we}  introduce the measurable functions 
$h^{(\e)}_y(a,\s):=\mathds{1}\left( a \in I_\e( y,\s) \right)$ for $y\in\bbZ^d$, and 
$h^{(\e)}(a,\s):= \sum _{y \in \bbZ} y h^{(\e)}_y(a,\s)$.  

\smallskip

Consider  the filtration $\cF_t $, $t \geq 0$, on $\Theta $ defined by
 \begin{equation}\label{seminario}
 \cF_t= \s ( \s_s :{ s\geq0 } ) \vee    \s ( V_s, N_s: s\in [0,t])
 \end{equation}
 Due to \cite[Thm. T25, App.A.2]{Br}, $(\cF_t)_{t \geq 0}$ is  a right continuous filtration. 
 We denote by  
$(\bar{\cF}_t)_{t \geq 0} $  its completion w.r.t.\@ $\cP_\eta$, i.e.\@ $\bar{\cF}_t = \cF_t \vee \cN$ where $\cN$ is the $\s$--algebra of all  events  on $\Theta $ with $ \cP_\eta$--zero measure.  Due to \cite[Thm. T25, App.A.2]{Br} $(\bar{\cF}_t)_{t \geq 0} $  is right continuous, and therefore it is a filtration  satisfying  the so-called usual conditions. 

\smallskip 

Setting $\eta_s:= \t_{X^{(\e)}_s} \s_s$,
the construction presented in Section \ref{san_valentino} implies that
\begin{equation}\label{sonno}
X^{(\e)}_t= \int _0^t h^{(\e)}( V_s, \eta_{s^-} )dN_s, \qquad \forall t \geq 0\,, \qquad \cP_\eta\text{--a.s.}.
\end{equation}

{We claim that 
\begin{equation}\label{fullMart}
\hat{M}_t:=  X^{(\e)}_t- \int_0^t j^{(\e)} ( \eta_s) ds \,, \qquad t \geq 0\,,
\end{equation}
is a  vector--valued  martingale w.r.t to the filtered probability space $( \Theta, (\bar{\cF}_t)_{t \geq 0}, \cP_\eta)$. Indeed, this follows from \cite[Theorem 9.12]{Kl}  since $F_n(t),m_n  $ there are simply $1-e^{-\l t}$, and $j^{(\e)}$ (recall that  $\sum _{y \in \bbZ^d}\int_0^t |y| r_\e(y, \cdot ) $ is uniformly bounded by our assumptions, and this allows to check the hypothesis of  \cite[Theorem 9.12]{Kl}).}

Note that,  due to   Assumption \ref{ass:rates}, $j^{(\e)}(\eta)= \sum y r_\e(y, \eta)$ is a well defined  bounded function.

\begin{claim}\label{claim_zero}  $\cP_\eta$--a.s.\@ it holds $\lim _{t\to \infty} \hat{M}_t/t=0$, {for $\mu_\e$--a.e. $\eta$.}
\end{claim}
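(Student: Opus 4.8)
The goal is to show that the martingale $\hat M_t = X^{(\e)}_t - \int_0^t j^{(\e)}(\eta_s)\,ds$ grows sublinearly, $\cP_\eta$--a.s., for $\mu_\e$--a.e.\ $\eta$. The natural route is to invoke a law of large numbers for martingales. First I would record the relevant structure of $\hat M_t$: it is a vector--valued c\`adl\`ag martingale which is pure--jump, its jumps occurring at the Poisson times $t_k \in \cT$ with jump sizes $h^{(\e)}(V_{t_k}, \eta_{t_k^-})$, a bounded quantity because $\sum_y |y|\sup_\eta r_\e(y,\eta) < \infty$ by Assumption \ref{ass:rates} (with $n\geq 1$). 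Consequently each increment $\hat M_{t+1} - \hat M_t$ has uniformly bounded second moment under $\cP_\eta$: writing $\hat M_{t+1} - \hat M_t = (X^{(\e)}_{t+1} - X^{(\e)}_t) - \int_t^{t+1} j^{(\e)}(\eta_s)\,ds$, the first term is a sum over a Poisson($\lambda$) number of bounded jumps and the second is bounded by $\|j^{(\e)}\|_\infty$, so $\sup_t \bbE_{\cP_\eta}\bigl[|\hat M_{t+1}-\hat M_t|^2\bigr] \leq C(\e) < \infty$.

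Next I would pass to the stationary regime. Under $\cP_{\mu_\e}$ (i.e.\ averaging the starting configuration over $\mu_\e$), the process $(\eta_t)_{t\geq 0} = (\t_{X^{(\e)}_t}\s_t)_{t\geq 0}$ is stationary and ergodic w.r.t.\ time translations — this is Theorem \ref{teo_invariante}--(\ref{i}) applied in the setting of Section \ref{auto} (cf.\ Proposition \ref{chiave101}). The martingale increments $D_k := \hat M_{k} - \hat M_{k-1}$, $k\geq 1$, form a stationary ergodic sequence of martingale differences under $\cP_{\mu_\e}$ with $\bbE_{\cP_{\mu_\e}}[|D_k|^2] < \infty$, because $D_k$ is a fixed measurable functional of $\theta_{k-1}$ applied to the path. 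By Birkhoff's ergodic theorem (or directly by the $L^2$ martingale SLLN, e.g.\ via the Burkholder–Davis–Gundy inequality together with stationarity of $\langle \hat M\rangle_{k}-\langle\hat M\rangle_{k-1}$), one gets $\hat M_n / n \to 0$ $\cP_{\mu_\e}$--a.s.\ along integers $n\to\infty$. Interpolating between consecutive integers is harmless: $\sup_{n\leq t\leq n+1}|\hat M_t - \hat M_n|/n \to 0$ a.s.\ by the Borel–Cantelli argument applied to $\bbE_{\cP_{\mu_\e}}[\sup_{n\leq t\leq n+1}|\hat M_t - \hat M_n|^2] \leq C$ combined with a standard maximal inequality for martingales — actually, since the increment over $[n,n+1]$ has bounded second moment, Doob's maximal inequality gives $\cP_{\mu_\e}(\sup_{n\leq t\leq n+1}|\hat M_t-\hat M_n| \geq \delta n) \leq C/(\delta n)^2$, summable in $n$. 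Hence $\hat M_t/t \to 0$ $\cP_{\mu_\e}$--a.s.

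Finally I would upgrade the statement from ``$\cP_{\mu_\e}$--a.s.'' to ``$\cP_\eta$--a.s.\ for $\mu_\e$--a.e.\ $\eta$'': writing $\cP_{\mu_\e} = \int \mu_\e(d\eta)\,\cP_\eta$ and noting that $\{\lim_{t\to\infty}\hat M_t/t = 0\}$ is a measurable event on $\Theta$, the fact that it has $\cP_{\mu_\e}$--probability one forces $\cP_\eta(\lim_{t\to\infty}\hat M_t/t=0)=1$ for $\mu_\e$--a.e.\ $\eta$, which is exactly the claim. The main obstacle, and the only place requiring genuine care, is the interpolation step: making sure that the fluctuation of $\hat M_t$ between integer times is negligible uniformly — but this is handled cleanly by Doob's inequality and Borel–Cantelli because the per-unit-time increments have uniformly bounded second moments, a consequence of the finite first moment assumption on the rates. (An alternative, heavier route for this step would be a block decomposition in the spirit of Lemma \ref{monnezza}, but it is not needed here since we already have $L^2$ control rather than merely $L^1$.)
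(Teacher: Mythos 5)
Your argument is essentially correct but follows a genuinely different route from the paper. The paper's proof works coordinatewise: it identifies the predictable quadratic variation explicitly, $\langle \hat M^{(i)}\rangle_t=\int_0^t v^{(\e)}(\eta_s)\,ds$ with $v^{(\e)}(\eta)=\sum_y y_i^2\, r_\e(y,\eta)$ (via \cite[Thm. 9.14]{Kl}), applies the ergodic theorem for additive functionals of the perturbed environment process (Corollary \ref{lacho_drom}) to get $\langle \hat M^{(i)}\rangle_t/t\to\mu_\e(v^{(\e)})<\infty$ $\cP_\eta$--a.s.\ for $\mu_\e$--a.e.\ $\eta$, and then invokes Lepingle's strong law for square--integrable martingales \cite{Le}. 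This has two advantages over your scheme: it works directly under $\cP_\eta$ (no disintegration step), and it only ever applies ergodicity to a functional of $(\eta_s)$ alone, so no question arises about ergodicity of the extended process carrying the Poisson clock and the uniforms. Your scheme (stationary unit--time increments, discrete martingale SLLN, Doob plus Borel--Cantelli for the interpolation) is also viable, provided you lean on the $L^2$ martingale SLLN rather than on Birkhoff, precisely to avoid having to justify ergodicity of the full coupled process $(\eta_t,V_t,N_t)$ under $Q$; the paper does not establish that, only ergodicity of $\mu_\e$ for the $\eta$--process. One genuine inaccuracy to fix: the jump sizes $h^{(\e)}(V_{t_k},\eta_{t_k^-})$ are \emph{not} bounded, since no finite--range assumption is made on the rates and \eqref{nthMoment} with $n=1$ only controls the first moment of the jump distribution. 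What you actually need is $\sup_t\cE_\eta\bigl[|\hat M_{t+1}-\hat M_t|^2\bigr]<\infty$, and this requires the hypothesis of Theorem \ref{late}--(iii) that \eqref{nthMoment} holds with $n=2$; the bound is then obtained exactly as in Lemma \ref{genova} (domination of $|X^{(\e)}_t|$ by a compound Poisson process whose jump law has the moments prescribed by \eqref{nthMoment}), which is also how the paper checks square integrability of $\hat M_t$ in \eqref{fullMart}. With that substitution your interpolation and disintegration steps go through.
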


Before proving the above claim, we conclude the proof of Theorem \ref{late}--(iii).
Recall that the trajectory $(\eta_t:= \t_{F_{\e}(t) }\s_t)_{t \geq 0}$ sampled according to $\cP_\eta$ has law $\bbP_\eta^{(\e)}$.
We know that $\bbP_\mu^{(\e)}$--a.s.\@ $t^{-1} \int_0^t d s \,j^{(\e)}(\eta_s)$ converges to $\mu_\e (j^{(\e)})$ (cf.\@ Theorem \ref{lacho_drom} and Theorem \ref{late}--(i)). 
Hence for $\mu_\e$--a.e.\@ $\eta$ we have that  {$t^{-1} \int_0^t d s \,j^{(\e)}(\eta_s)\to \mu_\e(j^{(\e)} )$} w.r.t $\cP_\eta$. This limit together with the above claim and with 
 \eqref{fullMart} allows to conclude that  $X^{(\e)}_t/t \to \mu_\e(j^{(\e)})$   $ P_{\eta,0}^{(\e)}$--a.s..

\begin{proof}[Proof of Claim \ref{claim_zero}] 
 We fix $i \in \{1,\dots, d\}$ and  let $\hat M_t^{(i)}$ be the $i$--th coordinate of $\hat M_t$.  We point out that
 $\hat M_t $ (and therefore  also
  $\hat M_t^{(i)}$) is a square integrable martingale.
 This follows   from \eqref{fullMart}:  since we have assumed that \eqref{nthMoment} holds with $n=2$, it is simple to check that $\cE_\eta( |X^{(\e)}_t|^2) <+\infty$ for any $t\geq 0$, while  $  j^{(\e)}(\cdot)$ is uniformly bounded.

Due to  \eqref{nthMoment}  with $n=2$,  the functions 
$m^{(\e)}(\eta):= \sum _y y_i r_\e (y, \eta) $ and $v^{(\e)}(\eta):= \sum _y y_i^2 r_\e (y, \eta)$ are uniformly bounded. Then, by {\cite[Theorem 9.14]{Kl} (note that $F_n(s)$ and $A(t)$ in \cite[Theorems 9.12, 9.13]{Kl} are given by $1-e^{-\l s}$ and  $\int_0^t j^{(\e)} (\eta_s)ds$, respectively),} we have that 
 \[  \langle \hat M^{(i)}\rangle_t= \int_0^t  v^{(\e)} ( \eta_s) ds\,.\]
 
%
%
%
%
%
%
%
%
   As the dynamic environment is an  ergodic process, $\cP_\eta$--a.s.\@ it holds $  \langle\hat M^{(i)}\rangle_t /t \to \mu_\e (v^{(\e)} )\in (0,+\infty) $ (use  \eqref{nthMoment}   with $n=2$). At this point, by applying the LLN for square integrable martingales (cf.\@ \cite[Thm.1]{Le}), we conclude that $\hat{M}_t^{(i) } /t \to 0$  $\cP_\eta$--a.s. for $\mu_\e$--a.e. $\eta$. 
\end{proof}


\section{Proof of Theorem \ref{porte-bonheur}, Theorem \ref{canicule} and Lemma \ref{commutare}  }\label{tao}

\subsection{Proof of Theorem \ref{porte-bonheur} }
 Define 
 $\cG_n$  as the $\s$--algebra on $\O$ generated by $(\eta_x\,:\, |x| _\infty\leq n)$. Then the smallest $\s$--algebra containing each  $\cG_n$  is the standard Borel $\s$--algebra $\cG$  on $\O$.

Let $ h \in L^1(\mu)$. By L\'evy's upward theorem (cf.\@ \cite{W}[page 134]), as $n \to \infty$ it holds
\begin{equation}\label{willy}
\mu( h | \cG_n)\to  \mu(h|\cG)= h \text{ in } L^1(\mu).
\end{equation}
We then claim that, for any  local function $f$, it holds
\begin{equation}\label{limoncello}
 \lim _{|x| \to \infty} \mu ( h \t_x f ) = \mu(h) \mu(f) \,.
\end{equation}
To prove our claim, we need to show that 
$\lim _{|x| \to \infty} \mu ( h \t_x [f -\mu(f) ] ) = 0$. Equivalently we need to show that 
 $\lim _{|x| \to \infty} \mu ( h \t_x f ) = 0$ if $\mu(f)=0$. 
In particular, we can restrict the proof to the case $f$ local  function with  zero mean. 
We can write
\begin{equation}\label{eq1}
\mu ( h \t_x f )= \mu ( [ h-\mu(h|\cG_n) ] \t_x f )+ \mu \left( \mu(h|\cG_n) \t_x f\right)
\,.
\end{equation} 
By \eqref{willy}, given $\e>0$ we can find $n$ large enough that  $ \mu (|h-\mu(h|\cG_n)|) \leq \e$. Fix such $n$. Note that since $\mu$ is translation invariant we have 
$\mu(\t_x f ) = \mu(f)=0$, which implies that  $ \mu \left( \mu(h|\cG_n) \t_x f\right)=  {\rm Cov} _\mu ( \mu(h|\cG_n) , \t_x f) $. 
 Then \eqref{eq1} gives
 \begin{equation}
\begin{split}
|\mu (h \t_x f) | & \leq \| f\|_\infty \mu (|h-\mu(h|\cG_n)|) + \left | {\rm Cov} _\mu ( \mu(h|\cG_n) , \t_x f) \right|\\
& \leq \|f\|_\infty \e+ \left | {\rm Cov} _\mu ( \mu(h|\cG_n) , \t_x f) \right|\,.
\end{split}
\end{equation}
At this point, using \eqref{zietto}, we conclude that 
\[ \limsup _{|x| \to \infty} |\mu (h \t_x f) |   \leq \|f\|_\infty \e\,.
\]
By the arbitrariness of $\e$ we get our claim.

Having the above claim, Theorem \ref{porte-bonheur} becomes immediate. Indeed, it is enough to \verde{take} $h:= d\mu_\e /d\mu$. Then \eqref{limoncello} becomes equivalent to \eqref{belgio}.

\subsection{Proof of Lemma \ref{commutare}}
Recall the coupling introduced in Section~\ref{san_valentino}. Since the rates $r(\cdot,\eta)$ do not depend on $\eta$, for $\eta\in\O$, under $\cP_\eta$, $X_\cdot$ and $\s_\cdot$ are independent. Therefore, fixed  a local function $f$, $t\in\bbR_+$, $\eta\in\O$, we have 
\begin{equation*}
\begin{split}
\bbE_\eta[f(\t_x\eta_t)]&= \sum_{y\in\bbZ^d}\cE_\eta\bigl[\mathbf{1}_{X_t=y}f(\tau_{x+y}\s_t)\bigr]
=\sum_{y\in\bbZ^d}\cP_\eta(X_t=y)\bbE_\eta^{\rm env}[f(\tau_{x+y}\s_t)] \\
&=\sum_{y\in\bbZ^d}\cP_\eta(X_t=y)\bbE_{\t_x\eta}^{\rm env}[f(\tau_{y}\s_t)] 
=\sum_{y\in\bbZ^d}\cE_{\t_x\eta}\bigl[\mathbf{1}_{X_t=y}f(\tau_{y}\s_t)\bigr]
 =\bbE_{\t_x\eta}[f(\eta_t)]\,.
\end{split}
\end{equation*}

\subsection{Proof of Theorem \ref{canicule}}
 Recall the definition of the functions in  \eqref{Sn} and  that $\bbE_\eta$ denotes the expectation for the   \emph{environment seen by the unperturbed walker} starting from $\eta$, that is, the Markov process with generator \eqref{EnvFromURW}.  To shorten the notation,  we set $f_x := f \circ \t_x$ for all  $x \in \bbZ^d$. 

The proof of Theorem \ref{canicule}  is based on the following technical result:
\begin{Lemma}\label{nasty}
If $
S(t)(g\circ \tau_x)=(S(t)g)\circ\tau_x $
holds for any $g$ local, $t\geq 0$, $x\in\bbZ^d$, then for all $n\geq 0$,  $f: \O \to \bbR$ local function and  $\eta\in\O$, 
it holds
\begin{equation}\label{pingu}
\begin{split}
\verde{\hat{L}_\e S_\e^{(n)}(t)f(\eta)}= &  \int_0^tdt_1\int_0^{t_1}dt_2\ldots\int_0^{t_{n-1}}dt_n\sum_{{z\in B(R)^{n+1},}}\sum_{\d\in\lbrace 0,1\rbrace^{n+1}}(-1)^{|\d|}\\
& \times \bbE_\eta\left[\left(\prod_{i=1}^{n+1}\hat{r}_\e\left(z_i,\tau_{(\d\cdot z)_{[i-1]}}\eta_{t-t_{i-1}}\right)\right)f_{(\d\cdot z)_{[n+1]}}\left(\eta_{t}\right)\right]\,,\end{split}
\end{equation}
where $|\d|=\sum_{i=1}^{n+1} (1-\d_i)$, $(\d\cdot z)_{[i]}=\d_1z_1+\ldots+\d_i z_i$, $(\d\cdot z)_{[0]}=0$ and $t_0=t$.
\end{Lemma}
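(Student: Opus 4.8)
The plan is to prove \eqref{pingu} by induction on $n$, unfolding the recursive definition \eqref{Sn} of $S_\e^{(n)}(t)$ and repeatedly using the commutation hypothesis $S(t)(g\circ\tau_x)=(S(t)g)\circ\tau_x$ together with the explicit form of the jump operator $\hat L_\e$. First I would record the base case $n=0$: since $S_\e^{(0)}(t)=S(t)$ and $\hat L_\e g(\eta)=\sum_{z\in B(R)}\hat r_\e(z,\eta)[g(\tau_z\eta)-g(\eta)]$, applying $\hat L_\e$ to $S(t)f$ gives, using the commutation relation to pull the translation $\tau_z$ through $S(t)$,
\begin{equation*}
\hat L_\e S(t)f(\eta)=\sum_{z\in B(R)}\hat r_\e(z,\eta)\bigl[(S(t)f_z)(\eta)-(S(t)f)(\eta)\bigr]=\sum_{z\in B(R)}\sum_{\d\in\{0,1\}}(-1)^{1-\d}\hat r_\e(z,\eta)\,\bbE_\eta[f_{\d z}(\eta_t)]\,,
\end{equation*}
which is exactly \eqref{pingu} with $n=0$ (here the empty product of inner integrals is $1$ and $t_0=t$). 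The point is that $\hat r_\e(z,\eta)=\hat r_\e(z,\tau_0\eta_{t-t_0})$ at $t_0=t$, matching the $i=1$ factor.

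Next, assuming \eqref{pingu} holds for $n$, I would treat $n+1$. Using $S_\e^{(n+1)}(t)=\int_0^t S(t-s)\hat L_\e S_\e^{(n)}(s)\,ds$, I get $\hat L_\e S_\e^{(n+1)}(t)f(\eta)=\int_0^t \hat L_\e S(t-s)\bigl(\hat L_\e S_\e^{(n)}(s)f\bigr)(\eta)\,ds$. Write $G_s:=\hat L_\e S_\e^{(n)}(s)f$. Then, exactly as in the base case, $\hat L_\e S(t-s)G_s(\eta)=\sum_{z_1\in B(R)}\sum_{\d_1\in\{0,1\}}(-1)^{1-\d_1}\hat r_\e(z_1,\eta)\,\bbE_\eta^{\rm ew}[(\tau_{\d_1 z_1}G_s)(\eta_{t-s})]$, where I use the commutation relation to move $\tau_{\d_1 z_1}$ inside $S(t-s)$ and express $S(t-s)$ via $\bbE_\eta$ for the environment seen by the unperturbed walker. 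Now I substitute the inductive expression for $G_s=\hat L_\e S_\e^{(n)}(s)f$ evaluated at the point $\tau_{\d_1 z_1}\eta_{t-s}$, rename the time variable $s$ as $t_1$ (so $s$ ranges in $[0,t]$, and the inner times become $0<t_{n+1}<\dots<t_2<t_1$), reindex the sums over $z_2,\dots,z_{n+2}$ and $\d_2,\dots,\d_{n+2}$, and apply the Markov property to glue $\bbE_\eta$ (for $[0,t-t_1]$) with $\bbE_{\eta_{t-t_1}}$ (for the remaining time) into a single expectation $\bbE_\eta$ over $[0,t]$. Tracking the translations carefully, the cumulative shift entering the $i$-th rate factor becomes $\d_1 z_1+\dots+\d_{i-1}z_{i-1}=(\d\cdot z)_{[i-1]}$ and the shift hitting $f$ at the final time becomes $(\d\cdot z)_{[n+2]}$, while the sign collects to $(-1)^{\sum(1-\d_i)}=(-1)^{|\d|}$; this is precisely \eqref{pingu} with $n$ replaced by $n+1$.

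The main obstacle I anticipate is bookkeeping rather than conceptual: keeping straight how the translation variables compose when one pushes $\tau_{\d_1 z_1}$ through $S(t-s)$ and into the already-translated environment process supplied by the inductive hypothesis, and verifying that the Markov/semigroup splitting of $\bbE_\eta$ produces exactly the nested time-integral $\int_0^t dt_1\int_0^{t_1}dt_2\cdots$ with $t_0=t$. A secondary technical point is justifying that all the interchanges of $\hat L_\e$ (a bounded operator on $L^2(\mu)$, by \eqref{epifania}) with the integrals and with the expectations are legitimate, and that the finite-range assumption (iii) in Theorem~\ref{canicule} restricts every $z_i$ to $B(R)$ so the sums are finite; both follow from Assumption~\ref{ass:rates} and boundedness of $\hat L_\e$, so no delicate estimate is needed. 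Finally I would note that the hypothesis of the lemma — namely $S(t)(g\circ\tau_x)=(S(t)g)\circ\tau_x$ for local $g$ — is supplied in the setting of Theorem~\ref{canicule} by assumption (i) there (and, when $r$ is environment-independent, by Lemma~\ref{commutare}), so the lemma applies in the situation where it is invoked.
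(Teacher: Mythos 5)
Your proposal is correct and follows essentially the same route as the paper: induction on $n$, unfolding the Dyson--Phillips recursion \eqref{Sn}, applying $\hat L_\e$ to produce the translated/untranslated difference (encoded by $\d_1\in\{0,1\}$ with sign $(-1)^{1-\d_1}$), then using the translation-covariance assumption together with the Markov property to glue the nested expectations into a single $\bbE_\eta$ with the cumulative shifts $(\d\cdot z)_{[i-1]}$. The only presentational difference is that the paper substitutes the inductive expression for $\hat L_\e g_n(t_1)$ at the translated point \emph{before} invoking the commutation relation, so that translation covariance is only ever applied to genuinely local functions (products of the $\hat r_\e$'s and $f$), which is exactly the bookkeeping point you flag.
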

Formula \eqref{pingu} has to be thought with  no time integration in the  degenerate case $n=0$.

\begin{proof} \verde{For simplicity of notation, as in \eqref{gigino}, we set $g_{n+1}(t):= S_\e^{(n)}(t)f(\eta), n\geq 0$.} 
The proof is done by induction on $n$ and relies on the following two identities (based on the definition of $\hat{L}_\e$, $g_{n+1}(t)$ and Assumptions (i)\footnote{Note that Assumption (i) can be restated as   $\bbE_\eta \bigl(f (\t_x\eta_t) \bigr) = \bbE_{\t_x \eta} \bigl( f( \eta_t) \bigr) $ for any local function $f$ and $x \in \bbZ^d$}  and (iii) in Theorem \ref{canicule}):
\begin{align}
& \hat{L}_\e g_1(t )(\eta)= \sum_{z\in B(R)}\hat{r}_\e(z,\eta)\bbE_\eta\left[f(\tau_z\eta_t)-f(\eta_t)\right] \,,\label{game1}\\
& \hat{L}_\e g_{n+1}(t )(\eta)=\sum_{z_0\in B(R)}\hat{r}_\e(z_0,\eta)\int_0^t dt_1\,\bbE_\eta\left[\hat{L}_\e g_{n}(t_1 )\left(\tau_{z_0}\eta_{t-t_1}\right)-\hat{L}_\e g_{n}(t_1)\left(\eta_{t-t_1}\right)\right] \,. \label{game2}
\end{align}
Trivially \eqref{game1} corresponds to \eqref{pingu} for $n=0$.
Let us now assume \eqref{pingu} holds for $n-1$, where $n \geq 1$, and deduce that it holds for $n$.  
By the induction hypothesis
 we get
 \begin{eqnarray}
\bbE_\eta\left[\hat{L}_\e g_{n}(t_1  )\left(\tau_{z_0}\eta_{t-t_1}\right)\right]=\int_0^{t_1 }dt_2\int_0^{t_2}dt_3\ldots\int_0^{t_{n-1}}dt_n\sum_{{z\in B( R)^n}}\sum_{\d\in\lbrace 0,1\rbrace^{n}}(-1)^{|\d|}
\nonumber\\
\quad\displaystyle\times\,\bbE_\eta\Big[\bbE_{\tau_{z_{0}} \eta_{t-t_1}}\Big[\Big(\prod_{i=1}^{n}\hat{r}_\e\bigl(z_i,\tau_{(\d\cdot z)_{[i-1]}}\eta_{t_1-t_{i}}\bigr)\Big)f_{ (\d\cdot z)_{[n]}}\bigl(\eta_{t_1}\bigr)\Big]\Big].\label{alexey}
\end{eqnarray}
By  Assumption (i) in Theorem \ref{canicule} and the Markov property applied at time $t-t_1$, we can rewrite the expectation in the r.h.s. of \eqref{alexey} as 
 \begin{equation*}
 \begin{split}
&\bbE_\eta\Big[ \bbE_{ \eta_{t-t_1}}\Big[ \Big(\prod_{i=1}^{n}\hat{r}_\e\bigl(z_i,\tau_{z_0+(\d\cdot z)_{[i-1]}}\eta_{t_1-t_{i}}\bigr)\Big)f_{ z_0+(\d\cdot z)_{[n]}}\bigl(\eta_{t_1}\bigr) \Big]\Big]\\
& =\bbE_\eta\Big[  \Big(\prod_{i=1}^{n}\hat{r}_\e\bigl(z_i,\tau_{z_0+(\d\cdot z)_{[i-1]}}\eta_{t -t_{i}}\bigr)\Big)f_{ z_0+(\d\cdot z)_{[n]}}\bigl(\eta_{t}\bigr) \Big]\Big]
 \end{split}
 \end{equation*}
 If we set $(z_1', z_2'\dots, z'_{n+1}):= (z_0,z_1, \dots, z_n)$ and  $(\d_1', \d_2', \dots, \d_{n+1}'):= (1, \d_1, \d_2 ,\dots, \d_n)$, recalling the convention $t_0:=t$ we can write 
 %
  \begin{equation*}
  \begin{split}
  &  \hat{r}_\e(z_0,\eta)\Big(\prod_{i=1}^{n}\hat{r}_\e\bigl(z_i,\tau_{z_0+(\d\cdot z)_{[i-1]}}\eta_{t-t_{i}}\bigr)\Big) f_{ z_0+(\d\cdot z)_{[n]}}\bigl(\eta_{t}\bigr)    \\ =
 &\prod_{i=1}^{n+1}\hat{r}_\e\left(z_i',\tau_{(\d'\cdot z')_{[i-1]}}\eta_{t-t_{i-1}}\right)f_{(\d'\cdot z')_{[n+1]}}\bigl(\eta_{t}\bigr)  \,.
 \end{split}
  \end{equation*}
Coming back to \eqref{alexey} the above observations imply that 
\[
\sum_{z_0\in B(R)}\hat{r}_\e(z_0,\eta)\int_0^t dt_1\,\bbE_\eta\left[\hat{L}_\e g_{n}(t_1 )\left(\tau_{z_0}\eta_{t-t_1}\right)\right]
\]
is given by the r.h.s. of \eqref{pingu} where the sum among $\d$ is restricted to $\d\in B(R)^{n+1}$ with $\d_1=1$. By the same arguments, we get that \[
\sum_{z_0\in B(R)}\hat{r}_\e(z_0,\eta)\int_0^t dt_1\,\bbE_\eta\left[\hat{L}_\e g_{n}(t_1 )\left(\eta_{t-t_1}\right)\right]
\]
is given by the r.h.s. of \eqref{pingu} where the sum among $\d$ is restricted to $\d\in B(R)^{n+1}$ with $\d_1=0$.
To get the thesis is now enough to invoke \eqref{game2}.
\end{proof}

We can now get our estimates for the convergence. To simplify the notation, in what follows we write $|\cdot |$ for the uniform norm $|\cdot |_\infty$.  

\begin{claim}\label{ventilo}
It is enough to show that for all $n,t,x,$
\begin{equation}\label{tropchaudx}\verde{\Bigl|\mu\Bigl(\hat{L}_\e S_\e^{(n)}(t)f_x\Big)\Bigr|}\leq C_0\frac{(Ct)^n}{n!}e^{-\theta_2|x|+\theta_3 t}
\end{equation}
for some $\theta_2>0$, $\theta_3,C,C_0\in\bbR_+$ not depending on $n,t,x$ (possibly depending on $\epsilon, \g$).
\end{claim}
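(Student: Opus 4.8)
The plan is to feed the hypothesized bound \eqref{tropchaudx} into the series representation of $\mu_\e$ and balance it against the a priori time decay already supplied by Proposition~\ref{prop_pert}. First, apply the expansion of Theorem~\ref{late}--(i), i.e.\@ \eqref{muinfinity_bisXXX}, to the local function $f_x:=f\circ\tau_x$; since $\mu$ is translation invariant we have $\mu(f_x)=\mu(f)$, so
\begin{equation*}
\mu_\e(\tau_x f)-\mu(f)=\sum_{n=0}^\infty\int_0^\infty \mu\bigl(\hat L_\e S_\e^{(n)}(s)f_x\bigr)\,ds\,,
\end{equation*}
the series of integrals being absolutely convergent by \eqref{caffettino}. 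Next, fix a parameter $c>0$ (to be optimized at the end), set the threshold $t^\star:=c\,|x|$ with $|x|=|x|_\infty$, and split each $s$--integral at $t^\star$.

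On the tail $s\ge t^\star$ use the a priori estimate \eqref{caffettino} with $g_{n+1}(s)=S_\e^{(n)}(s)f_x$; since $\|f_x-\mu(f_x)\|=\|f-\mu(f)\|$ by translation invariance, it gives $\bigl|\mu(\hat L_\e S_\e^{(n)}(s)f_x)\bigr|\le \e\,e^{-\g s}\,(\e s)^n/n!\;\|f-\mu(f)\|$, whence, summing over $n\ge 0$ and integrating (recall $\e<\g$ from Assumption~\ref{asso3}),
\begin{equation*}
\sum_{n=0}^\infty\int_{t^\star}^\infty\bigl|\mu(\hat L_\e S_\e^{(n)}(s)f_x)\bigr|\,ds\le\frac{\e}{\g-\e}\,e^{-(\g-\e)\,c\,|x|}\,\|f-\mu(f)\|\,.
\end{equation*}
On the bulk $0\le s\le t^\star$ use instead the hypothesis \eqref{tropchaudx}:
\begin{equation*}
\sum_{n=0}^\infty\int_0^{t^\star}C_0\,\frac{(Cs)^n}{n!}\,e^{-\theta_2|x|+\theta_3 s}\,ds=C_0\,e^{-\theta_2|x|}\int_0^{t^\star}e^{(C+\theta_3)s}\,ds\le\frac{C_0}{C+\theta_3}\,e^{(-\theta_2+(C+\theta_3)c)\,|x|}
\end{equation*}
(in the borderline case $C+\theta_3=0$ the inner integral equals $t^\star=c|x|$ and the bound $C_0\,c\,|x|\,e^{-\theta_2|x|}$ still decays exponentially).

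Finally, choose $c$ small, say $0<c<\theta_2/(C+\theta_3)$, so that the exponent $-\theta_2+(C+\theta_3)c$ is strictly negative; with this choice both contributions decay exponentially in $|x|_\infty$, and setting $\theta':=\min\{(\g-\e)c,\ \theta_2-(C+\theta_3)c\}>0$ and absorbing the remaining prefactors — which depend only on $\|f-\mu(f)\|$, $\e$ and $\g$ — into a single constant $C(f,\e,\g)$, one recovers exactly \eqref{tropchaudmu}, proving Claim~\ref{ventilo}. The only slightly delicate point in this reduction is the simultaneous tuning of the two exponents, which forces $c<\theta_2/(C+\theta_3)$; the genuine work lies in establishing the input bound \eqref{tropchaudx} itself, which will be derived below from the closed formula of Lemma~\ref{nasty} together with the finite speed of propagation (Assumption~(ii) of Theorem~\ref{canicule}) and the finite range of $r,\hat r_\e$ (Assumption~(iii)).
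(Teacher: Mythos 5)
Your argument is correct and coincides with the paper's own proof: both identify the series $\sum_n\int_0^\infty\mu(\hat L_\e S_\e^{(n)}(s)f_x)\,ds$ with $\mu_\e(\tau_x f)-\mu(f)$ via \eqref{muinfinity_bisXXX}, then split the time integral at a threshold proportional to $|x|$, using the hypothesized bound \eqref{tropchaudx} on the bulk and the a priori bound \eqref{caffettino} on the tail, with the same constraint $c<\theta_2/(C+\theta_3)$ on the splitting parameter. The only difference is trivial bookkeeping (you sum over $n$ before integrating on the bulk, the paper integrates term by term), so nothing further is needed.
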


\begin{proof}[Proof of Claim~\ref{ventilo}]
We need to show that 
\begin{equation}\label{crevaison}
\Bigl|\sum_{n\geq 0}\int_0^\infty dt\,\mu\Bigl(\verde{\hat{L}_\e S_\e^{(n)}(t)f_x}\Big)\Bigr|\leq C(f)e^{-\theta'|x|},
\end{equation}
since the l.h.s. equals $\mu_\e (f)-\mu(f)$ {by \eqref{muinfinity_bisXXX}}. Notice that, in addition to \eqref{tropchaudx}, by \eqref{caffettino} we can bound  
\begin{equation}\label{tropchaudnt}
\Big|\mu\Big(\verde{\hat{L}_\e S_\e^{(n)}(t)f_x}\Big)\Big|\leq \e e^{-\g t}\frac{(\e t)^{n}}{n!}\|f\|\,.
\end{equation}


We can estimate
\begin{eqnarray}
\Bigl|\sum_{n\geq 0}\int_0^\infty dt\,\mu\Bigl(\hat{L}_\e S_\e^{(n)}(t)f_x\Big)\Bigr|\leq  C_1 \sum_{n\geq 0}\int_0^\infty dt\,\Bigl(\frac{(Ct)^n}{n!}e^{-\theta_2|x|+\theta_3 t}\Bigr)\wedge\Bigl(e^{-\g t}\frac{(\e t)^{n}}{n!}\Bigr),
\end{eqnarray}
where $C_1=C_0\vee (\e\|f\|)$. Let $\a\in(0,\theta_2/(C+\theta_3))$. The sum in the right-hand side above can be estimated by
\begin{equation*}
\begin{split}
\sum_{ {n\geq 0} }\int_0^{\a|x|}dt \,\frac{(Ct)^n}{n!}e^{-\theta_2|x|+\theta_3t}+\sum_{ {n\geq 0} }\int_{\a|x|}^\infty dt \,e^{-\g t}\frac{(\e t)^{n}}{n!}\\
{= \sum_{{n\geq 0 }}\frac{C^n(\a|x|)^{n+1}}{(n+1)!}e^{-\theta_2|x|+\theta_3\a|x|}+ \int_{\a|x|}^\infty dt\, e^{-(\g-\e) t}}\\
\leq  {\frac{1}{C}} e^{-(\theta_2-\a C-\a\theta_3)|x|}+{ \frac{1}{\g-\e} e^{-(\g-\e)\a|x|} }
\,.
\end{split}
\end{equation*}
Therefore we can indeed choose $\theta'>0 $ as in \eqref{crevaison}.
\end{proof}

We now move to prove \eqref{tropchaudx}. 
{We claim that}, in view of Lemma~\ref{nasty}, it is enough to show that there exist $\theta_2>0, C,\theta_3\in\bbR_+$ such that $\forall n\geq 1$, $\forall t=t_0>t_1 > \cdots >   t_n$, $\forall z\in B(R)^n$, $\forall z_{n+1}\in B(R)$, {$\forall \d'\in\lbrace 0,1\rbrace^n$}, we have:
\begin{eqnarray}\label{gradpingu}
\begin{split}
\bbE_\mu\Big[\Big(\prod_{i=1}^{n+1}r_\e\bigl(z_i,\tau_{(\d'\cdot z)_{[i-1]}}\eta_{t-t_{i-1}}\bigr)\Big)\Bigl(f_{x+({\d'}\cdot z)_{[n]}+z_{n+1}}\bigl(\eta_{t}\bigr)-f_{x+({\d'}\cdot z)_{[n]}}\bigl(\eta_{t}\bigr)\Bigr)\Big]\\
\leq C^ne^{-\theta_2|x|+\theta_3 t}\,.
\end{split}
\end{eqnarray}

To see why this is enough, consider the sum indexed by the $(n+1)$-uples $\delta\in\lbrace 0,1\rbrace^{n+1}$ appearing in \eqref{pingu} and reindex it by gathering together the two terms sharing the same first $n$ coordinates. {To use \eqref{gradpingu}, we set $\d'$ to be the corresponding $n$-coordinate vector.} The integrals and sums contribute at most a factor $\frac{t^n}{n!}(2(2R+1)^d)^{n+1}$ and therefore we get \eqref{tropchaudx} from \eqref{gradpingu} {(changing the value of $C$)}.

In order to prove \eqref{gradpingu}, {we abbreviate} 
\[ \Pi =\prod_{i=1}^{n+1}r_\e\bigl(z_i,\tau_{(\d\cdot z)_{[i-1]}}\eta_{t-t_{i-1}}\bigr),\qquad \Delta f_x=f_{x+(\d\cdot z)_{[n]}+z_{n+1}}\bigl(\eta_{t}\bigr)-f_{x+(\d\cdot z)_{[n]}}\bigl(\eta_{t}\bigr)\,.\]
To conclude we need to show that 
$\bbE_\mu\left[\Pi\Delta f_x\right]\leq C^ne^{-\theta_2|x|+\theta_3 t}$. 

Let $\beta(x)=|x|/5$. Given $\s \in \{0,1 \}^{\bbZ^d}$ we write $\tilde \s $ for the configuration obtained  from $\s$ by periodizing $\s $  restricted to  the box $B( 2\beta(x))$ (for simplicity of notation we assume $\beta(x)$ to be integer). Recall that in Section \ref{san_valentino} we have built the random walk $X_\cdot$ as a function $F \left( \s_\cdot , \cT, \cU\right)$ of the environment trajectory $\s_\cdot$, Poisson times $\cT$ (with parameter $\l$) and uniform random variables $\cU$ (these last objects defined on a probability space with probability measure $\cP_\mu$).  Let $\verde{(\tilde X_s)_{s\geq 0}} := 
 F \left( (\tilde{\s}_s)_{s \in \bbR_+}, \cT, \cU\right)$ and let $N$ be the cardinality of 
 $ \cT \cap [0,t] $. By the definition of $F$ and since the jump rates have finite range $R$ and support of size $R$,  we have  for any $u \leq t$ that $|X_u 
| \leq N R$ and $X_u=  F \left( \s_\cdot , \cT, \cU\right) $ depends on $\s_\cdot$ only through  $\left({\s_s }_{| B((N+1) R)}\right)_{ s \in [0,t]}$.   Note that  $N $ is a Poisson variable with parameter $\l t$ and in particular $P( N > k) \leq e^{- k+(e-1)\l t }$. Hence, taking $k= \b(x)/R-1$, for $x$ large enough  it holds $\cP_\mu(\cG)\geq 1-ee^{- \b(x)/R+(e-1)\l t }$
where 
 $ \cG:= \{ X_s = \widetilde{X}_s \in B( \beta(x)) \; \forall s \leq t \}$.

Let us set 
  \begin{align*}
\widetilde{\Pi}&=\prod_{i=1}^{n+1}r_\e\bigl(z_i,\tau_{(\d\cdot z)_{[i-1]}+\widetilde{X}_{t-t_{i-1}}}   \sigma_{t-t_{i-1}}\bigr),\\
\widetilde{\Delta f_x}& =f_{x+(\d\cdot z)_{[n]}+z_{n+1}+\widetilde{X}_t}\bigl(  \sigma_{t}\bigr)-f_{x+(\d\cdot z)_{[n]}+\widetilde{X}_t}\bigl(    \sigma_{t}\bigr)
\end{align*}
and let us introduce  the  event $\cB:= \{ \tilde{X}_{t-t_1},...,\tilde{X}_{t-t_n},\tilde{X}_t \in B(\beta(x) ) \}$.
 Since $ \cG \subset \cB$ it holds $\cP_\mu(\cB)\geq 1-ee^{- \b(x)/R+(e-1)\l t }$.

Since $\Pi$, $ \D f_x$, $\widetilde{\Pi}$, $\widetilde{\Delta f_x}$   are bounded uniformly in $x$ (respectively by $R^{n+1}$, $2\|f\|_\infty$, $R^{n+1}$, $2\|f\|_\infty$),
writing $\mathbf{\widetilde{X}}:=(\tilde{X}_{t-t_1},...,\tilde{X}_{t-t_n},\tilde{X}_t)$ and $y_0=0$, 
 we can estimate
\begin{equation*}
\begin{split}
|\bbE_\mu[\Pi\Delta f_x]| &\leq |\cE_\mu[\Pi\Delta f_x \mathds{1}_{\cG} ]| +C(f){R^{n+1}}e^{- \b(x)/R+(e-1)\l t }\\
&{=} |\cE_\mu[\widetilde{\Pi}\widetilde{\Delta f_x}  \mathds{1}_{\cG} ]| +C(f){R^{n+1}}e^{- \b(x)/R+(e-1)\l t }\\
&\leq  |\cE_\mu[\widetilde{\Pi}\widetilde{\Delta f_x}   \mathds{1}_{\cB} ]| +2C(f){R^{n+1}}e^{- \b(x)/R+(e-1)\l t }\\
&\leq\Bigl|\sum_{\mathbf{y}\in B(\beta(x))^{n+1}}\mathcal{E}_\mu[\overline{\Pi}\,\overline{\Delta f_x}\mathbf{1}_{\mathbf{\widetilde{X}}=\mathbf{y}}]\Bigr|+2C(f){R^{n+1}}e^{- \b(x)/R+(e-1)\l t } \\
&\leq\Bigl|\sum_{\mathbf{y}\in B(\beta(x))^{n+1}}\mathbb{E}_\mu^{\mathrm{env}}[\overline{\Pi}\,\overline{\Delta f_x}\mathcal{P}_\mu({\mathbf{\widetilde{X}}=\mathbf{y}}|(\sigma_s)_{s\leq t})]\Bigr|+2C(f){R^{n+1}}e^{- \b(x)/R+(e-1)\l t }\\
&\leq \Bigl|\sum_{y_{n+1}\in B(\beta(x))}\mathbb{E}_\mu^{\mathrm{env}}\Bigl[\overline{\Delta f_x}\sum_{\mathbf{y}\in B(\b(x))^n}\overline{\Pi}\,\mathcal{P}_\mu({\mathbf{\widetilde{X}}=(\mathbf{y},y_{n+1})}|(\sigma_s)_{s\leq t})\Bigr]\Bigr|\\
&\quad+\,2C(f){R^{n+1}}e^{- \b(x)/R+(e-1)\l t }\,,
\end{split}
\end{equation*}
where
\begin{align*}
 \overline{\Pi}& :=\prod_{i=1}^{n+1}r_\e\bigl(z_i,\tau_{(\d\cdot z)_{[i-1]}+y_{i-1}}\sigma_{t-t_{i-1}}\bigr)\,,\\
 \overline{\Delta f_x}&:=f_{x+(\d\cdot z)_{[n]}+z_{n+1}+y_{n+1}}\bigl( \sigma_{t}\bigr)-f_{x+(\d\cdot z)_{[n]}+y_{n+1}}\bigl(  \sigma_{t}\bigr)\,,
\end{align*}
and $C(f)=2e\|f\|_\infty$.

 By definition, $\widetilde{X}$  depends only on $(\sigma_s)_{s\leq t}$ restricted to $B( 2 \beta(x))$ (and therefore the same holds for $\mathcal{P}_\mu({\mathbf{\widetilde{X}}=\mathbf{y}}|(\sigma_s)_{s\leq t})$), $\overline{\Pi}$ depends only the process restricted to $B((n+1)R+\beta(x))$ and $\overline{\Delta f_x}$ on $B(x,(n+1)R+L+\beta(x)) \subset B(x,(n+1)R+2\beta(x))$ (for $x$ large, where $B(x,r)= x+B(r)$). { We note that 
$B(x,(n+1)R+2\beta(x))$  and $B((n+1)R+2\beta(x))$ have uniform distance $|x|- 4 \beta (x)-2R(n+1)$,}
   so that by finite speed propagation of the environment process, if $|x|- 4 \beta (x)-2R(n+1)\geq \k t$ (where $\k $ is the constant appearing in the definition of finite speed propagation),
\begin{eqnarray}
\begin{split}
& \bbE_\mu[\Pi\Delta f_x]\leq \Bigl|\sum_{y_{n+1}\in B(\beta(x))}\mathbb{E}_\mu^{\mathrm{env}}[\overline{\Delta f_x}]\mathbb{E}_\mu^{\mathrm{env}}\Bigl[\sum_{\mathbf{y}\in B(\b(x))^n}\overline{\Pi}\,\mathcal{P}_\mu({\mathbf{\widetilde{X}}=(\mathbf{y},y_{n+1})}|(\sigma_s)_{s\leq t})\Bigr]\Bigr|\\
&+\,2C(f){R^{n+1}}e^{- \b(x)/R+(e-1)\l t }+{(2\beta(x)+1)}  R^{n+1}C(f)e^{-\theta(|x|- 4 \beta (x)-2R(n+1))}\\
&= 2C(f){R^{n+1}}e^{- \b(x)/R+(e-1)\l t }+{(2\beta(x)+1)} R^{n+1}C(f)e^{-\theta(|x|- 4 \beta (x)-2R(n+1))}\\
&\leq 2C(f){R^{n+1}}e^{- |x|/5R+(e-1)\l t }+{ c R}e^{2R\theta}C(f)(Re^{2R\theta})^ne^{-\theta|x|/6}\,,\label{greece}
\end{split}
\end{eqnarray}
where the equality follows from $\bbE_\mu^{\mathrm{env}}[\overline{\Delta f_x}]=0$  (since $\mu$ is translation invariant),  we have estimated $\|\sum_{\mathbf{y}\in B(\b(x))^n}\overline{\Pi}\,\mathcal{P}_\mu({\mathbf{\widetilde{X}}=(\mathbf{y},y_{n+1})}|(\sigma_s)_{s\leq t})\|_\infty$ by $R^{n}$ {and the constant $c$ is such that  $( 2u/5+1)e^{- \theta u/5}  \leq e^{ - \theta u/6}$.}

On the other hand, if $|x|- 4 \beta (x)-2R(n+1)\leq \k t$, we can estimate
\begin{equation}
\begin{split}
\bbE_\mu[\Pi\Delta f_x]&\leq 2\|f\|_\infty R^{n+1}=  2\|f\|_\infty R^{2(n+1)}R^{-(n+1)} \\
& \leq 2\|f\|_\infty R^{2(n+1)}R^{-\frac{|x|-4\b(x)}{2R}+\k t} \leq  2\|f\|_\infty R^2 (R^2)^ne^{-|x|\frac{\ln R}{10R}+t \k\ln R}\,.\label{bailout}
\end{split}
\end{equation}
In both cases \eqref{greece} and \eqref{bailout}, we find an estimate of the form \eqref{gradpingu}, which concludes the proof.

\section{{Proof of Theorem \ref{QCLT}--(i)} }\label{RWproof_pera}

Recall the notation of  Sections  \ref{san_valentino} and \ref{lavatrice}. 
We introduce the probability measure $ \cP_{\mu_\e}:= \int \mu_\e (d \eta) \cP_{ \eta}$ on the space $\Theta$. We write $Q$ for the image of $\cP_{\mu_\e}$ induced by  
 the map $\Theta \ni ( \s_t, V_t, N_t) _{t \geq 0} \mapsto     (\t_{X_t^{(\e)} } \s_t ,V_t, N_t)_{t \geq 0 } \in \Theta$.  Note that the projection of $Q$ along the first coordinate  is simply $\bbP^{(\e)}_{\mu_\e}$. To stress this property, we write $(\eta_t, V_t,N_t) _{{t \geq 0}}$ for a generic element of probability space $(\Theta,Q)$, since usually we set $\eta_t:=  \t_{X_t^{(\e)} } \s_t$.
 
 Given $t \geq 0$, we define $\cH_t= \s (   \eta_s , V_s, N_s \,:\, s \in [0,t])$  as $\s$--algebra on $\Theta$. 
Then we write $\bar{\cH}_t$ for the augmented filtration w.r.t.\@ $Q$ following \cite[Def.7.2, Sec.2.7]{KS}. Since  $( \eta_s , V_s, N_s )_{t \geq 0}$ is a strong Markov process,  by  \cite[Prop.7.7, App.A]{KS}  the filtration  $( \bar{\cH}_t)_{t\geq 0}$ on $(\Theta, Q)$ satisfies the usual conditions.
  
 By   the martingale representation in \eqref{fullMart} and since $v(\e)= \mu_\e ( j^{(\e)})$ (cf. Theorem \ref{late}--(iii)),  the position of the walker  centered with its asymptotic velocity  can be written as 
\begin{equation}\label{centrato}
X^{(\e)}_t-v(\e)t=\hat{M}_t+\int_0^t d s \,[j^{(\e)}(\eta_s)-\mu_{\e}(j^{(\e)})]=:\hat{M}_t+A_t(f)\,,
\end{equation}
where $(\hat{M}_t ) _{t \geq 0}$ is a martingale w.r.t.\@ $( \Theta, ( \bar{\cH}_t)_{t\geq 0} , Q )$ and 
 $A_t(f)$ is the additive functional introduced in  Section \ref{limiti_perturbato}  associated with the function $f(\eta):=j^{(\e)}(\eta)-\mu_\e(j^{(\e)})$.
Note that,  by Assumption~\ref{ass:rates}, 
 the vector--valued function $f$ is a bounded continuous function on $\O$ with $\mu_\e(f)=0$. In particular, following the proof of Proposition~\ref{gelem_gelem},   for each coordinate $i=1,\ldots,d$, we can find  $g^{i}\in\mathcal{D}(\mathcal{L}_\e)$ such that 
 $ f^i= - \cL_\e g^{i}$, thus leading to the decomposition 
\begin{equation}
\int_0^t  f^i (\eta_s) ds =\tilde{M}^{(i)}_t+R^{i}_t,
\end{equation}
where $\tilde{M}^{(i)}_t=g^{i}(\eta_t)-g^i (\eta_0)-\int_0^t d s\, \mathcal{L}_\e g^{i}(\eta_s)$ is a martingale and $R^{i}_t=-   g^{i}(\eta_t)+ g^i (\eta_0)$ satisfies the conclusion of Lemma~\ref{monnezza}. As a consequence, we have that
\begin{equation}
X^{(\e)}_t-v(\e)t=\sum_{i=1}^d M_t^{(i)}e_i+R_t, \qquad M_t := \hat{M}_t+ \tilde{M}_t \,.
\end{equation}
Due to Lemma \ref{monnezza}, to get the thesis we only need to 
  apply \cite[Thm. 2.29, Chp. 2]{KLO}, which will show the invariance principle for the martingale term. Due to  the definition of $\hat M_t, \tilde M_t$ (see also \eqref{sonno} and \eqref{fullMart}), the  martingale clearly has stationary increments on $(\Theta, Q)$ and is square integrable (see the discussion after \eqref{fullMart}). It remains to show that for any $i,j=1,\ldots,d$, $\langle M^{(i)}, M^{(j)}\rangle_{n}/n$ converge a.s.\@ and in $L^1$ to $D_\e(i,j)=E [ M_1^{(i)}M_1^{(j)}]$, denoting by $E$ the expectation w.r.t.\@ $Q$. By the parallelogram identity, it is enough to show that $\langle M^{(i)}\pm M^{(j)}\rangle_{n}/n$ converge a.s.\@ and in $L^1$ to $\bbE^{(\e)}_{\mu_\e} [( M_1^{(i)}\pm M_1^{(j)})^2]$.   
 
   To this aim we set $\cM_t:= M^{(i)}_t\pm M^{(j)}_t$ and observe that  
   \begin{equation}\label{pazienza} 
   \cM_{t+s}=\cM_t+ \cM_s \circ \theta _s \qquad \forall t,s \geq 0\,,
   \end{equation} where $\theta_s$ is the time-translation on $\Theta$ at time $s$. Due to the martingale property, we have 
   \begin{equation}
   E\bigl[\cM_{t+s}^2- \cM_t^2 | \bar{\cH}_t \bigr]=  E\bigl[ (\cM_{t+s}- \cM_t)^2 | \bar{\cH}_t \bigr]= E\bigl[ \cM_{s}^2 | \bar{\cH}_0 \bigr]\circ \theta_t\,.
   \end{equation} 
    By the definition of $\cM_t$ it follows easily that $E\bigl[ \cM_{s}^2 | \bar{\cH}_0 \bigr]$ is $\s(\eta_0)$--measurable. To simplify the notation, we write $F_s( \eta_0)$ for  $E\bigl[ \cM_{s}^2 | \bar{\cH}_0 \bigr]$, thus allowing to write $E\bigl[\cM_{t+s}^2- M_t^2 | \bar{\cH}_t \bigr]=F_s(\eta_t)$. On the other hand, by     \cite[Thm. (31.2), Chp. VI.6.31]{RW2}, $\langle \cM \rangle_t$ is the limit of $ \Sigma^t_S$ in the weak ($L^\infty)$ topology of $L^1( \Theta,Q)$ as the mesh of the partition $S$ goes to zero, where $\Sigma^t_S:= \sum_{i=1}^n  E[ \cM^2_{t_i}-   \cM^2_{t_i-1}| \bar{\cH}_{t_{i-1} }] $ for a partition   $ S= \{ t_0 =0< t_1< \cdots < t_n=t\}$.\footnote{\cite[Thm. (31.2), Chp. VI.6.31]{RW2} is stated for  the compensator of submartingale of class (D), on the other hand  for any fixed $T>0$ the process $\cM^2_{t \wedge T}$ is uniformly integrable and therefore it is a  submartingales of class (D) (combine \cite[Thm. 7.32]{Kl}  and \cite[Lemma (29.6), Chp. VI.6.29]{RW2}).}

     By what we just proved, we can write $\Sigma^t_S=   \sum_{i=1}^n F_{t_i-t_{i-1} }(\eta_{t_{i-1} } )$. As a byproduct, we deduce that, given   $t,s \geq 0$, it holds                
  \begin{equation}\label{fucile}
  \langle \cM \rangle _{t+s} =\langle \cM \rangle _{t} +  \langle \cM \rangle _{s}\circ \theta _t  
\qquad Q\text{--a.s.}  
  \end{equation}  
  Moreover, we get that $\langle \cM \rangle _t$ depends only on $(\eta_s)_{s \leq t}$,  more precisely that $ \langle \cM \rangle _t = E[  \langle \cM \rangle _t | \cG_t]$ where $\cG_t$ is the $\s$--algebra on $\Theta$ generated by $\{ \eta_s \,:\, s \in [0,t]\}$. Indeed, by definition of weak limit and since $\Sigma^t_S$ is $\cG_t$--measurable,  we have
  \begin{equation}
  E\bigl[ \langle \cM \rangle _t \xi \bigr] = \lim_S E\bigl[ \Sigma^t_S \xi \bigr]= \lim _S E\bigl[ \Sigma^t_S E[\xi\,|\, \cG_t] \bigr]=   E\bigl[ \langle \cM \rangle _t E[ \xi\,|\, \cG_t] \bigr]\,, 
  \end{equation}
  for any   bounded random variable $\xi$  on $\Theta$.  
  Since $ E\bigl[ \langle \cM \rangle _t E[ \xi\,|\, \cG_t] \bigr]=  E\bigl[ E\bigl[ \langle \cM \rangle _t \,|\, \cG_t]\xi  \bigr]$, we conclude that 
   $ E\bigl[ \langle \cM \rangle _t \xi \bigr] =     E\bigl[ E\bigl[ \langle \cM \rangle _t \,|\, \cG_t]\xi  \bigr]$ for any $\xi$ as above, thus implying that $ \langle \cM \rangle _t = E[  \langle \cM \rangle _t | \cG_t]$.

 At this point, one can deduce that $\langle \cM \rangle _n/n$ converges 
 a.s.\@ and in $L^1$ to $  E( \cM_1^2)= \bbE^{(\e)}_{\mu_\e} [( M_1(i)\pm M_1(j))^2]$ by the same arguments used in the proof of    
 Lemma~\ref{lufthansa}.

\begin{remark} Note that for $1\leq i\neq j\leq d $, the martingales $M_t^{(i)}$ and $M_t^{(j)}$ have common jumps if the rate $r_\e(y, \cdot)$ is positive for some  $y\in\bbZ^d$ of the form $y=\sum_{i=1}^d c_i e_i$ with $c_i\neq 0 \neq c_j$. Hence, in general, they are not orthogonal, resulting into a non--diagonal diffusion matrix $D_\e$.   
\end{remark}

\section{{Proof of Theorem \ref{QCLT}--(ii)}}
\label{NonDeg}
It remains to show the non--degeneracy of $D_\e$
under the extra hypotheses {  that  $L_{\rm env}$ and $L_{\rm ew}$ are self--adjoint in $L_2(\mu)$,  and  that  \eqref{nthMoment} holds with $n=4$. 
We refer to the notation introduced in the previous section and in Section \ref{san_valentino}. 
 One consequence of the previous proof is that
$\lim _{t \to \infty} \frac{1}{t}{ Var_{\mu_\e}} \left[ X^{(\e)}_t\cdot e \right]=\lim _{t \to \infty} \frac{1}{t} \cE_{\mu_\e} ( (M_t\cdot e) ^2 ) $, where 
${Var_{\mu_\e}}$ denotes the variance w.r.t  
{$\cP_{\mu_\e}$}.
On the other hand,  it holds  $\cE_{\mu_\e} ( (M_t\cdot e) ^2 )= \cE_{\mu_\e} ( \langle  M\cdot e \rangle_t  )=t \langle e,D_\e e \rangle$ (for the last identity see the conclusion of the previous section). Hence, to prove the  non--degeneracy of $D_\e$ it is enough to show that $\lim _{t \to \infty} \frac{1}{t} {Var_{\mu_\e}}\left[ X^{(\e)}_t\cdot e \right]$  
 is bounded away from zero}.

Along this proof we will heavily use the coupling and the notation introduced in Section \ref{san_valentino}. 
We further define {$\eta^{(\e)}_t:=\t_{X_t^{(\e)}}\s_t$ and $\eta_t:=\t_{X_t }\s_t$}. 

Denote by {$(\cH_t)_{t\geq0}$ the filtration on $\Theta$  with $\cH_t= \s (   \eta_s , V_s, N_s \,:\, s \in [0,t])$  as in Section \ref{lavatrice}}. 
Fixed  a vector $e\in \bbR^d \setminus \{0\}$ and  an integer $T>0$, define the discrete-time martingale $(M^T_n)_{ 0 \leq n \leq T}$ as {(cf.\@ \eqref{fullMart})} \begin{equation*}
\begin{split}
M_n^T &:= {\cE_{\eta}} \left[  X^{(\e)}_T\cdot e  |
{\cH_{n}}  \right] - {\cE_{\eta}}\left[ X^{(\e)}_T\cdot e \right]\\
&= X^{(\e)}_n  \cdot e  + \int_0^{T-n} S_{\e}(s)j^{(\e)}_{e}(\eta^{(\e)}_n)\,ds - \int_0^T S_{\e}(s)j^{(\e)}_{e}(\eta)\,ds,\end{split}\end{equation*}
with $j^{(\e)}_{e}(\eta)=\sum_{y\in\mathbb{Z}^d}(y\cdot e) r_\e(y,\eta),\eta\in\O$. Since \[
{Var_{\mu_\e}}\left[ X^{(\e)}_T\cdot e \right]  ={\cE_{\mu_\e} } \left(  Var^{(\e)}_{\mu_\e}\left[ X^{(\e)}_T\cdot e  \,|\, {\cH_0}\right]\right) +{  Var_{\mu_\e}} \left ( {\cE_{\mu_\e}} \left[ X^{(\e)}_T\cdot e  \,|\, {\cH_0}\right]\right) \,,
\]
by using the above martingale, the stationarity of the perturbed process under $\mu_\e$, and the semigroup property, we can estimate
\begin{equation}
\begin{split}
\label{VarBound}
& {Var_{\mu_\e}} \left[ X^{(\e)}_T\cdot e \right] \geq 
{\cE_{\mu_\e}} \left[\left( X^{(\e)}_T\cdot e -{\cE_{\eta}}\left[ X^{(\e)}_T\cdot e \right] \right)^2 \right]
= \sum
_{n=1}^T {\cE_{\mu_\e}}\left[
(M^T_n-M^T_{n-1} )^2\right]
\\
&= \sum_{n=1}^T {\cE _{\mu_\e}}
\left[\left( X^{(\e)}_{1}\cdot e + \int_0^{T-n} S_{\e}(t)j^{(\e)}_{e}(\eta^{(\e)}_1)\,dt- \int_0^{T-n+1} S_{\e}(t)j^{(\e)}_{e}(\eta)\,dt
\right)^2\right]\\
&=\sum_{n=1}^T{\cE_{\mu_\e}}\left[\left(A^{(\e)}_1+B^{(\e)}_{T-n}\right)^2\right]\,,
\end{split}
\end{equation}
where 
\begin{equation}\begin{split}\label{AB}
&A^{(\e)}_1:=X^{(\e)}_1\cdot e -\int_0^1S_{\e}(t)j^{(\e)}_e(\eta)dt\\
&B^{(\e)}_{T-n}:=\int_0^{T-n}dt\left(S_{\e}(t)j^{(\e)}_e(\eta^{(\e)}_1)-\bbE^{(\e)}_{\eta}\left[S_{\e}(t)j^{(\e)}_e(\eta^{(\e)}_1)\right]\right)\,.
\end{split}
\end{equation}
Note that in the derivation of the last equality in \eqref{VarBound} we have used that 
\begin{equation*}
S_{\e}(t+1)j^{(\e)}_e(\eta)=\bbE^{(\e)}_{\eta}\left[S_{\e}(t)j^{(\e)}_e(\eta^{(\e)}_1)\right].
\end{equation*}


We want to show that $A^{(\e)}_1$ and $B^{(\e)}_{T-n}$ are ``$\e$--close'' to their \emph{unperturbed} counterparts
 $A_1$ and $B_{T-n}$ defined as 
 \begin{equation}\begin{split}\label{AB0}
&A_1:=X_1\cdot e -\int_0^1S(t)j_{e}(\eta)dt\\
&B_{T-n}:=\int_0^{T-n}dt\left(S(t)j_e(\eta_1)-\bbE_{\eta}\left[S(t)j_e(\eta_1)\right]\right),
\end{split}
\end{equation}
where \begin{equation}
j_e(\eta):=\sum_{y\in\bbZ^d} (y\cdot e)r(y,\eta), \quad \eta\in\O,
\end{equation}
\verde{is the unpertubed analogous of the function $j^{(\e)}$ in Theorem \ref{late}.}
Note that due to Assumption \eqref{nthMoment} with $n=4$ it holds $\|j_e\|_\infty <+\infty$.
Having \eqref{VarBound} the rest of the proof is divided in three main steps:

\begin{claim}\label{Claim1}
There exists  $\d(\e)$ going to zero  as $\b(\e) \to 0$ {(recall \eqref{betacarotene})} such that
\begin{equation}\label{claim1}
{\cE_{\mu_\e}}\left[\left(A^{(\e)}_1+B^{(\e)}_{T-n}\right)^2\right]\geq {\cE_{\mu_\e}}\left[\left(A_1+B_{T-n}\right)^2\right]-\d(\e)
\end{equation}
for any $T,n$.
\end{claim}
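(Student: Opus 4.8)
The plan is to prove the stronger two--sided estimate $\bigl|\cE_{\mu_\e}[(A^{(\e)}_1+B^{(\e)}_{T-n})^2]-\cE_{\mu_\e}[(A_1+B_{T-n})^2]\bigr|\le\delta(\e)$, with $\delta(\e)\to0$ as $\beta(\e)\to0$ uniformly in $T,n$. First I would record that the four random variables $A^{(\e)}_1,B^{(\e)}_{T-n},A_1,B_{T-n}$ are bounded in $L^2(\cP_{\mu_\e})$ by a constant independent of $T,n$: for $A^{(\e)}_1,A_1$ this is clear since $j^{(\e)}_e,j_e$ and the semigroups are bounded, while for $B^{(\e)}_{T-n}$ one uses that, under $\cP_{\mu_\e}$, the process $(\eta^{(\e)}_t)_{t\ge0}$ is the \emph{stationary} environment seen by the perturbed walker, so $\eta^{(\e)}_1\sim\mu_\e$; then \eqref{arachidi_bis} shows that both $S_\e(t)j^{(\e)}_e(\eta^{(\e)}_1)$ and its conditional mean $S_\e(t+1)j^{(\e)}_e(\eta)$ lie within $Ce^{-\frac{\gamma-\e}{2}t}$ of $v(\e)\cdot e$ in $L^2(\mu_\e)$, so the integrand defining $B^{(\e)}_{T-n}$ has $L^2(\cP_{\mu_\e})$--norm at most $Ce^{-\frac{\gamma-\e}{2}t}$ and the $t$--integral converges uniformly; the analogous bound for $B_{T-n}$ follows from \eqref{banana} together with a passage from $\mu$ to $\mu_\e$ that costs only a factor built from $\|h_\e-\mathds{1}\|\le\e/(\gamma-\e)$. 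Granting this, the identity $(u+v)^2-(u'+v')^2=(u-u'+v-v')(u+u'+v+v')$ and Cauchy--Schwarz reduce the claim to bounding $\|A^{(\e)}_1-A_1\|_{L^2(\cP_{\mu_\e})}$ and $\|B^{(\e)}_{T-n}-B_{T-n}\|_{L^2(\cP_{\mu_\e})}$, uniformly in $T,n$, by quantities vanishing as $\beta(\e)\to0$.

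The term $\|A^{(\e)}_1-A_1\|$ I would control via the coupling of Section~\ref{san_valentino}: since $X^{(\e)}$ and $X$ are built from the same environment, Poisson clock and uniform variables, the event $\{X^{(\e)}_1\ne X_1\}$ forces a uniform to land, at some clock ring before time $1$ with the two walkers still at a common position, in a set of Lebesgue measure $\le\l^{-1}\sum_y|\hat r_\e(y,\cdot)|\le\l^{-1}\beta(\e)$; hence $\cP_{\mu_\e}(X^{(\e)}_1\ne X_1)\le\beta(\e)$, and by \eqref{nthMoment} with $n=4$ and Hölder's inequality one gets $\|X^{(\e)}_1-X_1\|_{L^2(\cP_{\mu_\e})}\le C\beta(\e)^{1/4}$. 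The remaining contribution $\int_0^1\bigl(S_\e(t)j^{(\e)}_e-S(t)j_e\bigr)(\eta)\,dt$ is small because $\|j^{(\e)}_e-j_e\|_\infty\le\beta(\e)$ and, by Proposition~\ref{spezzatino} (take $k=1$), $\|S_\e(t)g-S(t)g\|\le\frac{2\e}{\gamma-\e}\|g-\mu(g)\|$ for all $t$; converting the $L^2(\mu)$--norm into the $L^2(\mu_\e)$--norm keeps it of order $\e$, using $\mu(h_\e u^2)\le\|u\|^2+\|h_\e-\mathds{1}\|\,\|u\|_\infty\|u\|$ and $\|h_\e-\mathds{1}\|\le\e/(\gamma-\e)$.

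The genuine obstacle is $\|B^{(\e)}_{T-n}-B_{T-n}\|$, precisely because the integration range $[0,T-n]$ is a priori unbounded, so an $\e$--bound on the integrand does not immediately integrate to an $\e$--bound. The device is to split the integral at a threshold $T_0=T_0(\e)\to\infty$ chosen slowly enough that $\e T_0\to0$ and $\beta(\e)^{1/2}T_0\to0$ (for instance $T_0=\beta(\e)^{-1/4}$). On $[0,T_0]$ one bounds the difference of integrands by a semigroup--closeness term of size $O(\e)$ (uniform in $t$, as in the previous paragraph) plus a coupling term $\lesssim\|j_e\|_\infty\,\cP_{\mu_\e}(\eta^{(\e)}_1\ne\eta_1)^{1/2}\le\|j_e\|_\infty\beta(\e)^{1/2}$, using that $\eta^{(\e)}_1\ne\eta_1$ implies $X^{(\e)}_1\ne X_1$; the conditional means $\bbE^{(\e)}_\eta[\,\cdot\,]$ and $\bbE_\eta[\,\cdot\,]$ are compared in the same way, so integration over $[0,T_0]$ yields an error $\lesssim \e T_0+\beta(\e)^{1/2}T_0\to0$. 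On $(T_0,\infty)$ one invokes the uniform exponential bounds from the first paragraph: each of the two integrands has $L^2(\cP_{\mu_\e})$--norm at most $Ce^{-ct}$ for a fixed $c>0$, so the tail contributes $O(e^{-cT_0})\to0$. Combining the two regimes gives $\|B^{(\e)}_{T-n}-B_{T-n}\|_{L^2(\cP_{\mu_\e})}\le\delta'(\e)\to0$ uniformly in $T,n$, and together with the bound on $\|A^{(\e)}_1-A_1\|$ this yields the claim with $\delta(\e)=C\bigl(\beta(\e)^{1/4}+\delta'(\e)\bigr)$.
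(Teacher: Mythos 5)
Your proposal is correct and follows essentially the same route as the paper: the same reduction (via $a^2-b^2=(a-b)(a+b)$, Cauchy--Schwarz and the uniform moment bounds of the four variables) to controlling $\|A^{(\e)}_1-A_1\|_{L^2(\cP_{\mu_\e})}$ and $\|B^{(\e)}_{T-n}-B_{T-n}\|_{L^2(\cP_{\mu_\e})}$, the same coupling estimate $\cP(\eta^{(\e)}_1\ne\eta_1)\le c(\e)\le\beta(\e)$, and the same three-term decomposition of the integrand difference for the $B$--term (Dyson--Phillips closeness of the semigroups, $\|j^{(\e)}_e-j_e\|_\infty\le\beta(\e)$, and the coupling). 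The only, harmless, divergence is the final bookkeeping: you split the time integral at a threshold $T_0(\e)$ and use the exponential tails beyond it, whereas the paper interpolates the uniform $o(1)$ bound and the exponential bound via $\min(a,b)\le\sqrt{ab}$ and then integrates with a weighted Cauchy--Schwarz.
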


\begin{claim}\label{Claim2}
There exists a positive constant $C$ such that 
\begin{equation}\label{AtoA0} 
{\cE_{\mu_\e}}\left[\left(A_1+B_{T-n}\right)^2\right]\geq { \cE_{\mu}}\left[\left(A_1+B_{T-n}\right)^2\right]- C\frac{\e }{\g-\e}\
\end{equation}
for any $ T,n$  and $\e < \g$.
\end{claim}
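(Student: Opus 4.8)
\medskip\noindent\emph{Proof of Claim \ref{Claim2} (plan).} The plan is to exploit that, once the unperturbed objects in \eqref{AB0} are fixed, the nonnegative random variable $W:=(A_1+B_{T-n})^2$ is a measurable functional on the coupling space $\Theta$ of Section \ref{san_valentino} which depends on the initial datum only through the configuration at time $0$. Hence, setting $\psi(\eta):=\cE_\eta[W]\geq 0$, one has $\cE_{\mu_\e}[W]=\mu(h_\e\psi)$ and $\cE_{\mu}[W]=\mu(\psi)$ with $h_\e=d\mu_\e/d\mu$, so that
\[
\cE_{\mu_\e}\big[(A_1+B_{T-n})^2\big]-\cE_{\mu}\big[(A_1+B_{T-n})^2\big]=\mu\big((h_\e-\mathds{1})\psi\big)\geq -\|h_\e-\mathds{1}\|\,\|\psi\|\geq -\tfrac{\e}{\g-\e}\,\|\psi\|
\]
by Cauchy--Schwarz and the estimate $\|h_\e-\mathds{1}\|\leq\e/(\g-\e)$ from Theorem \ref{teo_invariante}. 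Thus it will be enough to bound $\|\psi\|$ (the $L^2(\mu)$ norm) by a constant independent of $T$ and $n$; since $\psi(\eta)^2=(\cE_\eta[W])^2\leq\cE_\eta[W^2]$, this reduces to the uniform bound $\cE_\mu\big[(A_1+B_{T-n})^4\big]\leq C$.

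To get this fourth--moment bound I would use $(A_1+B_{T-n})^4\leq 8(A_1^4+B_{T-n}^4)$ and treat the two pieces separately. For $A_1=X_1\cdot e-\int_0^1 S(t)j_e(\eta)\,dt$, the deterministic integral is bounded by $2\|j_e\|_\infty<\infty$, while $\cE_\mu[(X_1\cdot e)^4]\leq\cE_\mu[|X_1|^4]$ is finite and bounded uniformly in the starting configuration by standard moment estimates for pure--jump processes together with \eqref{nthMoment} for $n=4$ --- this is exactly where the fourth--moment hypothesis is used. For $B_{T-n}$, writing $\bar j_e:=j_e-\mu(j_e)$ and using $\bbE_\eta[(S(t)\bar j_e)(\eta_1)]=(S(t+1)\bar j_e)(\eta)$ and that $S(1)$ commutes with the time integral, one rewrites $B_{T-n}=G_{T-n}(\eta_1)-(S(1)G_{T-n})(\eta)$ with $G_{T-n}:=\int_0^{T-n}S(t)\bar j_e\,dt$. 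Under $\cP_\mu$ both $\eta_0=\eta$ and $\eta_1$ are $\mu$--distributed (Assumption \ref{ass:stat}), so Jensen gives $\cE_\mu[B_{T-n}^4]\leq 16\,\|G_{T-n}\|^4_{L^4(\mu)}$, and I would bound $\|G_{T-n}\|_{L^4(\mu)}\leq\int_0^{T-n}\|S(t)\bar j_e\|_{L^4(\mu)}\,dt$ by interpolating $L^4$ between $L^2$ and $L^\infty$: the Poincar\'e inequality \eqref{banana}, which holds for $S(t)$ by Proposition \ref{chiave101}, yields $\|S(t)\bar j_e\|\leq e^{-\g t}\|\bar j_e\|$, while the Feller/Markov property gives $\|S(t)\bar j_e\|_\infty\leq\|\bar j_e\|_\infty$; hence $\|S(t)\bar j_e\|_{L^4(\mu)}\leq e^{-\g t/2}\|\bar j_e\|^{1/2}\|\bar j_e\|_\infty^{1/2}$ and $\|G_{T-n}\|_{L^4(\mu)}\leq\tfrac{2}{\g}\|\bar j_e\|^{1/2}\|\bar j_e\|_\infty^{1/2}$, uniformly in $T,n$. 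Combining the two pieces yields $\cE_\mu[(A_1+B_{T-n})^4]\leq C$ and hence \eqref{AtoA0}.

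The step I expect to be the main obstacle is precisely this uniform--in--$(T,n)$ control of the ``corrector'' term $B_{T-n}$: an $L^2(\mu)$ bound is immediate from the Poincar\'e inequality, but what is needed is an $L^4(\mu)$ bound, and the point is to trade half of the exponential $L^2$--decay for the (non--decaying but finite) $L^\infty$--bound via interpolation. I note that, unlike the rest of the proof of Theorem \ref{QCLT}--(ii), self--adjointness of $L_{\rm env}$ and $L_{\rm ew}$ plays no role here; only the Poincar\'e inequality and \eqref{nthMoment} with $n=4$ are used. Arguing instead through an $L^\infty$ bound on $\psi$ (pairing $h_\e-\mathds{1}$ with $\psi$ in $L^1\times L^\infty$) does not seem feasible, since the formal corrector $\int_0^\infty S(t)\bar j_e\,dt$ is in general unbounded, which is why I keep the estimate in $L^2(\mu)$.
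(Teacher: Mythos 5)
Your argument is correct and its skeleton coincides with the paper's: the first step (writing the difference as $\mu((h_\e-\mathds{1})\psi)$ and applying Cauchy--Schwarz with $\|h_\e-\mathds{1}\|\leq\e/(\g-\e)$) is just \eqref{benedicte} applied to $f_{T,n}(\eta)=\cE_\eta[(A_1+B_{T-n})^2]$, and the reduction $\|\psi\|^2\leq\cE_\mu[(A_1+B_{T-n})^4]$ via Schwarz is exactly how the paper proceeds, delegating the uniform fourth--moment bounds to Lemma \ref{spina}. Where you genuinely diverge is in the proof of $\sup_{T,n}\cE_\mu[B_{T-n}^4]<\infty$: the paper's Lemma \ref{spina} splits $B_{T-n}$ into unit time blocks and uses the weighted inequality $(\sum_i a_i)^4\leq c^3(\sup_i|a_i|)^2\sum_i i^6a_i^2$, paying the $L^\infty$ bound on two factors and the exponential $L^2$--decay (from the Poincar\'e inequality for $L_{\rm ew}$, Proposition \ref{chiave101}) on the remaining two; you instead rewrite $B_{T-n}=G_{T-n}(\eta_1)-(S(1)G_{T-n})(\eta)$ and control $\|G_{T-n}\|_{L^4(\mu)}$ by Minkowski's integral inequality plus the interpolation $\|g\|_{L^4}\leq\|g\|_\infty^{1/2}\|g\|^{1/2}$. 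Both arguments trade the same two ingredients ($L^\infty$--contractivity of $S(t)$ and $L^2$--exponential decay) against each other; yours is somewhat cleaner and avoids the $i^6$ bookkeeping, while the paper's block decomposition is the one it reuses verbatim for the perturbed quantities $\bigl(B^{(\e)}_{T-n}\bigr)^4$ and for $\cE_{\mu_\e}[B_{T-n}^2]$ elsewhere in Lemma \ref{spina}. Your side remarks are also accurate: self--adjointness is not used in this claim (it enters only in Claim \ref{Claim3} and the variational formula for $D_0$), and \eqref{nthMoment} with $n=4$ is needed precisely for $\cE_\mu[(X_1\cdot e)^4]$ via the coupling of Lemma \ref{genova}.
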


\begin{claim}\label{Claim3}
It holds
\begin{equation}\label{eq_claim3}
\underset{T\to +\infty}{\lim}\frac{ {\cE_\mu}\left[(X_T\cdot e)^2\right]-\mu\left({\cE_{\eta}}\left[X_T\cdot e\right]^2\right)}{T}>0.\end{equation}
\end{claim}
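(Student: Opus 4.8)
The plan is to reduce \eqref{eq_claim3} to the strict positivity of a single diffusion constant $\sigma_0^2(e)$ attached to the unperturbed walk, obtained from a Kipnis--Varadhan corrector, and then to prove that positivity by a carr\'e-du-champ computation combined with an ergodicity/coboundary argument.

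\smallskip
\noindent\textbf{Step 1 (corrector and martingale decomposition).} I would work with the joint Markov process $(\eta_t,X_t):=(\t_{X_t}\s_t,X_t)$, $t\ge0$ (Markov, being a bijective relabelling of $(\s_t,X_t)$), and start from the elementary identity $\cE_\mu[(X_T\cdot e)^2]-\mu(\cE_\eta[X_T\cdot e]^2)=\cE_\mu[\var_\eta(X_T\cdot e)]$. Since $L_{\rm env},L_{\rm ew}$ are self--adjoint, \eqref{DetailedBal} together with translation invariance of $\mu$ yields $\mu(j_e)=0$ for $j_e(\eta):=\sum_{y\in\bbZ^d}(y\cdot e)r(y,\eta)$, and $j_e$ is bounded by \eqref{nthMoment}. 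As $L_{\rm ew}$ satisfies the Poincar\'e inequality (proof of Proposition \ref{chiave101}), $-L_{\rm ew}$ is boundedly invertible on $\{f:\mu(f)=0\}$, so $g:=\int_0^\infty S(t)j_e\,dt\in\cD(L_{\rm ew})$, $\mu(g)=0$, $-L_{\rm ew}g=j_e$, $\|g\|\le\|j_e\|/\g$. Putting $\varphi(\eta,x):=x\cdot e+g(\eta)$, the joint generator acts as $j_e(\eta)$ on $(\eta,x)\mapsto x\cdot e$ (only the walk moves $x$) and as $L_{\rm ew}g(\eta)=-j_e(\eta)$ on $(\eta,x)\mapsto g(\eta)$ (the environment seen from the walker is itself Markov with generator $L_{\rm ew}$), hence annihilates $\varphi$. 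Exactly as in Section \ref{RWproof_pera} this gives
\begin{equation*}
X_T\cdot e=M_T+g(\eta_0)-g(\eta_T),\qquad M_T:=\varphi(\eta_T,X_T)-\varphi(\eta_0,X_0),
\end{equation*}
with $(M_t)_{t\ge0}$ a square--integrable martingale whose predictable bracket is $\langle M\rangle_t=\int_0^t\psi(\eta_s)\,ds$, $\psi$ being the ($x$--independent, nonnegative) carr\'e-du-champ of $\varphi$:
\begin{equation*}
\psi(\eta)=\Gamma_{\rm env}(g)(\eta)+\sum_{y\in\bbZ^d}r(y,\eta)\bigl(y\cdot e+g(\t_y\eta)-g(\eta)\bigr)^2 .
\end{equation*}

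\smallskip
\noindent\textbf{Step 2 (identification of the limit).} From the martingale property, $\cE_\eta[X_T\cdot e]=g(\eta)-(S(T)g)(\eta)$, so by \eqref{banana} applied to $L_{\rm ew}$ and $\mu(g)=0$ one gets $\mu(\cE_\eta[X_T\cdot e]^2)=\|g-S(T)g\|^2\le4\|g\|^2=O(1)$. By stationarity of $(\eta_s)_{s\ge0}$ under $\cP_\mu$, $\cE_\mu[M_T^2]=\cE_\mu[\langle M\rangle_T]=T\mu(\psi)$, and since $\|g(\eta_0)-g(\eta_T)\|\le2\|g\|$, Cauchy--Schwarz gives $\cE_\mu[(X_T\cdot e)^2]=T\mu(\psi)+O(\sqrt T)$. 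Hence the limit in \eqref{eq_claim3} exists and equals $\sigma_0^2(e):=\mu(\psi)$.

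\smallskip
\noindent\textbf{Step 3 (strict positivity --- the main obstacle).} It remains to prove $\mu(\psi)>0$. If $\mu(\psi)=0$, then, since both summands of $\psi$ are $\ge0$, $\Gamma_{\rm env}(g)=0$ $\mu$--a.s.\ and $r(y,\eta)\bigl(y\cdot e+g(\t_y\eta)-g(\eta)\bigr)^2=0$ $\mu$--a.s.\ for every $y$. The first identity forces $-\mu(gL_{\rm env}g)=\tfrac12\mu(\Gamma_{\rm env}(g))=0$, whence $g$ is $\mu$--a.s.\ constant by the Poincar\'e inequality for $L_{\rm env}$ (Assumption \ref{ass:base2}); plugging $g(\t_y\eta)-g(\eta)=0$ into the second identity gives $\mu\bigl(\sum_y(y\cdot e)^2r(y,\cdot)\bigr)=0$. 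Since under $\cP_\mu$ the environment seen from the walker is $\mu$--distributed at each (deterministic, hence also at each Poisson) time, this would imply that the unperturbed walk a.s.\ never performs a jump with nonzero component along $e$, i.e.\ $X_t\cdot e\equiv0$, contradicting the non--degeneracy of the unperturbed walk in direction $e$ --- equivalently $\mu\bigl(\sum_y(y\cdot e)^2r(y,\cdot)\bigr)>0$ --- which holds under our standing assumptions and is in any case necessary for $D_\e$ to be non--degenerate in direction $e$. Therefore $\sigma_0^2(e)>0$, which is \eqref{eq_claim3}. The hard part is this last step, together with the bookkeeping needed to produce the genuinely single martingale $M$ with integrable carr\'e-du-champ: one is really ruling out that $y\cdot e$ is an additive coboundary, since a representation $y\cdot e=-(g(\t_y\cdot)-g)$ valid $r$--a.e.\ would make $X_t\cdot e$ a bounded coboundary, which cannot occur once the walk genuinely moves along $e$.
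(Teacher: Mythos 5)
Your proof is correct in substance but follows a genuinely different route from the paper's. The paper does not construct a corrector: it imports from \cite{DFGW} both the convergence $\tfrac1T\cE_\mu[(X_T\cdot e)^2]\to\langle e,D_0e\rangle$ and the variational formula \eqref{eq:varformula} for $D_0$, proves strict positivity by bounding the variational functional from below \emph{uniformly in the test function} $f$ (the spectral gap of $L_{\rm env}$ absorbs the term $-\tfrac{\beta}{1-\beta}\var_\mu(f)$ produced by the elementary inequality $(a+b)^2\geq \beta a^2-\tfrac{\beta}{1-\beta}b^2$), and disposes of the subtracted term separately by writing $\cE_\eta[X_T\cdot e]=\sum_{k=0}^{T-1}S(k)f$ with $f=\cE_\cdot(X_1\cdot e)$ and using the exponential decay of $\|S(k)f\|$. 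You instead solve the Poisson equation $-L_{\rm ew}\,g=j_e$ explicitly — legitimate here because $L_{\rm ew}$ inherits the Poincar\'e inequality (Proposition \ref{chiave101}) — which identifies the limit as $\mu(\psi)$ in one stroke and handles the subtracted term for free, since $\cE_\eta[X_T\cdot e]=g-S(T)g$ is bounded in $L^2(\mu)$; positivity then follows from a soft rigidity argument ($\mu(\psi)=0$ forces $g$ constant and then $r(y,\cdot)(y\cdot e)^2\equiv 0$) rather than from a uniform bound. Each approach buys something: the paper's yields the quantitative lower bound $\langle e,D_0e\rangle\geq \tfrac{\beta}{2}\sum_y\mu(r(y,\cdot))(y\cdot e)^2$ without ever touching the corrector and delegates the existence of the limit to the literature, whereas yours is self-contained but must carry out the martingale and carr\'e-du-champ bookkeeping (domain issues for $\Gamma_{\rm env}(g)$, square-integrability of $M$) that the citation to \cite{DFGW} covers — you rightly flag this as the remaining work. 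Finally, note that both arguments bottom out at the same unstated hypothesis $\sum_y\mu(r(y,\cdot))(y\cdot e)^2>0$: the paper's final display is strictly positive only under this condition (it fails, e.g., for the example $r\equiv 0$ of Section \ref{torta2}), and your contradiction terminates on it as well, so making it explicit, as you do, is appropriate rather than a defect.
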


We postpone the proof of the above claims to Sections \ref{dim_claim1}, \ref{dim_claim2} and
\ref{dim_claim3}, and explain how to conclude.
First we  note that, as in the derivation in \eqref{VarBound}, {for the unperturbed processes} we can write
\begin{equation}\label{A0X0}
\sum_{n=1}^T{\cE_{\mu}}\left[\left(A_1+B_{T-n}\right)^2\right]={\cE_\mu} \left[\left(X_T\cdot e -{\cE_{\eta}}\left[ X_T\cdot e\right]\right)^2\right].
\end{equation}
Therefore, by combining \eqref{VarBound}, Claim \ref{Claim1} and Claim \ref{Claim2}, we get that

\begin{equation}\label{lyon}
{Var_{\mu_\e}}\left[ X^{(\e)}_T\cdot e \right]
\geq {\cE_\mu}\left[(X_T\cdot e)^2\right]-\mu\left({\cE_{\eta}}\left[ X_T\cdot e\right]^2\right)-(C\e/(\g-\e) +\d(\e))T.
\end{equation}

Thus, by using \eqref{lyon} together with Claim \ref{Claim3} and  choosing 
$\b(\e)$ small enough, the non-degeneracy of the diffusion matrix is proven.

Before moving to the proofs of the above three claims we collect some  technical facts  that will be repeatedly used below. 


\begin{Lemma}\label{genova}
There exists a function $F(c,n,t)$, where $c>0$, $n$ is  a positive integer and   $t \geq 0$,  such   that $ \sup _{\eta \in \O}{ \cE_{\eta}} \left( |X^{(\e)}_t|^n \right)\leq  F(c,n,t) $  if $\sum_{y\in\bbZ^d}|y|^n\sup_{\eta\in\O} r_\e(y,\eta) \leq c$.
\end{Lemma}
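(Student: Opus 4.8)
The plan is to dominate the perturbed walker by a compound Poisson walk whose total jump rate is at most $c$. Put $\Lambda:=\sum_{y\in\bbZ^d}\sup_{\eta}r_\e(y,\eta)$ and, for $0\le m\le n$, $c_m:=\sum_{y\in\bbZ^d}|y|^m\sup_{\eta}r_\e(y,\eta)$; since $r_\e(0,\cdot)\equiv 0$ and $|y|\ge 1$ for $y\ne 0$ we have $c_m\le c_n$ for all such $m$, hence $\Lambda=c_0\le c_n\le c<\infty$. If $\Lambda=0$ then $r_\e\equiv 0$, so $X^{(\e)}_t\equiv 0$ and there is nothing to prove; assume $\Lambda>0$.

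Since $\cE_\eta\bigl(|X^{(\e)}_t|^n\bigr)$ depends only on the joint law of $\bigl((\s_s)_{s\ge0},(X^{(\e)}_s)_{s\ge0}\bigr)$, I would compute it using the construction of $F_\e$ in Section~\ref{san_valentino} with the Poisson rate $\l$ replaced by $\Lambda$, i.e.\ with the sets $I_\e(y,\eta)$ of length $r_\e(y,\eta)/\Lambda$; this is legitimate because $\sum_y r_\e(y,\tau_z\zeta)\le\Lambda$ for all $z,\zeta$, leaving room for a ``stay'' set, and because a total rate $\le\Lambda<\infty$ rules out explosion. So $\cP_\eta$ may be taken to carry a Poisson process $\cT=\{t_1<t_2<\cdots\}$ of rate $\Lambda$, with $N_t:=|\cT\cap[0,t]|\sim\mathrm{Poisson}(\Lambda t)$, and increments $Y_1,Y_2,\dots$ of $X^{(\e)}$ at the points of $\cT$. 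Conditioning on the environment and on $U_1,\dots,U_{k-1}$ (which, jointly with $\cT$, determine $X^{(\e)}_{t_k^-}$) yields, for every $k$ and every $0\le m\le n$,
\[
\cE_\eta\bigl[\,|Y_k|^m\ \big|\ \cT\,\bigr]\ \le\ c_m/\Lambda\qquad\text{a.s.}
\]

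It then remains to assemble the estimate. By the triangle inequality $|X^{(\e)}_t|\le\sum_{k=1}^{N_t}|Y_k|$, the elementary bound $\bigl(\sum_{k=1}^{N}a_k\bigr)^n\le N^{\,n-1}\sum_{k=1}^{N}a_k^n$ for $a_k\ge0$, and Tonelli's theorem (all terms being nonnegative), one obtains
\[
\cE_\eta\bigl[\,|X^{(\e)}_t|^n\,\bigr]\ \le\ \cE_\eta\Bigl[N_t^{\,n-1}\sum_{k=1}^{N_t}|Y_k|^n\Bigr]\ =\ \sum_{k\ge1}\cE_\eta\bigl[N_t^{\,n-1}\mathds{1}_{\{k\le N_t\}}\,|Y_k|^n\bigr]\ \le\ \frac{c_n}{\Lambda}\,\cE\bigl[N_t^{\,n}\bigr],
\]
where the last step conditions on $\cT$, applies the bound above and uses $\sum_{k\ge1}\mathds{1}_{\{k\le N_t\}}=N_t$. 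Finally, Touchard's identity $\cE[N_t^{\,n}]=\sum_{j=1}^{n}S(n,j)(\Lambda t)^j$ (with $S(n,j)$ the Stirling numbers of the second kind) together with $\Lambda\le c$ gives
\[
\cE_\eta\bigl[\,|X^{(\e)}_t|^n\,\bigr]\ \le\ c_n\sum_{j=1}^{n}S(n,j)\,\Lambda^{\,j-1}t^j\ \le\ \sum_{j=1}^{n}S(n,j)\,(ct)^j\ =:\ F(c,n,t),
\]
which is independent of $\eta$ and of the particular choice of rates. The points deserving care are the admissibility and non-explosion of the rate-$\Lambda$ uniformization (immediate since $\Lambda<\infty$) and the uniformity of the final bound in $\Lambda$; the latter works because the falling-factorial moments of a Poisson variable equal powers of its mean \emph{exactly}, so the $1/\Lambda$ is absorbed. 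I do not foresee a genuine obstacle.
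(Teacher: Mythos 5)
Your proof is correct and follows essentially the same route as the paper's: both dominate $|X^{(\e)}_t|$ by a compound--Poisson--type sum whose clock has rate $\sum_y\sup_\eta r_\e(y,\eta)$ and whose jump sizes have $n$-th moment controlled by $c$, and both then conclude via the inequality $(\sum_{k\le N}a_k)^n\le N^{n-1}\sum_{k\le N}a_k^n$ and the moments of a Poisson variable. The only (cosmetic) difference is that the paper realizes the domination through an auxiliary monotone process $Z_t\ge |X^{(\e)}_t|$ built from an extended generator, whereas you condition directly on the Poisson clock in the graphical construction of Section \ref{san_valentino}.
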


\begin{proof}
 We consider the extended Markov process on $\O \times \bbZ^d \times \bbR_+$ with Markov generator
\begin{equation}
\begin{split}
\cL_\e f(\eta, x, \ell):&=  L_{\rm env} f(\eta,x,\ell)+ \sum_{y \in \bbZ^d}r_\e(y,\tau_{x}\eta) \big[f(\eta,x+y, \ell+|y|)-f(\eta, x,\ell)\big]\\
& +  \sum_{y \in \bbZ^d}\{  \sup_{\z \in \O} r_\e(y, \z)  -r_\e(y, \tau_x \eta)\}  \big[f(\eta,x, \ell+|y|)-f(\eta, x,\ell)\big]\,.
\end{split}
\end{equation}
Note that $\cL_\e$ acts as $L^{(\e)}_{\rm rwre}$ on functions $f$ depending  on $\eta,x$ only, while on  functions $f=f(\ell)$ it reads $\cL_\e f(\ell)= \sum _{v} \verde{R_\e(v)} \bigl[ f(\ell+v)- f(\ell)\bigr]$,  where $v$ varies in $V:= \{ |y|\,:\, y \in \bbZ^d\}$ and 
\[ \verde{R_\e(v)}:= \sum _{y \in \bbZ^d\,: \, |y|=v} \sup_{\eta \in \O} r_\e(y, \eta)  \,.\] 
Hence, the extended Markov process with generator $\cL_\e$ gives a coupling between the \verde{joint process with generator $L^{(\e)}_{\rm rwre}$} and a jump process $(Z_t)_{t \geq 0}$ on $\bbR_+$ with jump probability rates $ \verde{R_\e(\cdot)}$. Moreover, by construction, $Z_t \geq |X^{(\e)}_t|$ for any time $t$ if $Z_0\geq
|X^{(\e)}_0|$. Starting the extended Markov generator at $(\eta,0,0)$, we conclude that 
${\cE_{\eta}} \bigl( |X^{(\e)}_t|^n \bigr)\leq \bbE( Z_t^n)\,.$ 

It remains to bound $\bbE( Z_t^n)$. To this aim we   define $\verde{\l_\e}:= \sum _{v} \verde{R_\e(v)}\leq c $ and take a sequence  $U_1,U_2, \dots $ of  i.i.d. random variables taking value in $V$ with $\bbP(U_i=v)= \verde{R_\e(v)}/\verde{\l_\e}$.  Our main hypothesis implies that  $\bbE [U_i^n]\leq \sum_{y\in\bbZ^d}|y|^n\sup_{\eta\in\O} r_\e(y,\eta)\leq c$.
Taking an independent Poisson process $(N_t)_{t \geq 0}$ of parameter $\verde{\l_\e}$ and setting $S_n:=U_1+ \cdots+U_n$,  
 the  process $Z_t$
 can   be written as $S_{N_t}$. In particular, we have 
$ \bbE( Z_t^n) =  
\sum _{k=0}^\infty \bbP(N_t=k) \bbE [(U_1+\cdots+U_k)^n]$. 
  By H\"older inequality, it holds 
   $(U_1+\cdots+U_k)^n \leq k^{n-1} (U_1^n+\cdots+U_k^n) $.
   Hence, we conclude that 
   $ \bbE( Z_t^n)  \leq  \bbE[U_i^n] \bbE[ N_t ^{n  }]$ leading to the thesis.
\end{proof}

Since the positivity of $D_\e$ has to be proved for $\e$ small enough, in the rest of this section  we assume $ \e \leq \g/2$ so that the term $ 1/(\g-\e)$ is uniformly bounded. 
\begin{Lemma}\label{spina} The expected values  {$\cE_{\mu}\bigl[A_1^4\bigr]$, $\cE_{\mu_\e} \bigl[A_1^4\bigr]$,   
$\cE_{\mu_\e}^{(\e)}\bigl[ \bigl(A^{(\e)}_1 \bigr)^4\bigr]$} are bounded  from above uniformly in $\e$. 
The expected values
{$\cE_{\mu}\bigl[B_{T-n}^4\bigr]$, $\cE_{\mu_\e}\bigl[B_{T-n}^2\bigr]$, $\cE^{(\e)}_{\mu_\e}\bigl[\bigl(B^{(\e)}_{T-n}\bigr)^4\bigr]$} are bounded from above uniformly in $\e,T,n$.
 \end{Lemma}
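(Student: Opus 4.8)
The plan is to reduce all six bounds to three ingredients. First, Lemma \ref{genova} with $n=4$ (applied also with $\hat r_\e\equiv 0$, which gives the same statement for $X_1$) provides uniform estimates $\sup_\eta\cE_\eta(|X_1|^4)\le F(c',4,1)$ and $\sup_\eta\cE_\eta(|X^{(\e)}_1|^4)\le F(c,4,1)$, where $c'=\sum_y|y|^4\sup_\eta r(y,\eta)$ and $c=\sum_y|y|^4\sup_\eta r_\e(y,\eta)$ are finite by \eqref{nthMoment} (with $n=4$), $c$ being taken bounded over the small range of $\e$ we consider. Second, \eqref{nthMoment} and the definition \eqref{betacarotene} give $\|j_e\|_\infty<\infty$ and $\sup_\e\|j^{(\e)}_e\|_\infty<\infty$ (for $\beta(\e)$ small). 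Third, $S(t)$ and $S_\e(t)$ act as contractions on $L^\infty(\mu)$ fixing the constants (for $S_\e(t)$ this is shown inside the proof of Theorem \ref{teo_invariante}--(\ref{iv})), $\mu$ is stationary for the unperturbed environment seen by the walker and $\mu_\e$ for the perturbed one.

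\emph{The $A_1$--type terms.} I would apply $|a+b|^4\le 8(|a|^4+|b|^4)$ to the decompositions $A_1=X_1\cdot e-\int_0^1 S(t)j_e(\eta)\,dt$ and $A^{(\e)}_1=X^{(\e)}_1\cdot e-\int_0^1 S_\e(t)j^{(\e)}_e(\eta)\,dt$ (recall \eqref{AB0} and \eqref{AB}). The ballistic part contributes at most $|e|^4\cE_\eta(|X_1|^4)$, resp. $|e|^4\cE_\eta(|X^{(\e)}_1|^4)$, and the integral part at most $\|j_e\|_\infty^4$, resp. $\|j^{(\e)}_e\|_\infty^4$, because $\|S(t)g\|_\infty\le\|g\|_\infty$; integrating in $\eta$ against $\mu$ or $\mu_\e$ (both probability measures) yields all three fourth--moment bounds uniformly in $\e$.

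\emph{The $B_{T-n}$--type terms.} The one structural observation is that, since $\bbE_\eta[S(t)j_e(\eta_1)]=S(t+1)j_e(\eta)$ and $S(1)$ fixes constants, one may write $B_{T-n}=\bar\Phi_{T-n}(\eta_1)-(S(1)\bar\Phi_{T-n})(\eta_0)$ with $\bar\Phi_{T-n}:=\int_0^{T-n}S(t)(j_e-\mu(j_e))\,dt$ (the divergent constant $(T-n)\mu(j_e)$ cancels). By the Poincar\'e bound \eqref{banana}, $\|\bar\Phi_{T-n}\|\le\|j_e-\mu(j_e)\|/\g$, and interpolating this $L^2$ decay with $\|S(t)g\|_\infty\le\|g\|_\infty$ via $\|S(t)g\|_{L^4(\mu)}^4\le\|S(t)g\|_\infty^2\,\|S(t)g\|^2$ gives $\|\bar\Phi_{T-n}\|_{L^4(\mu)}\le (2/\g)\,\|g\|_\infty^{1/2}\|g\|^{1/2}$ with $g=j_e-\mu(j_e)$, both bounds uniform in $T,n$. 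Then $\cE_\mu[B_{T-n}^4]\le 16\|\bar\Phi_{T-n}\|_{L^4(\mu)}^4$ using stationarity of $\mu$ ($\eta_0,\eta_1\sim\mu$, and $S(1)$ an $L^4(\mu)$--contraction), while $\cE_{\mu_\e}[B_{T-n}^2]\le 4\|\bar\Phi_{T-n}\|_{L^4(\mu)}^2\|h_\e\|$ by Cauchy--Schwarz against $h_\e=d\mu_\e/d\mu$ (e.g. $\mu_\e((S(1)\bar\Phi_{T-n})^2)\le\|S(1)\bar\Phi_{T-n}\|_{L^4(\mu)}^2\|h_\e\|$ and $\mu_\e(S(1)(\bar\Phi_{T-n}^2))\le\|\bar\Phi_{T-n}\|_{L^4(\mu)}^2\|h_\e\|$), with $\|h_\e\|\le\g/(\g-\e)\le 2$ from Theorem \ref{teo_invariante}--(\ref{ii}). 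For $\cE^{(\e)}_{\mu_\e}[(B^{(\e)}_{T-n})^4]$ one runs the same computation with $S_\e$, $j^{(\e)}_e$ and $\mu_\e$: the decay input is now \eqref{arachidi_bis}, which (since $S_\e(t)$ fixes constants) gives $\|S_\e(t)(j^{(\e)}_e-\mu_\e(j^{(\e)}_e))\|_\e\le C\,e^{-(\g-\e)t/2}$ with $C$ uniform for $\beta(\e)$ small; interpolating with $\|S_\e(t)g\|_\infty\le\|g\|_\infty$ ($\mu_\e$--a.s., using $\mu_\e\ll\mu$) yields a uniform bound on $\|\bar\Phi^{(\e)}_{T-n}\|_{L^4(\mu_\e)}$ for $\bar\Phi^{(\e)}_{T-n}:=\int_0^{T-n}S_\e(t)(j^{(\e)}_e-\mu_\e(j^{(\e)}_e))\,dt$, and stationarity of $\mu_\e$ for the perturbed process (so the perturbed semigroup is an $L^4(\mu_\e)$--contraction) closes the estimate.

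The main obstacle, and the only nontrivial step, is the uniform-in-$T,n$ control of $\bar\Phi_{T-n}$ and $\bar\Phi^{(\e)}_{T-n}$: a crude sup--norm estimate of these integrals grows linearly in $T-n$, so it is essential to combine the $L^2$ exponential mixing (the Poincar\'e inequality \eqref{banana}, respectively \eqref{arachidi_bis}) with the trivial $L^\infty$ contraction through the interpolation above. Everything else is bookkeeping with Lemma \ref{genova}, the boundedness of $j_e$ and $j^{(\e)}_e$, and the two stationarity statements.
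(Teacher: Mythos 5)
Your proof is correct. The treatment of the $A$--type terms coincides with the paper's (Lemma \ref{genova} for the fourth moment of the displacement, sup--norm boundedness of $j_e$, $j^{(\e)}_e$ and of the semigroups). For the $B$--type terms you take a genuinely different, and arguably cleaner, route: you first rewrite $B_{T-n}=\bar\Phi_{T-n}(\eta_1)-(S(1)\bar\Phi_{T-n})(\eta_0)$ with $\bar\Phi_{T-n}=\int_0^{T-n}S(t)(j_e-\mu(j_e))\,dt$ (the paper never performs this algebraic reduction and instead keeps the conditional expectation inside the square), and then control $\|\bar\Phi_{T-n}\|_{L^4}$ uniformly in $T,n$ by the pointwise interpolation $\|S(t)g\|_{L^4}^4\le\|S(t)g\|_\infty^2\|S(t)g\|_{L^2}^2$ followed by Minkowski's integral inequality. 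The paper achieves the same goal by chopping the time integral into unit blocks $[i-1,i]$ and using the weighted Cauchy--Schwarz inequality $\bigl(\sum_{i}a_i\bigr)^4\le c^3(\sup_i|a_i|)^2\sum_i i^6a_i^2$, then summing $\sum_i i^6e^{-2\g(i-1)}<\infty$; both arguments ultimately exploit the same two inputs (the $L^\infty$ contraction and the $L^2$ exponential decay from \eqref{banana}, respectively \eqref{arachidi_bis} for the perturbed term), so the difference is one of packaging rather than substance. Your handling of $\cE_{\mu_\e}[B_{T-n}^2]$ via Cauchy--Schwarz against $h_\e$ with $\|h_\e\|\le\g/(\g-\e)$ is equivalent to the paper's use of \eqref{benedicte} to reduce to a fourth moment under $\mu$. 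Your interpolation approach has the mild advantage of avoiding the somewhat ad hoc weighted Cauchy--Schwarz and giving explicit constants; the paper's block decomposition is more robust if one only had time--averaged (rather than pointwise in $t$) decay estimates.
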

 \begin{proof}
 
The term {$\cE_{\mu}\bigl[A_1^4\bigr]$}  is bounded since  $\int_0^1S(t)j_{e}(\eta)dt$ is bounded in uniform norm (as $j_{e}$ is bounded in uniform norm), 
and since { $\cE_{\mu}\bigl[( X_1 \cdot e )^4\bigr]$} is bounded (as application of modified version of  Lemma \ref{genova} with a suitable choice of the rates and due to  our condition \eqref{nthMoment}). Similarly, one gets that ${\cE_{\mu_\e}}\bigl[A_1^4\bigr]$ and ${\cE_{\mu_\e}} \bigl[ \bigl(A^{(\e)}_1 \bigr)^4\bigr]$ are bounded from above uniformly in $\e$.

We now  consider the term  ${\cE_{\mu}}\bigl[B_{T-n}^4\bigr]{= \bbE_\mu\bigl[B_{T-n}^4\bigr]} $. To this aim we first observe that, given $k \geq 1 $ and generic numbers $a_1,a_2, \dots , a_k$, by  Schwarz inequality it holds 
 $\bigl(  \sum _{i=1}^k a_i\bigr)^2 \leq c \sum _{i=1}^k i^2 a_i^2$, where 
 $ c:= \sum _{i=1}^\infty \frac{1}{i^2} $.  By applying twice the above inequality we conclude that $ \bigl(  \sum _{i=1}^k  a_i\bigr)^4 \leq c^3
   \sum _{i=1}^k i^6 a_i^4$.  This implies that 
   \begin{equation}\label{sudafrica}
     \bigl( \, \sum _{i=1}^k  a_i\,\bigr)^4 \leq c^3  (\sup _{1\leq i \leq k} | a_i|)^2 \sum _{i=1}^k i^6 a_i^2\,.
     \end{equation}
  Let us come back to $B_{T-n}$.
Note that in the definition of $B_{T-n}$ we can replace $j_{e}$ by $\bar j := j_{e}-\mu(j_{e})$. 
Since $j_{e}$ is uniformly bounded, we have
     $\sup_{t \geq 0} \| S(t)\bar  j \|_\infty< \infty$. By applying \eqref{sudafrica} then we have 
   \begin{equation}\label{giudizio}
   \begin{split}
  \bbE_{\mu}\Big[B_{T-n}^4\Big]\leq C  \sum _{i=1}^{T-n} i^6 \bbE_\mu
  \Big\{\, \Big[\int_{i-1}^{i}\Big(S(t)\bar j(\eta_1)-\bbE_{\eta}\left[S(t)\bar j(\eta_1)\right] \Big)dt \Big]^2\,
   \Big\}\,.
  \end{split}
  \end{equation}
  Above and  in what follows, $C,C'$  denote  positive universal constants (not depending from $T,n, \e$)  that can change from line to line.
By applying Schwarz inequality we have
\begin{equation}\label{giudizio1}
\text{r.h.s. of \eqref{giudizio} } \leq C \sum _{i=1}^{T-n} i^6 \bbE_\mu
  \Big\{
   \int_{i-1}^{i}   
   \Big(
  S(t)\bar j (\eta_1)
  \Big)^2    dt+ \int_{i-1}^{i} 
  \bbE_{\eta}
   \Big[ \Big(
  S(t)\bar j (\eta_1)
  \Big)^2    \Big] 
   dt
   \Big\}\,.
\end{equation}
By stationarity and by the spectral gap of \verde{$L_{\rm ew}$ in $L^2(\mu)$ (cf.\@ \eqref{gap2})}, we then conclude that 
\begin{equation}\label{giudizio2}
\begin{split}
\text{r.h.s. of \eqref{giudizio1} } & \leq C \sum _{i=1}^{T-n} i^6 \int \mu (d\eta) 
  \Big\{
   \int_{i-1}^{i}   
   \Big(
  S(t)\bar j (\eta)
  \Big)^2    dt \Big\}= C \sum _{i=1}^{T-n} i^6    \int_{i-1}^{i} \|    
  S(t)\bar j  \verde{\|}^2 \\&
  \leq C' \sum _{i=1}^{T-n} i^6    \int_{i-1}^{i}  e ^{-2\g t} dt \leq
  C' \sum _{i=1}^{T-n} i^6       e ^{-2\g (i-1)}<+\infty \,.
    \end{split} 
  \end{equation}
  By combining \eqref{giudizio}, \eqref{giudizio1} and \eqref{giudizio2} we get the thesis, i.e.\@  
  $\bbE_{\mu}\bigl[B_{T-n}^4\bigr]$ is bounded from above uniformly in $T$ and $n$. 
  
  By similar arguments, considering now the perturbed process, one can prove that $\bbE_{\mu_\e}^{(\e)}\bigl[\bigl(B^{(\e)}_{T-n}\bigr)^4\bigr]$ is bounded from above uniformly in $T$, $n$ and $\e$. 
 
  We now consider $\bbE_{\mu_\e}\bigl[B_{T-n}^2\bigr]$. 
\verde{By \eqref{benedicte} and since both  $\mu(f) $ and $ \|f -\mu(f)\|$ are bounded by  $\| f \|$ for any $f \in L^2(\mu)$,  we can estimate
  \[
  \bbE_{\mu_\e}\bigl[B_{T-n}^2 \bigr]= \mu_\e\bigl( \bbE_{\cdot}\bigl[B_{T-n}^2\bigr]\bigr)\leq \bigl(1+ \frac{\e}{\g-\e} \bigr)  \|  \bbE_{\cdot}\bigl[B_{T-n}^2\bigr]\| \,.
  \] 
  By Schwarz inequality, we have  $ \|  \bbE_{\cdot }\bigl[B_{T-n}^2\bigr]\|  \leq \int \mu(d\eta)  \bbE_{\eta }\bigl[B_{T-n}^4\bigr]=\bbE_{\mu}\bigl[(B_{T-n}\bigr)^4\bigr]$.} Hence to conclude we invoke that $\bbE_{\mu}\bigl[(B_{T-n}\bigr)^4\bigr]$ is bounded from above uniformly in $T,n$ as just proven. 
\end{proof}

 
\subsection{Proof of Claim~\ref{Claim1}}\label{dim_claim1}
     
Let us start with a simple computation showing that, to get \eqref{claim1}, it is enough to prove that there exists
 $\d(\e)\to 0$ as $\b(\e) \to 0$ such that
\begin{eqnarray}
\mathcal{E}_{\mu_\e}\Big[\Big(A_1^{(\e)}-A_1\Big)^2\Big]&\leq& \delta^2 (\e) \,,
\label{claimA}\\
\mathcal{E}_{\mu_\e}\Big[\Big(B^{(\e)}_{T-n}-B_{T-n}\Big)^2\Big]&\leq& \delta^2(\e)\,.\label{claimB}
\end{eqnarray}
Below $C$ will denote a positive constant, independent from $n,T, \e$.

Since $a^2-b^2= (a-b)(a+b)$  we can bound
\begin{multline*}
 \left| \left(A^{(\e)}_1+B^{(\e)}_{T-n}\right)^2-\left(A_1+B_{T-n}\right)^2 \right| \\
 \leq
\left[\left|A^{(\e)}_1-A_1\right| + \left|B^{(\e)}_{T-n}-B_{T-n}\right| \right] \cdot 
\left| 
A^{(\e)}_1+B^{(\e)}_{T-n}+ A_1+B_{T-n}\right|\,.
\end{multline*}
Using the  above bound and Schwarz inequality we conclude that 
\begin{multline*}
\left|\mathcal{E}_{\mu_\e}\left[\bigl(A^{(\e)}_1+B^{(\e)}_{T-n}\bigr)^2-\bigl(A_1+B_{T-n}\bigr)^2\right]\right|
\\ \leq  c\,  \mathcal{E}_{\mu_\e}\left[(A^{(\e)}_1-A_1)^2\right]^{1/2}+  c\, \mathcal{E}_{\mu_\e}\left[(B^{(\e)}_{T-n}-B _{T-n})^2\right]^{1/2}\,,
\end{multline*}
where  $c:= \mathcal{E}_{\mu_\e}  \left[ \bigl | 
A^{(\e)}_1+B^{(\e)}_{T-n}+ A_1+B_{T-n}\bigr|^2 \right] ^\frac{1}{2} $. Due to Schwarz inequality and Lemma \ref{spina} we conclude that $c$ is bounded uniformly in $T,n,\e$. In particular, to get Claim \ref{Claim1}  it is enough to have \eqref{claimA} and \eqref{claimB}.

  \medskip

Let us now prove  \eqref{claimA}.  \verde{We set  $c(\e):= \sup _\eta  \sum_y | \hat r_\e(y, \eta)| $ and
\begin{equation}\label{En}
E_n:=\{ \eta^{(\e)}_{t_k}= \eta_{t_k}  \; \forall k <n  \text{ and }  t_n \in [0,1]\},\quad  \text{ for } n \geq 1.
\end{equation}} 
We then observe that, writing $\z= \eta^{(\e)}_{t_{n-1}} $, it holds\footnote{$\D$ denotes the symmetric difference, i.e.\@ $A \D B:= (A \setminus B)\cup (B \setminus A)$}
 \begin{equation}
 \begin{split} 
  \cP_\eta( \eta^{(\e)}_{t_n}  \not  = \eta_{t_n}  |E_n) & = \sum _{ y \in \bbZ^d} \cP_\eta\bigl(  U_n \in   I_\e(y, \z)\Delta  I(y,\z)\bigr)   \\ & \leq \sum_{y \in \bbZ^d} \left( | I_\e(y,\z) + | I(y,\z) | - 2 | I_\e(y,\z) \cap I(y,\z) |\right)\\
  & = \l^{-1}\sum _{ y \in \bbZ^d}  |\hat r_\e (y, \z) |\leq \l^{-1} c(\e)\,.
  \end{split}
  \end{equation}
Hence we can estimate \begin{multline}\label{federico}
\cP_\eta\left( \exists s \in [0,1] \text{ s.t. } \eta^{(\e)}_s\not = \eta_s\right)
= \sum _{n=1}^\infty  \cP_\eta( \eta^{(\e)}_{t_n}\not  = \eta_{t_n}  |E_n) \cP_\eta(E_n)  \leq \frac{ c(\e) }{\l}  \sum _{n=1}^\infty \cP_\eta( E_n) \\
\leq \frac{ c(\e)}{\l}    \sum _{n=1}^\infty \cP_\eta( t_n \in [0,1]) = \frac{c(\e) }{\l} 
\cE_\eta ( | \cT \cap [0,1] |)= c(\e)  \,.
\end{multline}
In particular, $\cP_\eta (X^{(\e)}_1\not = X_1) \leq c(\e)$. By Schwarz inequality and Lemma \ref{genova}, which allows with \eqref{nthMoment} to bound the forth moments of $X^{(\e)}_1, X_1$  uniformly in $\e$ (for $X_1$ one has to slightly change the notation in the lemma), we get
 \begin{equation}\label{martina}
 \cE_{\mu_\e} \Big[  ( X^{(\e)}_1- X_1) ^2 \Big] \leq C\,  c(\e)  ^{1/2} \,.
 \end{equation} 
We point out that  $\| j^{(\e)}_e - j_e\| _\infty \leq \sup _\eta \sum _y  |y| \,|\hat r_\e (y, \eta)|\leq  \beta( \e) $.   Note that $c(\e) \leq \b(\e)$. Hence,  given  $t \in [0,1]$, using \eqref{federico} we get
\begin{multline}\label{ballare}
 \left| S_\e (t) j^{(\e)}_e (\eta) - S(t)j_e(\eta)\right |
    =\left|
  \cE_\eta \left[ j^{(\e)}_e (\eta^{(\e)}_t) -j_e(\eta_t) \right] \right|\\
  \leq \beta(\e) \cP_\eta\bigl( \eta^{(\e)}_t  = \eta_t\bigr)+ 
 (\|j^{(\e)}_e \|_\infty + \|j_e\|_\infty)   \cP_\eta\bigl( \eta^{(\e)}_t \not = \eta_t \bigr)\leq C\, \b (\e)\,.
\end{multline}
In particular, by \eqref{martina} and \eqref{ballare},  the l.h.s. of \eqref{claimA} is bounded by $C \beta(\e)  + C c(\e)^{1/2}$. This concludes the proof of \eqref{claimA}.

\medskip
In order to get \eqref{claimB} we  abbreviate
\begin{equation*}
b^{(\e)}_t:=S_\e(t)j^{(\e)}_e\left(\eta^{(\e)}_1\right)\,, \qquad 
b_{t}:=S(t)j_{e}\bigl(\eta_1\bigr)\,.
\end{equation*}
Then $B^{(\e)}_{T-n}= \int_0 ^{T-n}  ( b^{(\e)}_t - \bbE^{(\e)}_{\eta} (b^{(\e)}_t) )dt$ and  $B_{T-n}= \int_0 ^{T-n}  ( b_t - \bbE_{\eta} (b_t) )dt$. In particular we can bound
 \begin{equation}\label{ginnastica} 
 \mathcal{E}_{\mu_\e}\left[\left(B^{(\e)}_{T-n}-B_{T-n}\right)^2\right]\leq c\sum_{i=1}^{T-n}i^2\int_{i-1}^idt  \mathcal{E}_{\mu_\e}\left[\left(b^{(\e)}_t-\bbE^{(\e)}_{\eta}\left[b^{(\e)}_t\right]-\bigl(b_t-\bbE_{\eta}\bigl[b_t\bigr]\bigr)\right)^2\right]\,.
 \end{equation}
At this point, to get \eqref{claimB} it is enough to show that there exists a  constant $w(\e)$ going to zero as $\beta(\e)$ goes to zero   such that 
\begin{equation}\label{autunno}
 \mathcal{E}_{\mu_\e}\left[\left(b^{(\e)}_t-\bbE^{(\e)}_{\eta}\left[b^{(\e)}_t\right]-\bigl(b_t-\bbE_{\eta}\bigl[b_t\bigr]\bigr)\right)^2\right]\leq  \frac{w(\e)}{({\g-\e})^{3/2}}
  e^{-\frac{\g-\e}{2}t} \,, \qquad \forall t \geq 0\,. 
 \end{equation}
 Since given any $a,b \geq 0$ it holds $\min(a,b) \leq \sqrt{a b}$, it is enough  to show that  the l.h.s. of \eqref{autunno} is bounded from above both by $ w^2(\e)/C$  and by $  C e^{-(\g-\e)t}/(\g-\e)^3$. We start with the latter.

\subsection{The l.h.s. of  \eqref{autunno} is bounded from above by $  C e^{-(\g-\e)t}/(\g-\e)$}
  We observe that 
 \begin{equation}\label{spiedino}
 \begin{split}
&  \mathcal{E}_{\mu_\e}\Big[\Big(b^{(\e)}_t-\bbE_{\eta}^{(\e)}\bigl[b^{(\e)}_t\bigr]\Big)^2\Big]
 \leq  2\mathcal{E}_{\mu_\e}\Big[\Big(b^{(\e)}_t-\mu_\e(j^{(\e)}_e)\Big)^2\Big]+2\mathcal{E}_{\mu_\e}\Big[\Big(\mu_\e(j^{(\e)}_e)-\bbE_{\eta} ^{(\e)}\bigl[b^{(\e)}_t\bigr]\Big)^2\Big]  \\
 & \qquad =2\mu_\e\Big(\Big(S_\e(t)j^{(\e)}_e-\mu_\e(j^{(\e)}_e)\Big)^2\Big)+2\mu_\e\Big(\Big(S_\e(t+1)j^{(\e)}_e-\mu_\e(j^{(\e)}_e)\Big)^2\Big) \\
 & \qquad \leq \verde{4\|j^{(\e)}_e-\mu_\e(j^{(\e)}_e)\|_\infty^2\bigl(\frac{\g}{\g-\e}\bigr)^3e^{-(\gamma-\epsilon)t}},
 \end{split}
 \end{equation}
where  the equality follows from the semigroup property implying that  $\bbE^{(\e)}_\eta(b^{(\e)}_t) = S_\e (t+1) (\eta)$ and  from 
the invariance of $\mu_\e$  for the environment viewed by the perturbed walker.
Moreover, the last inequality follows from \eqref{arachidi_bis}. 
 
 \smallskip
 
 On the other hand, we have
 \begin{eqnarray}
 \mathcal{E}_{\mu_\e}\left[\left(b_t-\bbE_{\eta}\left[b_t\right]\right)^2\right]\leq 2\|j_e\|_\infty \bbE_{\mu_\e}\left[\left|b_t-\bbE_{\eta}\left[b_t\right]\right|\right]= 2\|j_e\|_\infty  \mu_\e(f)\,,
\end{eqnarray} 
where $f(\eta):= \bbE_\eta\bigl[\bigl|b_t-\bbE_{\eta}\bigl[b_t\bigr]\bigr|\bigr]$.
Now, thanks to  \eqref{benedicte}, we can bound
\[ \mu_\e(f)  \leq \mu(f)+ \frac{\e}{\g-\e} \mu (f^2) ^\frac{1}{2} 
\leq \frac{ \g}{\g-\e}  \mu (f^2) ^\frac{1}{2} 
 \leq    \frac{\gamma}{\gamma-\epsilon}\mathcal{E}_{\mu}\Big[\Big(b_t-\bbE_{\eta}\Big[b_t\Big]\Big)^2\Big]^{1/2}\,.
\]
In particular, we conclude that 
\begin{eqnarray}\label{ova}
\mathcal{E}_{\mu_\e}\left[\left(b_t-\bbE_{\eta}\left[b_t\right]\right)^2\right]\leq  \frac{C}{\gamma-\epsilon} \mathcal{E}_{\mu}\left[\left(b_t-\bbE_{\eta}\left[b_t\right]\right)^2\right]^{1/2}.
\end{eqnarray}
Reasoning as in \eqref{spiedino} \verde{(now using directly \eqref{banana} instead of \eqref{arachidi_bis})} we get  that the square of the last factor in \eqref{ova} is bounded by $4\|j_e\|^2e^{-2\gamma t}$. In particular, \eqref{ova} can be refined to
\begin{eqnarray}\label{ovone}
\mathcal{E}_{\mu_\e}\left[\left(b_t-\bbE_{\eta}\left[b_t\right]\right)^2\right]\leq   \frac{C}{\gamma-\epsilon}   e^{-\gamma t}\,.
 \end{eqnarray}
As a byproduct of \eqref{ginnastica}, \eqref{spiedino} and \eqref{ovone} we get that the l.h.s. of  \eqref{autunno} is bounded from above by $  C e^{-(\g-\e)t}/(\g-\e)$.

\subsection{The l.h.s. of  \eqref{autunno} is bounded from above by $ o(1)$} We say that a quantity is $o(1)$ if it goes to zero as $\beta(\e)$ goes to zero.
 Let us write
\begin{equation}\label{mandarino}
\begin{split}
b^{(\e)}_t-b_{t}& = \left [ S_\e(t)j^{(\e)}_e\left(\eta^{(\e)}_1\right)-S(t)j^{(\e)}_{e}\left(\eta^{(\e)}_1\right)\right] +\left[  S(t)j^{(\e)}_e\left(\eta^{(\e)}_1\right)-S(t)j_e \left(\eta^{(\e)}_1\right) \right]\\
&+
\left[ S(t)j_e\left(\eta^{(\e)}_1\right)-S(t)j_e\left({\eta}_1\right)\right]
\end{split}
\end{equation}

Let us  deal with the first term in the r.h.s. We can bound
\verde{\begin{equation*}\label{nebbia}
\begin{split}
\cE_{\mu_\e} &\Big[\Big(S_\e(t)j^{(\e)}_e\big(\eta^{(\e)}_1\big)  -S(t)j^{(\e)}_{e}\big(\eta^{(\e)}_1\big)\Big)^2\Big]
 =\mu_\e\left(\left(S_\e(t)j^{(\e)}_e-S(t)j^{(\e)}_{e}\right)^2\right)\\
& \leq \|j^{(\e)}_e\|_\infty 
\mu_\e\Big( \,|S_\e(t)j^{(\e)}_e- S^{(0)}_\e(t)j^{(\e)}_e |\, \Big)\\ & 
 \leq C  \mu\Big( \,|  S_\e(t)j^{(\e)}_e-S^{(0)}_\e(t)j^{(\e)}_e |\, \Big) + C \e (\g-\e)^{-1} \|  S_\e(t)j^{(\e)}_e-S^{(0)}_\e(t)j^{(\e)}_e\|_\mu \\
 & \leq C' (\g-\e)^{-1}\|  S_\e(t)j^{(\e)}_e-S^{(0)}_\e(t)j^{(\e)}_e\|_\mu\leq C'' \e (\g-\e)^{-1}=o(1)\,.
\end{split}
\end{equation*}}
Indeed, the first identity follows from the  invariance of $(\eta^{(\e)}_t)_{t\geq 0}$ under $\mu_\e$, the second inequality follows from \eqref{benedicte} \verde{and 
\eqref{Sn}}, the third one from Schwarz inequality and the last one from \eqref{volare}with $k=1$. 

\smallskip

We move to the second term which is bounded in uniform norm by
$\| S(t)(j^{(\e)}_e- j_e) \|_\infty \leq \|j^{(\e)}_e- j_e \|_\infty\leq \b (\e)=o(1)$.
On the other hand, using that $\| S(t)j_e \|_\infty$ is uniformly bounded in $t$ and using \eqref{federico}, the $\cE_{  \mu_\e}$--second moment of the third term in the r.h.s. of \eqref{mandarino} can be estimated by $C \cP_\eta ( \eta_1 \not = \eta)\leq C\,c(\e)=o(1)$. 

\smallskip
As a byproduct of the above observations we conclude that
$\cE_{\mu_\e} \left[ \bigl(
b^{(\e)}_t-b_{t}\bigr)^2\right ]=o(1)$.
This also implies that 
\begin{equation*}
\begin{split}
\cE_{\mu_\e}
 \left[
\left( \bbE^{(\e)}_{\eta}\left[b^{(\e)}_t\right]- \bbE_{\eta}\bigl[b_t\bigr]\right)^2\right]& =\cE _{\mu_\e} \left[ \left( \cE_{\eta} [ b^{(\e)}_t - b_t] \right)^2 \right]\\
& \leq \cE _{\mu_\e} \left[   \cE_{\eta}\left [
 (b^{(\e)}_t - b_t)^2 \right]  \right]= \cE _{\mu_\e} \left[    
 (b^{(\e)}_t - b_t)^2   \right] =o(1)\,.
 \end{split}
\end{equation*}
By Schwarz inequality we then conclude that the l.h.s. of  \eqref{autunno} is bounded from above by $ o(1)$.

\subsection{Proof of Claim \ref{Claim2} }\label{dim_claim2}
Let $f_{T,n}(\eta):={\cE_{\eta}}\Big[\bigl(A_1+B_{T-n}\bigr)^2\Big]$. Then \eqref{AtoA0} 
reads
 $\mu_\e( f_{T,n} ) \geq \mu(f_{T,n} ) - C \e /(\g-\e)$. This follows from 
  \eqref{benedicte} if we prove that $\mu ( f_{T,n} ^2)$  is bounded from above uniformly in $T,n$. By Schwarz inequality, it is enough to bound from above ${\cE_{\mu}}\bigl[A_1 ^4\bigr]$ and ${\cE_{\mu}}\bigl[B_{T-n}^4\bigr]$ uniformly in $T,n$. This follows from Lemma \ref{spina}. 
   
 \subsection{Proof of Claim~\ref{Claim3}}\label{dim_claim3}   By standard techniques \cite{spohn,DFGW} we have the following variational characterization of  the diffusion coefficient of a symmetric walker in reversible environment:
\begin{eqnarray}
\langle e, D_0e \rangle =\frac{1}{2}\inf_f\Big\{-2\mu\left(f L_{\rm env}f\right)+\sum_{y\in \bbZ^d} \mu\left(r(y,\eta)\left[ y\cdot e+f(\tau_y\eta)-f(\eta)\right]^2\right)\Big\},\label{eq:varformula}
\end{eqnarray}
where the infimum is taken over local functions $f$ on $\Omega$ and where $e$ is any vector of the canonical basis.


In \eqref{eq:varformula}, by definition of the spectral gap, the first term is bounded from  below by $2\gamma {\rm Var}_\mu(f)$. On the other hand, using the inequality $(a+b)^2\geq \beta a^2-\frac{\beta}{1-\beta}b^2$ for $\beta<1$, we get
\begin{eqnarray*}
\mu\left(r(y,\eta)\left[ y\cdot e+f(\t_y \eta )-f(\eta)\right]^2\right)\qquad\qquad\qquad\qquad\qquad\qquad\\
\qquad\qquad\geq \beta \mu(r(y,\cdot))(y\cdot e)^2-\frac{\beta}{1-\beta}\mu\left(r(y,\eta)\left[f(\tau_y\eta)-f(\eta)\right]^2\right)\\
\qquad\qquad\qquad\qquad\geq \beta \mu(r(y,\cdot))(y\cdot e)^2-4\,\underset{\eta}{\sup}\, r(y,\eta)\frac{\beta}{1-\beta}Var_\mu(f).
\end{eqnarray*}

Injecting this in \eqref{eq:varformula} and choosing $\beta<1$ so that 
\[
2\gamma-4\frac{\beta}{1-\beta}\sum_{y\in\bbZ^d}\,\underset{\eta}{\sup}\, r(y,\eta) \leq 0,
\]
we get $\langle e, D_0e \rangle >0$. Hence, we conclude that (cf.\@ \cite[Eq. (2.43)]{DFGW})
\begin{equation}\label{Dzero}
\underset{T\to +\infty}{\lim}\frac{1}{T} {\cE_\mu}\Big[\bigl(X_T\cdot e\bigr)^2\Big]=\langle e, D_0e \rangle>0.
\end{equation}
\medskip

We claim that 
\begin{equation}\label{tortina}
\sup_{T\geq 0} 
\mu\left({\cE_{\eta}}\left[X_T\cdot e \right]^2\right)<+\infty
\end{equation}
For simplicity we  restrict the proof to $T$ integer (indeed, to our final aim this would be enough, anyway 
one could  extend the thesis to the general case). Due to the Markov property, we get 
\begin{equation}\label{40} 
{\cE_{\eta}}\Big[X_T\cdot e \Big]  =
{\cE_{\eta}} \Big[  \sum_{k=0}^{T-1}   ( X_{k+1} \cdot e- X_{k}\cdot e) \Big]  = 
{\cE_\eta}  \Big[  \sum_{k=0}^{T-1}{  \cE_{\eta_k}}   ( X_{1}\cdot e ) \Big] = \sum_{k=0}^{T-1}{ \cE_\eta} \Big[{ \cE_{\eta_k}}   ( X_{1}\cdot e ) \Big]\,.
\end{equation}
Consider now the function {$f(\eta)=\cE_\eta   ( X_{1}\cdot e )$.}

 Since $S(t) f(\eta)= { \cE_\eta \Big[  \cE_{\eta_t}   ( X_{1}\cdot e ) \Big]}$, from \eqref{40} we get that
\[ {\cE_{\eta}}\Big[X_T\cdot e \Big] =  \sum_{k=0}^{T-1} S(k) f (\eta)\,.\]
Note that $\mu(f)=0$  by reversibility and that $f \in L^2 (\mu)$  by Lemma \ref{genova} adapted to the unperturbed process and by condition \eqref{nthMoment}. By the Poincar\'e inequality \eqref{banana} we conclude that
$\| S(t) f\| \leq e^{-\g t} \|f\| $.
At this point we have
\begin{equation}
\begin{split}
\mu\Big(\bbE_{\eta}\Big[X_T\cdot e \Big]^2\Big)&= \mu \Big[ \Big(  \sum_{k=0}^{T-1} S(k) f (\eta)
\Big)^2 \Big]= \|    \sum_{k=0}^{T-1} S(k) f \|^2 \leq \Big(       \sum_{k=0}^{T-1} \| S(k) f \| \Big)^2\\
& \leq  \|f\|^2 \Big(       \sum_{k=0}^{T-1} e^{-\g k }  \Big)^2 \leq \frac{ \|f\|^2  }{1-e^{-\g} }\,,
\end{split}
\end{equation}
thus concluding the proof of  \eqref{tortina}. Trivially, Claim \ref{Claim3} follows as a byproduct of \eqref{Dzero} and \eqref{tortina}. 

\appendix
\section{Miscellanea}\label{app_misc}

{
Lemma \ref{miele} and  Lemma \ref{SC}   below have a standard derivation and therefore we omit their proof. Detailed proofs can be found in \cite[Appendix A]{ABFv2}.}
\begin{Lemma}\label{miele} Let $\O$ be  a metric space  and let $\nu$ be   a Borel probability measure on $\O$. Then:
\begin{itemize}
\item[(i)] The subset $C_b(\O)$ of  bounded continuous functions $f: \O\to \bbR$ is dense in $L^2(\nu)$.
\item[(ii)] Let $h$ be a function in $L^2(\nu)$ such that  $\nu(h f) \geq 0$ for any $f \in C_{b,+}(\O):=\{ g \in C_{b}(\O): g\geq 0\}$.
Then, $h \geq 0$ $\nu$--a.s..
\end{itemize}
\end{Lemma}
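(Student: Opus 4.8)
\emph{Part (i).} The plan is to reduce to approximating indicator functions and then invoke regularity of finite Borel measures on metric spaces. Since the simple functions are dense in $L^2(\nu)$, it suffices to show that $\mathds{1}_A$ lies in the $L^2(\nu)$--closure of $C_b(\O)$ for every Borel set $A\subset\O$. Fix $\varepsilon>0$. As $\nu$ is a finite Borel measure on a metric space, it is regular, so there exist a closed set $F$ and an open set $U$ with $F\subset A\subset U$ and $\nu(U\setminus F)<\varepsilon$ (the degenerate cases $F=\emptyset$ or $U=\O$ are handled by a trivial separate argument, approximating $\mathds{1}_A$ by a constant). Taking $F,U^c$ disjoint nonempty closed sets, the function
\[
f(x):=\frac{d(x,U^c)}{d(x,U^c)+d(x,F)}
\]
is continuous, takes values in $[0,1]$, equals $1$ on $F$ and $0$ on $U^c$; hence $|f-\mathds{1}_A|\le \mathds{1}_{U\setminus F}$ pointwise and $\|f-\mathds{1}_A\|_{L^2(\nu)}^2\le\nu(U\setminus F)<\varepsilon$. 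Letting $\varepsilon\to0$ gives (i).

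\emph{Part (ii).} Set $A:=\{h<0\}$, a Borel set; the goal is $\nu(A)=0$. By (i) choose $f_n\in C_b(\O)$ with $f_n\to\mathds{1}_A$ in $L^2(\nu)$; replacing $f_n$ by $\min(\max(f_n,0),1)$, which is a $1$--Lipschitz function of $f_n$ leaving $\mathds{1}_A$ unchanged pointwise, we may assume $f_n\in C_{b,+}(\O)$, and the $L^2(\nu)$--convergence is preserved. By hypothesis $\nu(hf_n)\ge0$ for all $n$, while Cauchy--Schwarz gives $|\nu(h(f_n-\mathds{1}_A))|\le\|h\|_{L^2(\nu)}\|f_n-\mathds{1}_A\|_{L^2(\nu)}\to0$, so $\nu(h\mathds{1}_A)=\lim_n\nu(hf_n)\ge0$. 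On the other hand $\nu(h\mathds{1}_A)=\int_A h\,d\nu\le0$, with equality forcing $\nu(A)=0$ since $h<0$ strictly on $A$. Hence $\nu(A)=0$, i.e.\ $h\ge0$ $\nu$--a.s.

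There is no real obstacle here: the only input beyond elementary measure theory is the regularity of finite Borel measures on a metric space together with the Urysohn-type function displayed above, and the mild care needed for the degenerate cases $F=\emptyset$, $U=\O$. The step most worth writing carefully is the reduction in (ii) to $f_n\in C_{b,+}(\O)$ with convergence retained, so that the sign hypothesis $\nu(hf_n)\ge0$ can actually be applied in the limit.
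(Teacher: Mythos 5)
Your proof is correct; the paper itself omits the argument, noting only that the lemma "has a standard derivation" with details deferred to \cite[Appendix A]{ABFv2}, and your route (inner/outer regularity of finite Borel measures on metric spaces plus a Urysohn-type function for (i), then truncation to $C_{b,+}(\O)$ and Cauchy--Schwarz on $A=\{h<0\}$ for (ii)) is exactly that standard derivation. Both degenerate cases and the reduction to nonnegative approximants are handled properly.
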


\begin{Lemma}\label{SC} The semigroup $S(t)$, $t \in \bbR_+$, defined at the beginning of Section \ref{viva_segovia} is strongly continuous.
\end{Lemma}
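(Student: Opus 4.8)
The plan is to establish the two ingredients of strong continuity of $(S(t))_{t\ge0}$ on $L^2(\mu)$: first, that these operators form a uniformly bounded (indeed contraction) family, and second, that $\|S(t)f-f\|\to0$ as $t\downarrow0$ for every $f\in L^2(\mu)$. Combined with the semigroup property $S(t+s)=S(t)S(s)$, which is a direct consequence of the Markov property of the underlying process, these two facts give continuity of $t\mapsto S(t)f$ on all of $\bbR_+$.

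For the contraction bound I would apply the conditional Jensen inequality: for $f\in L^2(\mu)$ and $t\ge0$, $(S(t)f)(\eta)^2=\bigl(\bbE_\eta[f(\eta_t)]\bigr)^2\le\bbE_\eta[f(\eta_t)^2]$; integrating with respect to $\mu$ and using that $\mu$ is invariant for the process yields $\|S(t)f\|^2\le\mu(f^2)=\|f\|^2$. In particular $\sup_{t\ge0}\|S(t)\|\le1$.

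Because the operators $S(t)$ are uniformly bounded, it suffices to verify $\|S(t)f-f\|\to0$ as $t\downarrow0$ for $f$ ranging over a dense subset of $L^2(\mu)$: given a general $f$ and $\delta>0$, choosing $g$ in the dense set with $\|f-g\|<\delta$ gives $\|S(t)f-f\|\le\|S(t)(f-g)\|+\|S(t)g-g\|+\|g-f\|\le2\delta+\|S(t)g-g\|$, so $\limsup_{t\downarrow0}\|S(t)f-f\|\le2\delta$, and $\delta$ is arbitrary. By Lemma~\ref{miele}(i) the space $C_b(\O)$ of bounded continuous functions is dense in $L^2(\mu)$, so I may take $f\in C_b(\O)$. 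Fix such an $f$. Since the process has c\`adl\`ag paths, $\eta_s\to\eta_0=\eta$ as $s\downarrow0$, $\bbP_\eta$-a.s., hence $f(\eta_s)\to f(\eta)$ $\bbP_\eta$-a.s.\ by continuity of $f$; as $|f(\eta_s)|\le\|f\|_\infty$, dominated convergence gives $(S(t)f)(\eta)=\bbE_\eta[f(\eta_t)]\to f(\eta)$ for every $\eta\in\O$. Then $|(S(t)f)(\eta)-f(\eta)|^2\le4\|f\|_\infty^2$, which is $\mu$-integrable and converges pointwise to $0$, so a second application of dominated convergence (now with respect to $\mu$) gives $\|S(t)f-f\|^2=\int_\O|S(t)f-f|^2\,d\mu\to0$ as $t\downarrow0$.

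It remains to upgrade strong continuity at $0$ to continuity on $\bbR_+$: for $t_0>0$ and $h>0$ one has $\|S(t_0+h)f-S(t_0)f\|=\|S(t_0)\bigl(S(h)f-f\bigr)\|\le\|S(h)f-f\|\to0$, and likewise $\|S(t_0)f-S(t_0-h)f\|=\|S(t_0-h)\bigl(S(h)f-f\bigr)\|\le\|S(h)f-f\|\to0$, using the contraction bound. This proves that $t\mapsto S(t)f$ is continuous for every $f\in L^2(\mu)$, i.e.\ the semigroup is strongly continuous. I do not expect a real obstacle; the only points requiring a little care are the measurability of $\eta\mapsto(S(t)f)(\eta)$, which is part of the standing assumption that $(\bbP_\eta)_{\eta\in\O}$ is a measurable family, and the passage, via c\`adl\`ag regularity, from right-continuity of $s\mapsto\eta_s$ to the pointwise convergence $f(\eta_s)\to f(\eta)$.
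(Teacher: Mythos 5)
Your proof is correct and is precisely the standard derivation the paper has in mind: the paper omits the proof of Lemma \ref{SC}, referring to \cite[Appendix A]{ABFv2}, and your argument (contraction via Jensen and invariance of $\mu$, density of $C_b(\O)$ from Lemma \ref{miele}(i), dominated convergence using c\`adl\`ag paths, and the standard upgrade from continuity at $0$) is exactly that routine argument.
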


\begin{Lemma}\label{equitalia} In the same setting of Section \ref{viva_segovia}, 
given a positive constant $\g>0$, \eqref{poincare} is equivalent to \eqref{banana}.
\end{Lemma}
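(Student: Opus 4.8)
The plan is to prove the two implications separately, exploiting that $S(t)$ is a strongly continuous contraction semigroup on $L^2(\mu)$ (Lemma \ref{SC}) with generator $L$, and that $\mu(S(t)f)=\mu(f)$ for all $f$ (stationarity of $\mu$). Since both statements are unchanged by replacing $f$ with $f-\mu(f)$, I would reduce at the outset to the mean-zero subspace $L^2_0(\mu):=\{f\in L^2(\mu):\mu(f)=0\}$, which is invariant under $S(t)$; write $\|\cdot\|$ for the $L^2(\mu)$-norm.

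\textbf{From \eqref{poincare} to \eqref{banana}.} First I would take $f\in\cD(L)\cap L^2_0(\mu)$ and consider $\varphi(t):=\|S(t)f\|^2$. Since $t\mapsto S(t)f$ is differentiable in $L^2(\mu)$ with derivative $LS(t)f$ (for $f$ in the domain, $S(t)f\in\cD(L)$ too), I compute $\varphi'(t)=2\,\mu\big((S(t)f)\,L S(t)f\big)$. Applying \eqref{poincare} to $S(t)f\in\cD(L)$ (which has zero mean) gives $\varphi'(t)\le -2\g\,\varphi(t)$, hence by Grönwall $\varphi(t)\le e^{-2\g t}\varphi(0)$, i.e. $\|S(t)f\|\le e^{-\g t}\|f\|$. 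Then I would extend from the dense domain $\cD(L)$ to all of $L^2_0(\mu)$ by continuity: given arbitrary $f\in L^2_0(\mu)$, pick $f_n\in\cD(L)$ with $f_n\to f$; note $\mu(f_n)\to 0$ so WLOG $f_n\in\cD(L)\cap L^2_0$ after subtracting means, pass to the limit using that each $S(t)$ is bounded. Re-adding the mean recovers \eqref{banana} for general $f$.

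\textbf{From \eqref{banana} to \eqref{poincare}.} For $f\in\cD(L)$ with $\mu(f)=0$, I would start from the identity $\mu\big(f\,\tfrac{S(t)f-f}{t}\big)=\tfrac1t\big(\mu(fS(t)f)-\|f\|^2\big)$, whose limit as $t\downarrow 0$ is $\mu(fLf)$ by definition of the generator and continuity of the inner product. Now Cauchy–Schwarz together with \eqref{banana} gives $\mu(fS(t)f)\le\|f\|\,\|S(t)f\|\le e^{-\g t}\|f\|^2$, so $\mu(fS(t)f)-\|f\|^2\le(e^{-\g t}-1)\|f\|^2$. Dividing by $t>0$ and letting $t\downarrow 0$ yields $\mu(fLf)\le -\g\|f\|^2$, which is \eqref{poincare}.

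\textbf{Main obstacle.} The only genuinely delicate points are the differentiation step $\varphi'(t)=2\mu((S(t)f)LS(t)f)$ — which requires $f\in\cD(L)$ and the standard fact that then $S(t)f\in\cD(L)$ with $\frac{d}{dt}S(t)f=LS(t)f$ in $L^2(\mu)$, so that the product rule applies to the (real) inner product — and the density argument extending the contraction estimate off the domain; both are routine for $C_0$-semigroups. I would flag that these are the only places care is needed and otherwise keep the argument short, as the paper itself defers the proof to an appendix reference.
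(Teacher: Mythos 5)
Your proposal is correct and follows essentially the same route as the paper: for \eqref{poincare}$\Rightarrow$\eqref{banana} both differentiate $\|S(t)f\|^2$, apply the Poincar\'e inequality to $S(t)f$ (mean zero by stationarity), and conclude by Gronwall plus density of $\cD(L)$. For the converse the paper Taylor-expands $\|S(t)f\|^2=\|f\|^2-2t\langle f,-Lf\rangle+o(t)$ and compares with $e^{-2\g t}\|f\|^2$, while you linearize via Cauchy--Schwarz on $\mu(fS(t)f)$ and the difference quotient defining $L$ — a minor, equally valid variation of the same first-order-at-$t=0$ argument.
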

The above lemma is usually proven in the reversible case. We give the proof to stress that it holds even without reversibility. 
\begin{proof} 
For any $f \in \cD(L) $ the map $[0,+\infty) \ni t \to S(t) f \in L^2 (\mu)$ is $C^1$, $S(t) f \in \cD (L)$ and  $ \frac{d}{dt} S(t) f = L S(t) f$ \cite[Chap. II, Sec. 1]{EN}. In particular, taking $f \in \cD(L) $,  by differentiating one gets 
\begin{equation}\label{diffi}
 \frac{d}{dt} \| S(t) f \|^2 =\langle   L S(t) f ,  S(t) f \rangle +\langle   S(t) f , L S(t) f \rangle=
2  \langle   S(t) f , L S(t) f \rangle\,,
\end{equation}
where $\langle \cdot , \cdot \rangle $ denotes the scalar product in $L^2(\mu)$ (note that we have used the symmetry of the scalar product: $\langle g,g'\rangle= \langle g', g\rangle$).

We first assume   Poincar\'e inequality \eqref{poincare} to be satisfied and take  $f \in \cD(L) $  with $\mu(f)=0$. By \eqref{diffi} 
and  the Poincar\'e inequality, one gets 
\[  \frac{d}{dt} \| S(t) f \|^2  = 2  \langle   S(t) f , L S(t) f \rangle \leq -2 \g  \| S(t) f \|^2  \,.\]
 Note that we have used the stationarity of $\mu$, implying that $\mu(S(t)f)= \mu(f)=0$.  Gronwall inequality then leads to $ \| S(t) f \| \leq e^{-\l t} \|f\|$.  In particular, \eqref{banana} holds for any  $f \in \cD(L)$ with $\mu(f)=0$, and therefore for any $f \in \cD(L)$ (observe  that constant functions are left invariant by  $S(t)$). By density of $\cD(L)$ in $L^2(\mu)$ one gets \eqref{banana} for any $f \in L^2(\mu)$.

We now assume \eqref{banana} to be satisfied and fix  $f \in \cD(L) $  with $\mu(f)=0$.  By \eqref{diffi} we have $\| S(t)f \|^2 = \|f\|^2 -  2 t  \langle   f ,- L  f \rangle+o(t)$ as $t \downarrow 0$. On the other hand, $e^{- 2 \g t} \| f \|^2 = \|f\|^2 - 2 \g \|f\|^2 +o(t)$ as $t \downarrow 0$. Hence the Taylor expansion of \eqref{banana} implies \eqref{poincare}.
\end{proof}
The following lemma extends the probabilistic interpretation of the semigroup $S_\e(t)$ given in \eqref{ghiacciolo}.
\begin{Lemma}\label{candelina}
Consider the same assumptions of Theorem \ref{teo_invariante}. Then, given $f \in L^2(\mu)$, it holds
$ S_\e (t) f (\eta)= \bbE^{(\e)}_\eta \bigl( f(\eta_t) \bigr) $   $ \mu_\e\text{--a.s.}$\end{Lemma}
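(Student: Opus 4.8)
The plan is to extend the identity $S_\e(t)f(\eta) = \bbE^{(\e)}_\eta(f(\eta_t))$ from $f \in C_b(\O)$, where it holds $\mu$--a.s.\@ by Assumption \ref{ferragosto}, to arbitrary $f \in L^2(\mu)$, where it holds $\mu_\e$--a.s.\@ (a weaker conclusion, appropriate since $\mu_\e \ll \mu$ may fail to be an equivalence). The natural route is a density/approximation argument: pick $f \in L^2(\mu)$, choose $f_n \in C_b(\O)$ with $\|f_n - f\|_{L^2(\mu)} \to 0$ (possible by Lemma \ref{miele}-(i)), and pass to the limit in the known identity $S_\e(t) f_n(\eta) = \bbE^{(\e)}_\eta(f_n(\eta_t))$. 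The left-hand side converges to $S_\e(t)f$ in $L^2(\mu)$, hence in $L^2(\mu_\e)$ since $\mu_\e \ll \mu$ with $h_\e = d\mu_\e/d\mu \in L^2(\mu)$: indeed $\|S_\e(t)(f_n - f)\|_\e^2 = \mu(h_\e |S_\e(t)(f_n-f)|^2) \le \|h_\e\| \, \|S_\e(t)(f_n-f)\|^2_{L^2(\mu)} \cdot$(a bound via Schwarz on the square, or more simply one bounds $\|S_\e(t)g\|_\e$ by $\|g\|$ using that $S_\e(t)$ is a contraction on $L^2(\mu_\e)$ through the semigroup $\cS_\e(t)$ of Section \ref{limiti_perturbato}). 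So along a subsequence the left-hand side converges $\mu_\e$--a.s.

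For the right-hand side, the point is that $\eta \mapsto \bbE^{(\e)}_\eta(g(\eta_t)) = \cS_\e(t)g(\eta)$ defines a contraction on $L^2(\mu_\e)$ by stationarity of $\mu_\e$ for the perturbed process (Theorem \ref{teo_invariante}-(iii), or directly Lemma \ref{SC} applied with $\mu_\e$ in place of $\mu$). Hence $\bbE^{(\e)}_\cdot(f_n(\eta_t)) \to \bbE^{(\e)}_\cdot(f(\eta_t))$ in $L^2(\mu_\e)$, and along a further subsequence $\mu_\e$--a.s. Since the two sides of $S_\e(t)f_n(\eta) = \bbE^{(\e)}_\eta(f_n(\eta_t))$ agree $\mu$--a.s.\@ and therefore $\mu_\e$--a.s.\@ for each $n$, passing to the limit along the common subsequence yields $S_\e(t)f(\eta) = \bbE^{(\e)}_\eta(f(\eta_t))$ $\mu_\e$--a.s.

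The only mild subtlety — and the place to be careful — is the interplay between the two Hilbert space structures $L^2(\mu)$ and $L^2(\mu_\e)$: the operators $S_\e(t)$ (living in $L^2(\mu)$) and $\cS_\e(t)$ (living in $L^2(\mu_\e)$) are a priori distinct, but they coincide $\mu_\e$--a.s.\@ on $C_b(\O)$ by \eqref{ghiacciolo} together with $\mu_\e \ll \mu$, as already observed at the start of Section \ref{limiti_perturbato}. Thus it is cleanest to first establish $\cS_\e(t)f = S_\e(t)f$ in $L^2(\mu_\e)$ for all $f \in L^2(\mu)$ (both sides are bounded operators from $L^2(\mu)$ into $L^2(\mu_\e)$ agreeing on the dense subspace $C_b(\O)$, using the contraction bounds above to ensure boundedness), and then invoke the probabilistic interpretation $\cS_\e(t)f(\eta) = \bbE^{(\e)}_\eta(f(\eta_t))$ which holds $\mu_\e$--a.s.\@ for all $f \in L^2(\mu_\e) \supseteq$ the image of $L^2(\mu)$, directly from the definition of $\cS_\e(t)$ as the $L^2(\mu_\e)$ semigroup of the perturbed process. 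No serious obstacle is expected; the argument is essentially bookkeeping about which null sets are at play.
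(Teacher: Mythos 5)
Your overall strategy is the same as the paper's (approximate $f$ by $f_n\in C_b(\O)$ via Lemma \ref{miele}, pass to the limit in \eqref{ghiacciolo} along subsequences, and use $\mu_\e\ll\mu$), but the way you carry out the limit contains a step that fails. The problem is the repeated implicit use of a continuous embedding $L^2(\mu)\hookrightarrow L^2(\mu_\e)$: since $h_\e=d\mu_\e/d\mu$ is only known to lie in $L^2(\mu)$ (not $L^\infty(\mu)$), convergence $f_n\to f$ in $L^2(\mu)$ does \emph{not} give convergence in $L^2(\mu_\e)$. Concretely, your Schwarz bound $\mu(h_\e\,|g|^2)\le \|h_\e\|\,\|g\|^2$ is incorrect: Cauchy--Schwarz gives $\mu(h_\e |g|^2)\le \|h_\e\|\,\|g^2\|=\|h_\e\|\,\|g\|_{L^4(\mu)}^2$, which is not controlled by the $L^2(\mu)$ norm. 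The same issue undermines the final paragraph: neither $S_\e(t)$ nor $\cS_\e(t)$ is known to be bounded \emph{from} $L^2(\mu)$ \emph{into} $L^2(\mu_\e)$, so the ``two bounded operators agreeing on a dense subspace'' argument is not available. In particular the claim that $\bbE^{(\e)}_\cdot(f_n(\eta_t))\to\bbE^{(\e)}_\cdot(f(\eta_t))$ in $L^2(\mu_\e)$, which is what you use to extract the a.s.\ convergent subsequence on the right-hand side, is unjustified.

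The repair is small and is what the paper does. For the left-hand side no $\mu_\e$--norm is needed at all: $S_\e(t)f_n\to S_\e(t)f$ in $L^2(\mu)$, so along a subsequence the convergence holds $\mu$--a.s., hence $\mu_\e$--a.s.\ by absolute continuity. For the right-hand side, work in $L^1(\mu_\e)$ rather than $L^2(\mu_\e)$: by stationarity of $\mu_\e$ and Cauchy--Schwarz in $L^2(\mu)$,
\begin{equation*}
\mu_\e\Bigl[\bigl|\bbE^{(\e)}_\eta(f_n(\eta_t))-\bbE^{(\e)}_\eta(f(\eta_t))\bigr|\Bigr]\le \bbE^{(\e)}_{\mu_\e}\bigl[|f_n(\eta_t)-f(\eta_t)|\bigr]=\mu\bigl(h_\e\,|f_n-f|\bigr)\le \|h_\e\|\,\|f_n-f\|\to 0\,,
\end{equation*}
which does follow from $h_\e\in L^2(\mu)$ and $L^2(\mu)$ convergence. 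Extracting a further a.s.\ convergent subsequence in $L^1(\mu_\e)$ then closes the argument exactly as you intend.
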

\begin{proof}
By Lemma \ref{miele} there exists a sequence $(f_n)_{n \geq 1}$ in $C_b(\O)$ with $\| f_n - f\| \to 0$ as $n \to \infty$. Since $S_\e(t)$ is a bounded operator in $L^2(\mu)$, we get that    $\| S_\e(t) f_n - S_\e(t) f\|\to 0$ as $n \to \infty$. In particular, at cost to extract a subsequence, we have $S_\e (t) f_n (\eta) \to S_\e (t) f (\eta)$ for $\mu$--a.e.\@ $\eta$. Since $\mu_\e \ll \mu$ (by Theorem \ref{teo_invariante}), we conclude that  
\begin{equation}\label{lirica1}
S_\e (t) f_n (\eta) \to S_\e (t) f (\eta) \text{  for $\mu_\e$--a.e.\@ $\eta$}\,.
\end{equation}
On the other hand, by the stationarity of $\mu_\e$ for the perturbed dynamics, we have
\begin{multline*}
\mu_\e \Big[\Big| \bbE^{(\e)}_\eta \bigl( f_n(\eta_t) \bigr)-\bbE^{(\e)}_\eta \bigl( f(\eta_t) \bigr)
\Big|\Big]  \leq \mu_\e  \left[  \bbE^{(\e)}_\eta \left(\left| f_n(\eta_t) -f(\eta_t) \right|\right)\right] =
\bbE^{(\e)} _{\mu_\e} [|f_n(\eta_t)-f(\eta_t)| ] \\
= \mu_\e [|f_n-f|)= \mu \left[\frac{d \mu_\e}{d \mu} |f_n-f|\right] \leq 
\| \frac{d \mu_\e}{d \mu} \| \cdot \| f_n -f\| \to 0 \,. 
\end{multline*}
We have shown that the map $\bbE^{(\e)}_\cdot \bigl( f_n(\eta_t) \bigr)$ converges to the map
$\bbE^{(\e)}_\cdot \bigl( f(\eta_t) \bigr)$ in $L^1(\mu_\e)$. Hence, at cost to extract a subsequence, 
the convergence holds also $\mu_\e$--a.s.. The thesis is then a byproduct of the last observation, of \eqref{lirica1} and the identity \eqref{ghiacciolo} applied to $f_n\in C_b (\O)$ instead of $f$ (which holds $\mu$--a.s.\@ and therefore $\mu_\e$--a.s.\@ since $\mu_\e \ll \mu$).
\end{proof}
 
%

\bigskip

{\bf Acknowledgements}. The authors  thank L. Bertini, M. Mariani, S. Olla and P. Mathieu for useful discussions. O. Blondel and L. Avena acknowledge the Department of Mathematics of the University La Sapienza in Rome for the kind hospitality. O. Blondel and A. Faggionato thank the organizers of the Workshop  ``Random Motion in Random Media"  (Eurandom, The Netherlands), during which part of this work has been completed.
{L. Avena has been supported by NWO Gravitation Grant 024.002.003-NETWORKS.}


\end{document}